\documentclass[11pt]{amsart}
\usepackage{amssymb,amsmath}
%\usepackage{showkeys}
%\oddsidemargin=-0.0cm
%\evensidemargin=-0.0cm
%\textwidth=16cm
%\textheight=23cm \topmargin=-.3cm

%\usepackage{showkeys}

%%%%%%%%%%%%%%%%%%%%%%%%%%%%%%%%%%%%%%%%%%%%%%%%%%%%%%%%%%%%%%%%%%%%%%
% DEFS
\newcommand{\dist}{{\rm dist}}
\def\sgn{\operatorname{sgn}}

\def\Bbb{\mathbb}
\def\Cal{\mathcal}
\def\divv{\operatorname{div}}
\def\Dt{\partial_t}

\def\eb{\varepsilon}

\def\R {\mathbb{R}}

\def\<{\left<}
\def\>{\right>}

\def\Nx{\nabla_x}
\def\Dx{\Delta_x}
\def\({\left(}
\def\){\right)}
%%%%%%%%%%%%%%%%%%%%%%%%%%%%%%%%%%%%%%%%%%%%%%%%%%%%%%%%%%%%%%%%%%%%%%

%%%%%%%%%%%%%%%%%%%%%%%%%%%%%%%%%%%%%%%%%%%%%%%%%%%%%%%%%%%%%%%%%%%%%%
\newtheorem{proposition}{Proposition}[section]
\newtheorem{theorem}[proposition]{Theorem}
\newtheorem{corollary}[proposition]{Corollary}
\newtheorem{lemma}[proposition]{Lemma}

\theoremstyle{definition}
\newtheorem{definition}[proposition]{Definition}

\newtheorem{remark}[proposition]{Remark}

\numberwithin{equation}{section}
%%%%%%%%%%%%%%%%%%%%%%%%%%%%%%%%%%%%%%%%%%%%%%%%%%%%%%%%%%%%%%%%%%%%%%

%%%%%%%%%%%%%%%%%%%%%%%%%%%%%%%%%%%%%%%%%%%%%%%%%%%%%%%%%%%%%%%%%%%%%%
% BIBLIOGRAPHY

\def \no#1#2#3 {{\bf #1} (#3), #2.}
%\no{Vol}{Pag}{Year}
\def \eds#1#2#3 {#1, #2, #3.}
%\eds{Pub}{City}{Year}
%%%%%%%%%%%%%%%%%%%%%%%%%%%%%%%%%%%%%%%%%%%%%%%%%%%%%%%%%%%%%%%%%%%%%%

\title[Reaction diffusion equations] {Reaction-diffusion systems with supercritical nonlinearities  revisited}
\author[A. Kostianko, C. Sun and   S. Zelik]{ Anna Kostianko${}^{1,2}$, Chunyou Sun${}^2$
 and Sergey Zelik${}^{1,2}$}
\address{${}^1$
University of Surrey, Department of Mathematics,
Guildford, GU2 7XH, United Kingdom.}

\address{${}^2$ \phantom{e}School of Mathematics and Statistics, Lanzhou University, Lanzhou  \\ 730000,
P.R. China}
\email{anna.kostianko@surrey.ac.uk}
\email{s.zelik@surrey.ac.uk}
\email{sunchy@lzu.edu.cn}
\subjclass[2010]{35B40, 35B45,35K10}
\keywords{Reaction-diffusion system, monotone operators, convexity, long-time behavior, attractors,
 exponential attractors }
\begin{document}
\begin{abstract} We give a comprehensive study of  the analytic properties and long-time behavior
 of solutions of a reaction-diffusion system in a bounded domain in the case where the nonlinearity
  satisfies the standard monotonicity assumption. We pay the main attention to the supercritical case,
   where the nonlinearity is not subordinated to the linear part of the equation trying to put as small
    as possible amount of extra restrictions on this nonlinearity. The properties of such systems in the
     supercritical case may be  very different in comparison with the standard case of subordinated
      nonlinearities. We examine the global existence
     and uniqueness of weak and strong solutions, various types of smoothing properties,
     asymptotic compactness and the existence of global and exponential attractors.
\end{abstract}
\thanks{This work is partially supported by  the RSF grant   19-71-30004  as well as  the EPSRC
grant EP/P024920/1 and NSFC grants No. 11471148, 11522109, 11871169. }
\maketitle
\tableofcontents
\section{Introduction}\label{s0}
We study the following reaction-diffusion system in a bounded domain
 $\Omega\subset\R^d$ with smooth boundary:
%$$
\begin{equation}\label{0.rds}
\Dt u=a\Dx u-f(u)+g,\ \ u\big|_{t=0}=u_0
\end{equation}
%$$
endowed with the Dirichlet boundary conditions. Here $u=(u_1,\cdots,u_k)$ is an unknown
 vector-valued function, $a$ is a given diffusion matrix and $f(u)$ and $g$ are given nonlinearity
  and external forces respectively.
\par
Equations of the form \eqref{0.rds} model various classical phenomena in modern science
 (e.g., heat conduction, chemical kinetics, various quantum effects (Ginzburg-Landau equations),
  mathematical biology (Fitz-Hugh-Nagumo or Keller-Segel equations), etc.) and have been intensively studied from both
   mathematical and applied points of view, see \cite{BV,cv,CD19,DIL09,hale,lad,MP91,Rob,tem} and references therein. In a sense,
   this is the most studied and somehow simplest model example of an evolutionary PDE which may
   generate non-trivial dynamics.
\par
Since the analytic properties of the {\it linear} system \eqref{0.rds} are completely understood,
the analogous properties for the nonlinear equation depend strongly on whether or not we are able
 to treat the term $f(u)$ as a perturbation. As usual, if we want to have global existence of a solution,
  we need to find the proper {\it a priori estimates}, usually with the help of energy functionals or
   some "wisely" chosen Lyapunov type functionals. This, in turn, requires some restrictions on the
    function $f$ and matrix $a$ (to prevent the finite-time blow up of solutions). Then, if the found a
     priori estimates are strong enough to treat the nonlinearity as a perturbation
      (the so-called {\it subcritical} case), the analytic properties of the nonlinear equation is usually
       the same as for the dominating linear one and more or less complete theory is available. In contrast
        to this, in the {\it supercritical} case, the nonlinearity is strong enough to destroy the nice
        properties of the underlying linear equation, for instance, to produce the finite-time blow
         up of initially smooth solutions (despite the fact that the "energy" remains bounded
          and dissipative, see \cite{Bud} for such a phenomena in complex Ginzburg-Landau
           equation, \cite{P00} for chemical kinetics equations or \cite{HV97} for chemotaxis models). Usually,
            the sub/super criticality of the
            considered equation is determined by the {\it growth rate} of the nonlinearity $f(u)$ which
             depends on a priori estimates available (through the choice of the phase space for the problem)
             and the space dimension (through Sobolev embedding theorems). Thus, the typical picture
              for equation \eqref{0.rds} is the following: we have the so-called critical growth exponent
              $p=p_{crit}>1$  and an extra condition
              %$$
              \begin{equation}\label{0.crit}
              |f(u)|\le C(1+|u|^p),\ u\in\R^k
              \end{equation}
              %$$
on the nonlinearity and
the equation is subcritical if $p<p_{crit}$, critical if $p=p_{crit}$ and
 supercritical if $p>p_{crit}$, see \cite{BV,cv,Rob,tem} for more details.
 \par
Unfortunately, the universal conditions on $f$ and $a$ which would allow to avoid the finite-time blow up
and give the dissipativity in nice phase spaces are known in the scalar case $k=1$ only, so many
 different classes of sufficient conditions are suggested for the case of systems strongly depending
 on the area of science where the considered system comes from. For instance, from the point of view
  of chemical kinetics, it is natural to assume that $a$ is diagonal with non-negative entries and $f(u)$ satisfies
   the balance law
   %$$
   \begin{equation}\label{0.bal}
   \sum_{i=1}^k f_i(u)\le 0
   \end{equation}
   %$$
   which mimics the acting mass law for the concentrations $u_i$ of reagents (which usually belong to the
   non-negative cone in $\R^k$). The natural energy here is the $L^1$-norm of the solution $u(t)$ (the total
    mass is conserved or at least non-increasing), see \cite{MP91,P10} and the references therein for more details.
     We note  that in the supercritical case the solutions may blow up in finite time despite
      the conservation of total mass, see \cite{P00}.
  \par
 Clearly, assumptions \eqref{0.bal} are not appropriate for many other types of equations of the form
  \eqref{0.rds}, for instance, for complex Ginzburg Landau or Fitz-Hugh-Nagumo equations, so other types
   of assumptions should be used instead. The most widespread (especially in the
    literature related with the attractor theory, see \cite{BV,cv,Rob,tem}) is the following
     {\it dissipativity} condition:
    %$$
    \begin{equation}\label{0.dis}
      f(u).u\ge -C,\ \ u\in\R^k
    \end{equation}
    %$$
which is usually accompanied by the assumption that $a$ has a positive symmetric part. These assumptions
 are related with the so-called $L^2$-energy identity
%$$
\begin{equation}\label{0.l2}
\frac12\frac d{dt}\|u(t)\|^2_{L^2}+(a\Nx u,\Nx u(t))+(f(u(t)),u(t))=(g,u(t))
\end{equation}
%$$
which can be formally obtained by multiplying equation \eqref{0.rds} by $u$ and integrating
 over $x$ and which gives (due to these assumptions) the dissipative control of the
  $L^2$-norm of $u(t)$, see Lemma \ref{Lem1.l2}. However, the critical exponent which corresponds
   to this energy control (and the choice $H=L^2(\Omega)$ as a phase space):
   $$
   p_{crit}:=1+\frac4d
   $$
is rather restrictive (the most natural cubic nonlinearity is supercritical in 3D case) and not
 much can be said in general about the supercritical case where the uniqueness of solutions may be
  lost and finite-time blow up of the $L^\infty$-norm may occur (see \cite{Bud} for the numerical blow
   up evidence in 3D complex Ginzburg-Landau equation, see also \cite{cv,MZ} and references
    therein for study the long-time behavior of solutions without uniqueness using the
     multi-valued or trajectory approaches). We also mention here the so-called
      {\it anisotropic dissipativity} assumption:
     $$
     \sum_{i=1}^k f_i(u)u_i|u_i|^{l_i}\ge-C,
     $$
     where $l=(l_1,\cdots, l_k)$ is a sufficiently large vector, introduced in \cite{EZ03}. This restriction
     accompanied by the assumption that $a$ is diagonal gives $p_{crit}=\infty$ if $l=l(d)$ is large enough.
  \par
A natural alternative is to use the so-called {\it monotonicity} assumption:
%$$
\begin{equation}\label{0.mon}
f'(u)\ge-K
\end{equation}
%$$
which is also very widespread in the literature related with attractors. This assumption is
naturally related with the $H^1$-energy identity:
%$$
\begin{multline}\label{0.h1}
\frac12\frac d{dt}\|\Nx u(t)\|^2_{L^2}+(a\Dx u(t),\Dx u(t))+\\+(f'(u(t))\Nx u(t),\Nx u(t))=-(g,\Dx u(t))
\end{multline}
%$$
which is obtained by formal multiplication of \eqref{0.rds} by $-\Dx u$ and integration over $x$.
 Together with \eqref{0.mon}
this gives the dissipative control of the $H^1$-norm of the solution, see Lemma
 \ref{Lem1.h1} for the details. The critical growth exponent associated with this $H^1$-energy
  control is
 %$$
 \begin{equation}\label{0.crit-h1}
p_{crit}=1+\frac4{d-2},\ d>2
 \end{equation}
 %$$
can be found in many works, see \cite{BV,cv} and references therein. However, as pointed out
 in \cite{Z1}, the monotonicity assumption \eqref{0.mon} gives for free the control of $H^2$-norm of the
  solution $u(t)$ together with the $L^2$-norm of $f(u(t))$, namely, we have a priori estimates for the
   solutions in the nonlinear space
   $$
   \Bbb D:=\{u\in H^2(\Omega)\cap H^1_0(\Omega),\ f(u)\in L^2(\Omega)\}
   $$
   due to the control of the $L^2$-norm of $\Dt u(t)$, see Lemma \ref{Lem1.theta}
   and Corollary \ref{Cor1.d} below, and this gives us much better value of the critical exponent:
   %$$
   \begin{equation}\label{0.d-crit}
p_{crit}=1+\frac4{d-4},\ d>4.
   \end{equation}
   %$$
As far as we know, up to the moment, this is the best growth restriction which guarantees (of course,
under the monotonicity assumption \eqref{0.mon}) the global existence of smooth solutions and which is
 widely used nowadays not only for reaction-diffusion equations, but for many other related problems
  (like Cahn-Hilliard equations, see \cite{MZ} and references therein; strongly damped wave
   equations, see \cite{Chu12,KZ09} and reference therein, etc.).
   \par
We also note that the monotonicity assumption \eqref{0.mon} gives  the uniqueness of weak
solutions (= solutions in the energy phase space $H=L^2(\Omega)$, see section \ref{s5} below) even in the
 supercritical case which, in turn, allows to get a lot of information about the solutions and
 their long-time behavior in the supercritical case as well. The theory of equations \eqref{0.rds}
 in the critical or supercritical cases is of a great current interest,
  see for instance \cite{cv, CD03,CD19,MZ,Z03,ZS17} and references therein. However, in most cases
  rather essential
   extra restrictions on the nonlinearity $f$ are posed like the following two sided estimate:
   %$$
\begin{equation}\label{0.strong}
   C(|u|^{p-1}-1)\ge f'(u)\ge -K+\alpha |u|^{p-1},\ \ u\in\R^k,\ \ C,\alpha>0
\end{equation}
   %$$
which really simplifies the situation, but automatically excludes some interesting new phenomena which
 may appear in a general case.
 \par
The aim of the present paper (which can be considered as a
continuation of our work \cite{Z1}) is to give a comprehensive study of weak and strong solutions
 (=solutions in the phase space $\Bbb D$) of problem \eqref{0.rds} as well as their long-time behavior  in the supercritical case $p>p_{crit}$
with dissipative (assumption \eqref{0.dis} is fulfilled) and
 monotone (assumption \eqref{0.mon} is satisfied) nonlinearities
 trying to avoid/minimize  further restrictions on $f$.
\par
Weak and strong solutions of problem \eqref{0.rds} have been constructed in \cite{Z1}
 (see also section \ref{s5}). However,
in contrast to the case of assumptions \eqref{0.strong}, for   weak solutions the equation
is understood only in a
 sense of variational inequalities since we cannot guarantee that $f(u)\in L^1$ and, therefore, cannot treat the
  equation in the sense of distributions. By this reason, even the parabolic smoothing property (whether
   or not a weak solution becomes strong at the next time moment) becomes non-trivial and has been
   posed in \cite{Z1} as an open problem.
\par
Our first main result gives the positive answer on this question in the case where the nonlinearity has a
 polynomial growth rate.
 \begin{theorem}\label{Th0.main1} Let the nonlinearity $f$ satisfy the assumptions \eqref{0.crit}
  (for some $p>0$), \eqref{0.dis} and \eqref{0.mon}, the diffusion matrix have positive
   symmetric part and $g\in L^2(\Omega)$. Then any weak solution $u(t)$ starting
    from $u(0)\in H=L^2(\Omega)$ belongs to $\Bbb D$ for any $t>0$. In other words, the
     instantaneous parabolic $H$ to $\Bbb D$ smoothing property holds. In addition, the strong solutions of \eqref{0.rds}
      are dissipative in $\Bbb D$-norm as well.
 \end{theorem}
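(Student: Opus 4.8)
The plan is to establish the smoothing property by a bootstrap argument that upgrades regularity step by step, using energy estimates that only require the monotonicity and dissipativity assumptions together with the polynomial growth \eqref{0.crit}, and crucially exploiting that the needed multipliers can be justified for weak solutions even though $f(u)$ need not lie in $L^1$. First I would recall from Lemma \ref{Lem1.l2} that any weak solution $u(t)$ enjoys the dissipative $L^2$-bound together with $\Nx u\in L^2_{loc}((0,\infty);L^2)$ and $\sqrt{t}\,u\in L^\infty_{loc}$ in $H^1_0$ — i.e.\ the $H$-to-$H^1$ smoothing is already known. The goal is then to go from $H^1_0$ up to $\Bbb D$.

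The key step is to multiply \eqref{0.rds} by $\Dt u$ (equivalently, to differentiate the equation in time and test appropriately), which formally gives
\begin{equation*}
\|\Dt u(t)\|^2_{L^2}+\tfrac12\tfrac{d}{dt}\big[(a\Nx u,\Nx u)+2(F(u),1)\big]=(g,\Dt u),
\end{equation*}
where $F$ is a potential for $f$; but since $f$ need not be a gradient for systems, I would instead use the time-differentiated equation $\Dt v=a\Dx v-f'(u)v$ with $v=\Dt u$, test with $v$, and invoke the monotonicity \eqref{0.mon} to control the term $(f'(u)v,v)\ge -K\|v\|^2_{L^2}$. Combined with the positivity of the symmetric part of $a$ this yields, via a Gronwall argument with the weight $t$ (to absorb the singularity of $\Dt u$ at $t=0$), a local-in-time bound for $\sqrt{t}\,\Dt u$ in $L^\infty((0,T];L^2)$ in terms of the $H^1$-norm of the initial data at an intermediate time — which we already have. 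Once $\Dt u(t)\in L^2$ for $t>0$, the elliptic equation $a\Dx u-f(u)=\Dt u-g$ together with the monotonicity trick of Lemma \ref{Lem1.theta} (testing $a\Dx u-f(u)=h$ with $f(u)$ and with $\Dx u$, using that $f'(u)\ge -K$ makes the cross term controllable) gives simultaneously $u\in H^2\cap H^1_0$ and $f(u)\in L^2$, i.e.\ $u(t)\in\Bbb D$; this is exactly the content of Corollary \ref{Cor1.d}. The dissipativity in $\Bbb D$ then follows by iterating the same estimates on unit time intervals and using the already-established dissipativity in $H^1$.

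The main obstacle is \emph{rigour at the level of weak solutions}: the formal multipliers $\Dt u$ and $-\Dx u$ are not a priori admissible test functions, since for a weak solution the equation holds only as a variational inequality and $f(u)$ may fail to be integrable. To handle this I would work with a Galerkin or a Yosida/parabolic-regularization approximation $f_\lambda$ (or truncated $f_N$) for which all the manipulations are legitimate, derive the estimates above with constants independent of the regularization parameter, and then pass to the limit. The delicate point in the passage to the limit is identifying the weak limit of $f_\lambda(u_\lambda)$ with $f(u)$ in $L^2$ on the set $t\ge\tau>0$: here one uses the monotonicity of $f$ (so that the approximations converge monotonically or satisfy a Minty-type argument) together with the uniform $L^2$-bound on $f_\lambda(u_\lambda)$ coming from Lemma \ref{Lem1.theta}, plus strong $L^2$-convergence of $u_\lambda$ obtained from the uniform bound on $\Dt u_\lambda$ via Aubin--Lions. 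The polynomial growth \eqref{0.crit} is used only to guarantee that $f(u)$ makes sense and that the approximations are well-behaved (e.g.\ that $f_N(u)\to f(u)$ a.e.\ and the bounds are uniform); it plays no role in the size of $p_{crit}$ because the whole argument lives in $\Bbb D$ rather than in a Sobolev-subordination regime. I would also note that uniqueness of weak solutions (valid under \eqref{0.mon}) ensures the limit object is the original weak solution $u$, so the smoothing property is a statement about $u$ itself and not merely about some constructed solution.
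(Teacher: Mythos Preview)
Your outline has a genuine gap at the central step. After testing the time-differentiated equation $\Dt\theta=a\Dx\theta-f'(u)\theta$ with $\theta=\Dt u$ and inserting the weight $t$, Gronwall leaves you with
\[
t\|\theta(t)\|_{L^2}^2\le C\int_0^t\|\theta(s)\|_{L^2}^2\,ds,
\]
and you claim the right-hand side is controlled by the $H^1$-norm of $u$ at an intermediate time. But $\theta=a\Dx u-f(u)+g$, and while Lemma~\ref{Lem1.h1} indeed gives $a\Dx u\in L^2(0,T;L^2)$, you have no control whatsoever on $\int_0^T\|f(u)\|_{L^2}^2\,dt$ in the supercritical regime: that is precisely the quantity you are trying to establish. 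The alternative multiplier $\Dt u$ applied to the original equation would work only if $f$ were a gradient \emph{and} $F(u_0)\in L^1$, which you correctly note is unavailable; switching to the differentiated equation avoids that problem but creates this one. Your remark that ``the polynomial growth plays no role in the size of $p_{crit}$'' is a symptom: the argument you sketch is insensitive to $p$, which should be a warning sign, since the weak-to-strong smoothing was stated as an open problem in \cite{Z1} precisely because the standard manipulations fail.

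The paper's proof (Theorem~\ref{Th1.strange}) closes the gap by a genuinely different device: it multiplies the $\theta$-equation by $t^N\theta$ with $N$ \emph{large}, and then estimates the resulting term $t^{N-1}\|\theta\|_{L^2}^2$ not via an $L^2$ bound on $\theta$ (which is unavailable) but by writing $|\theta|^2=|\theta|^s\cdot|\theta|^{2-s}$ for a small $s>0$, substituting $\theta=a\Dx u-f(u)+g$ into the first factor, and estimating the $f$-contribution through $\|f(u)\|_{L^{2/p}}\le C(1+\|u\|_{L^2}^p)$. This is where the polynomial growth \eqref{0.crit} is used essentially: it gives control of $f(u)$ in the sub-unit Lebesgue space $L^{2/p}$, which combined with $\|\theta\|_{H^1}^{2-s}$ (via Sobolev embedding) and Young's inequality produces an absorbable $t^{2(N-1)/(2-s)}\|\theta\|_{H^1}^2$ once $N$ is chosen so that $2(N-1)/(2-s)\ge N$. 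The exponent $p$ enters the value of $N$ and hence the smoothing rate $t^{-N}$. Without this $L^q$-with-$q<1$ trick your bootstrap cannot get off the ground.
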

The proof of this theorem is based on estimation of $f(u)$ in Lebesgue spaces
 $L^q(\Omega)$ with $0<q<1$ and is given in section \ref{s6}.
\par
Our next result shows that the critical growth exponent can be slightly improved.

\begin{theorem}\label{Th0.main2} Let the assumptions of Theorem \ref{Th0.main1} hold and let,
in addition, the growth exponent $p$ of the nonlinearity satisfy
$$
p<p_{crit}+\eb=1+\frac4{d-4}+\eb,\ \ d>4
$$
for some small positive $\eb=\eb(a)$. Then any weak solution $u(t)$ of problem
 \eqref{0.rds} possesses an instantaneous
 $H$ to $L^\infty(\Omega)$ smoothing property. In particular, finite-time blow up of smooth solutions
 is impossible
 and the actual regularity of a solution $u(t)$ is restricted by the regularity of $\Omega$,
 $f$ and $g$ only. In the
  case where this data is $C^\infty$-smooth, the corresponding solution $u(t)$ will be also
  $C^\infty$ for any $t>0$.
\end{theorem}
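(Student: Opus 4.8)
The plan is to build on Theorem \ref{Th0.main1}, which already provides the instantaneous $H\to\Bbb D$ smoothing and the $\Bbb D$-dissipativity of strong solutions; it therefore suffices to prove an instantaneous $\Bbb D\to L^\infty$ smoothing estimate, with dissipative bounds, for strong solutions. So I may assume that $u\in L^\infty((\tau,\infty);\Bbb D)$ for every $\tau>0$, i.e.\ that $u(t)\in H^2\cap H^1_0$, $f(u(t))\in L^2$ and $\pt u(t)\in L^2$ with norms bounded uniformly for $t\ge\tau$ (Lemma \ref{Lem1.theta}, Corollary \ref{Cor1.d}). From here I would run a finite bootstrap combining three ingredients: $L^q$-smoothing for the time derivative coming from the linearised equation; weighted energy/elliptic estimates for $u$ in which the nonlinearity is handled \emph{only} through the monotonicity \eqref{0.mon}; and the growth restriction \eqref{0.crit} to convert integrability of $u$ into integrability of $f(u)$.

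For the first ingredient, differentiate the equation in time: $\theta:=\pt u$ solves the linear system $\pt\theta=a\Dx\theta-f'(u)\theta$. Testing with $|\theta|^{q-2}\theta$, the zeroth order term satisfies $(f'(u)\theta,|\theta|^{q-2}\theta)\ge -K\|\theta\|_{L^q}^q$ by \eqref{0.mon}, while integration by parts turns the diffusion term into $\int|\theta|^{q-2}Q_q(a;D\theta)$, a quadratic form in $D\theta$ whose positivity holds only for $q$ below a threshold $q_0(a)>2$ measuring the non-self-adjointness of the matrix $a$ (with $q_0(a)=+\infty$ when $a$ is a scalar multiple of the identity). Together with $\|\nabla(|\theta|^{q/2})\|_{L^2}^2\gtrsim\|\theta\|_{L^{qd/(d-2)}}^q$ and the known $L^2$-bound for $\theta(\tau)$, a standard parabolic $L^2\to L^q$ smoothing scheme then gives $\pt u\in L^\infty((\tau',\infty);L^q)$ for every $\tau'>0$ and every $q<q_0(a)$.

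For the second and third, I would bootstrap $u$ on the stationary form $a\Dx u=\pt u+f(u)-g$, testing against $-\divv(|\nabla u|^{s-2}\nabla u)$. The nonlinear term then appears as $-\int(f'(u)\nabla u:\nabla u)|\nabla u|^{s-2}\le K\|\nabla u\|_{L^s}^s$ (again by \eqref{0.mon}); the principal part contributes $\int|\nabla u|^{s-2}\langle aD^2u,D^2u\rangle$ minus an $(s-2)$-defect, which is a good term for $s<q_0(a)$; and the remaining forcing is absorbed once $\pt u$ and $g$ are controlled in a suitable Lebesgue space, which is a \emph{low} exponent (dual to $s$) and is supplied by the first step. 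Since $\int|\nabla u|^{s-2}|D^2u|^2\simeq\|\nabla(|\nabla u|^{s/2})\|_{L^2}^2\gtrsim\|\nabla u\|_{L^{sd/(d-2)}}^s$, one such step already improves the integrability of $\nabla u$ beyond what $H^2$ gives, by an $a$-dependent amount; iterating, and using \eqref{0.crit} to keep $f(u)$ in the spaces where these computations are legitimate, the exponents climb. Because $p$ exceeds $p_{crit}=1+\frac4{d-4}$ only by the small amount $\eb=\eb(a)$, the recursion stays consistent and, after finitely many steps, produces $\nabla u\in L^q$ for some $q>d$, hence $u(t)\in L^\infty(\Omega)\cap C^\alpha(\Omega)$ for $t>0$; the uniform-in-$t$ bounds obtained along the way give the asserted dissipativity. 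Once $u(t)\in L^\infty$, the nonlinearity and all its derivatives are controlled by Sobolev norms of $u$ with no growth obstruction, so the usual parabolic/Schauder bootstrap applied to $\pt u-a\Dx u=g-f(u)$ propagates the smoothness of $\Omega$, $f$ and $g$ to $u(t)$, giving $C^\infty$ regularity for $t>0$ when the data is $C^\infty$.

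The delicate point is the middle step. One must track precisely how much supercriticality the exponent recursion tolerates, and this margin is entangled both with the threshold $q_0(a)$ — that is, with the failure of the $L^q$-maximum principle for systems — and with the requirement that the nonlinear terms in the weighted estimates make sense using the one-sided bound \eqref{0.mon} alone, without the two-sided assumption \eqref{0.strong}; as in the proof of Theorem \ref{Th0.main1}, this may force one to estimate $f(u)$ in $L^q(\Omega)$ with $q<1$. Verifying that $\eb(a)>0$ can indeed be chosen (small, and depending on $a$ only) is exactly the content one has to pin down.
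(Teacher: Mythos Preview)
Your first two ingredients match the paper exactly: Proposition~\ref{Prop5.der} multiplies the $\theta$-equation by $\theta|\theta|^r$ to obtain $\Dt u\in L^{r+2}$ for some small $r=r(a)>0$, and the Appendix (Theorem~\ref{ThA.main}) is precisely your weighted elliptic test $-\divv(|\nabla u|^{s-2}\nabla u)$, yielding $\nabla u\in L^{d(r+2)/(d-2)}$ and hence $u\in L^{q_0}$ with $q_0=\frac{d(r+2)}{d-r-4}>\frac{2d}{d-4}$.

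The gap is in the bootstrap. You propose to iterate the weighted elliptic test, but this cannot climb: the exponent $s$ in $-\divv(|\nabla u|^{s-2}\nabla u)$ is capped by your own threshold $q_0(a)$, and the forcing term requires $\Dt u\in L^{s}$, which is likewise capped --- the $\theta$-equation, using only $f'\ge -K$, never yields more than $L^{q_0(a)}$ regardless of how regular $u$ has become. So the monotonicity-only estimates deliver a \emph{one-shot} gain, not an iterable one, and your sentence ``iterating, and using \eqref{0.crit} to keep $f(u)$ in the spaces where these computations are legitimate'' does not describe a mechanism that actually moves the exponents. The paper instead \emph{switches schemes} after the first gain: it treats the equation as linear, $\Dt u-a\Dx u=g-f(u)$, and applies standard interior $L^s$-parabolic regularity (estimate~\eqref{5.linreg}), reading $f(u)$ as a known source with $\|f(u)\|_{L^{q_k/p}}\le C(1+\|u\|_{L^{q_k}}^p)$ via the growth bound~\eqref{0.crit}. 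The resulting recursion
\[
q_{k+1}=\frac{q_k d}{pd-q_k(2-\kappa)}
\]
is monotone increasing precisely when $p<1+(2-\kappa)q_0/d$; with the improved starting value $q_0$ this threshold exceeds $p_{\rm crit}=1+\frac{4}{d-4}$ by an amount determined by $r(a)$, which is exactly where $\eb(a)$ comes from.

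So the content you flagged as ``exactly what one has to pin down'' is, in the paper, not a refinement of your weighted iteration but a change of mechanism: monotonicity buys the first step past $H^2$, and then the polynomial growth assumption drives the rest through ordinary linear parabolic theory.
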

We now turn to the attractors. The existence of a global attractor for problem \eqref{0.rds} in $H$ has
 been verified in \cite{Z1}, however, the question about strong attraction in $\Bbb D$ has been
  remained open. Our next result gives a positive answer on this question under the extra restriction
  %$$
  \begin{equation}\label{0.fp}
|f'(u)|\le C(|f(u)|+1+|u|),\ \ u\in\R^k.
  \end{equation}
  %$$
\begin{theorem}\label{Th0.main3} Let the assumptions of Theorem \ref{Th0.main1} hold and let,
in addition, $f$ satisfy \eqref{0.fp}. Then the solution semigroup $S(t)$ associated with
 problem \eqref{0.rds} possesses a compact global attractor $\Cal A$ in the phase space $\Bbb D$.
\end{theorem}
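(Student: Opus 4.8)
The plan is to prove that the global attractor $\A$ already constructed in $H=L^2(\Omega)$ in \cite{Z1} is also the compact global attractor in the phase space $\Bbb D$; the only genuinely new ingredient is a smoothing and compactness estimate in $\Bbb D$ that exploits assumption \eqref{0.fp} together with the monotonicity \eqref{0.mon}.

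First I would collect what is already at hand. By Theorem \ref{Th0.main1} the solution operators $S(t)$ form a semigroup on $\Bbb D$ (endowed with the natural metric $d_{\Bbb D}(u_1,u_2):=\|u_1-u_2\|_{H^2}+\|f(u_1)-f(u_2)\|_{L^2}$), this semigroup maps $H$ into $\Bbb D$ for every $t>0$ and is dissipative in the $\Bbb D$-metric, so it possesses a bounded absorbing set $\B\subset\Bbb D$. Since the $H$-attractor satisfies $\A=S(1)\A$ and is bounded in $H$, the instantaneous $H\to\Bbb D$ smoothing forces $\A$ to be a bounded subset of $\Bbb D$; thus $\A$ is the only possible candidate and it remains to show that (a) $\A$ is compact in the metric $d_{\Bbb D}$, and (b) $\A$ attracts every $\Bbb D$-bounded set in the topology of $\Bbb D$.

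The core of the argument is a bootstrap based on the equation for the time derivative $\theta=\Dt u$,
\begin{equation*}
\Dt\theta=a\Dx\theta-f'(u)\theta .
\end{equation*}
By \eqref{0.mon} this equation is dissipative: multiplying by $\theta$ and using the a priori control of $\theta$ in $L^2$ already available from the proof of Theorem \ref{Th0.main1} yields a uniform bound on $\|\theta(t)\|_{L^2}$ for $t$ bounded away from zero. The decisive step is the next estimate: multiplying by $-\Dx\theta$ gives
\begin{equation*}
\frac12\frac d{dt}\|\Nx\theta\|^2_{L^2}+c\|\Dx\theta\|^2_{L^2}\le\|f'(u)\,\theta\|_{L^2}\,\|\Dx\theta\|_{L^2},
\end{equation*}
and this is precisely where \eqref{0.fp} enters: it allows one to replace $|f'(u)|\,|\theta|$ by $(|f(u)|+1+|u|)\,|\theta|$, with $\|f(u)\|_{L^2}$ and $\|u\|_{H^2}$ already controlled on $\B$, so that a H\"older--Sobolev interpolation estimates $\|f'(u)\theta\|_{L^2}$ by a small multiple of $\|\Dx\theta\|_{L^2}$ plus lower order terms and closes the inequality. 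Depending on $d$ and on $p$, one may have to iterate this scheme, each time feeding the improved bound on $\theta$ back through the identity $a\Dx u=\theta+f(u)-g$ and elliptic regularity to upgrade the regularity of $u$ and then, via \eqref{0.fp}, of $f(u)$. The outcome is that $S(1)$ maps $\B$ into a set bounded in a space compactly embedded into $\Bbb D$; hence $S(1)\B$ is precompact in $\Bbb D$, which gives (a) since $\A=S(1)\A$, and also the asymptotic compactness of $S(t)$ in $\Bbb D$ as soon as an arbitrary $\Bbb D$-bounded set has entered $\B$. For (b) I would upgrade the already known $H$-attraction: the difference $w=u_1-u_2$ of two solutions solves $\Dt w=a\Dx w-l(t,x)w$ with $l(t,x)=\int_0^1 f'(su_1+(1-s)u_2)\,ds\ge-K$, so $w$ enjoys exactly the same parabolic smoothing as $\theta$ above; combining this smoothing for differences with the transitivity of attraction and the $H$-attraction of $\A$ gives $\dist_{\Bbb D}(S(t)B,\A)\to0$ for every bounded $B\subset\Bbb D$. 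The existence of the compact global attractor in $\Bbb D$ then follows from the standard criterion: a bounded absorbing set together with asymptotic compactness, $S(t)$ being continuous on $\Bbb D$ as a by-product of the smoothing for differences.

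I expect the main obstacle to be exactly this higher order estimate for $\theta$ (equivalently for $w$): in the genuinely supercritical regime there is no two-sided bound such as \eqref{0.strong}, so $f'(u)\theta$ cannot be absorbed directly and one must balance the $H^2$-bound of $u$, the $L^2$-bound of $f(u)$ and the parabolic smoothing of $\theta$ against one another in a carefully tuned inequality; making this work in all admissible dimensions, possibly at the cost of a finite iteration of the bootstrap, is the delicate point, and it is exactly what \eqref{0.fp} is designed to make possible. A subsidiary issue to keep in mind is that $\Bbb D$ is a nonlinear set, so precompactness and continuity must be read in the metric $d_{\Bbb D}$; this is, however, automatic once $\A$ is known to be bounded in a space compactly embedded in $H^2$ with $f(\A)$ bounded in a space compactly embedded in $L^2$, the limit being identified pointwise a.e.
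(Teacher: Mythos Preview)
Your plan has a genuine gap at the ``decisive step''. Multiplying the $\theta$-equation by $-\Dx\theta$ produces the term $\|f'(u)\theta\|_{L^2}$ on the right, and you claim that \eqref{0.fp} together with $\|f(u)\|_{L^2}+\|u\|_{H^2}\le C$ lets you absorb it into $\|\Dx\theta\|_{L^2}$. But \eqref{0.fp} gives only $f'(u)\in L^2$, nothing better; hence by H\"older $\|f'(u)\theta\|_{L^2}\le\|f'(u)\|_{L^2}\|\theta\|_{L^\infty}$, and no lower Lebesgue exponent on $\theta$ is available. In the genuinely supercritical regime $d>4$ (the only interesting one here) the control of $\|\Dx\theta\|_{L^2}$ does not yield $\theta\in L^\infty$, so the inequality does not close. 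Your proposed iteration does not rescue this: feeding $\theta$ back through $a\Dx u-f(u)=\theta-g$ can upgrade $u$ slightly (this is the appendix estimate), but it does not upgrade $f(u)$ beyond $L^2$, so $f'(u)$ stays in $L^2$ and you are stuck at the same place. The same obstruction hits your smoothing-for-differences argument: controlling $l(t,x)=\int_0^1 f'(su_1+(1-s)u_2)\,ds$ in $L^2$ would require $f(su_1+(1-s)u_2)\in L^2$, which is a convexity statement not implied by \eqref{0.fp}.

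The paper avoids this by not trying to reach a space compactly embedded in $\Bbb D$ at all. First, instead of $-\Dx\theta$ it multiplies the $\theta$-equation by $\theta|\theta|^r$ for a small $r>0$ depending only on $a$, which yields $\theta\in L^{2+r}$; then \eqref{0.fp} gives $f'(u)\theta\in L^s$ with $\frac1s=\frac12+\frac1{2+r}$ (so $1<s<2$), and $L^s$ parabolic interior regularity places $\theta(1)$ in $W^{2(1-1/s),s}$, hence by interpolation with $L^{2+r}$ into a compact subset of $L^2$. Second, compactness in $H^2$ is obtained not by bootstrap but by an energy argument on the elliptic equation $a\Dx u-f(u)=\theta-g$: one verifies the integration-by-parts identity $(f(u),\Dx u)=-(f'(u)\Nx u,\Nx u)$ for $u\in\Bbb D$ (this is where \eqref{0.fp} is actually used, and the proof is nontrivial because $\Bbb D$ is nonlinear), and then the strong $L^2$ convergence of $\theta_n(1)$ combined with the resulting energy equality and lower semicontinuity forces $(a\Dx u_n,\Dx u_n)\to(a\Dx u,\Dx u)$, hence strong $H^2$ convergence. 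The upshot is compactness of $S(1)B_R$ in $\Bbb D$, from which the attractor statement follows by the standard criterion; no $H^{2+\eb}$ bound is ever claimed, and indeed none is expected in this generality.
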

The proof of this theorem is given in section \ref{s7} and is based on the energy type arguments.
 We expect that assumption \eqref{0.fp} is technical and can be removed, but it is strongly related with
  the validity of the integration by parts formula
  $$
  (f(u),\Dx u)=-(f'(u)\Nx u,\Nx u),\ \ u\in\Bbb D,
  $$
see the discussion in section \ref{s9} below.
\par
Finally, we study the finite-dimensionality of the constructed global attractor $\Cal A$ in $\Bbb D$ and
 the existence
 of the so-called exponential attractor (see \cite{EFNT,EMZ00,EMZ05,MZ} and also section \ref{s8}
 for more details).

\begin{theorem}\label{Th0.main4} Let the assumptions of Theorem \ref{Th0.main3} hold and let, in addition, some extra
convexity assumptions on the function $u\to |f(u)|$ be posed (see formula \eqref{5.con-vex}). Then the solution
semigroup $S(t)$ associated with equation \eqref{0.rds} possesses an exponential attractor
 $\Cal M$ in $H$ and,
 in particular, the fractal dimension of the global attractor $\Cal A$ in $H$ is finite.
\end{theorem}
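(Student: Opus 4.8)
The plan is to obtain $\Cal M$ from the standard abstract construction of exponential attractors (see \cite{EFNT,EMZ00,EMZ05,MZ}). By Theorem \ref{Th0.main3} the semigroup $S(t)$ possesses a bounded, positively invariant absorbing set $\B\subset\Bbb D$; since $\Bbb D$ is compactly embedded in $H=L^2(\Omega)$, the set $\B$ is compact in $H$ and it suffices to build an exponential attractor for $S(t)\colon\B\to\B$ in $H$. The abstract scheme then reduces everything to two ingredients: (i) a \emph{smoothing estimate} --- for some fixed $T>0$ and some $\sigma>0$ with $H^\sigma(\Omega)$ compactly embedded in $H$, the map $S(T)$ is Hölder continuous from $(\B,H)$ into $H^\sigma(\Omega)$; and (ii) a \emph{continuity estimate} --- $(t,u)\mapsto S(t)u$ is Hölder in $t$ and Lipschitz in $u$ as a map $[0,T]\times\B\to H$. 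Granting (i)--(ii), the scheme produces an exponential attractor $\Cal M$ in $H$, and since $\Cal A\subset\Cal M$ the fractal dimension of $\Cal A$ in $H$ is finite.

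The core is the estimate for the difference $v:=u_1-u_2$ of two trajectories $u_i(t)=S(t)\bar u_i$, $\bar u_i\in\B$, which solves $\Dt v=a\Dx v-(f(u_1)-f(u_2))$. Writing $f(u_1)-f(u_2)=\big(\int_0^1 f'(\theta u_1+(1-\theta)u_2)\,d\theta\big)v$ and using \eqref{0.mon} together with positivity of the symmetric part of $a$, one gets $\tfrac12\tfrac d{dt}\|v\|_{L^2}^2+\alpha\|\Nx v\|_{L^2}^2\le K\|v\|_{L^2}^2$, hence $\|v(t)\|_{L^2}\le e^{Kt}\|v(0)\|_{L^2}$ (the Lipschitz-in-data part of (ii)) and the averaged smoothing bound $\int_0^T\|v(s)\|_{H^1}^2\,ds\le C_T\|v(0)\|_{L^2}^2$. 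Supercriticality makes the pointwise control of $f(u_1)-f(u_2)$ the delicate point, and here the new hypotheses enter: by \eqref{0.fp} one has $|f'(\theta u_1+(1-\theta)u_2)|\le C(|f(\theta u_1+(1-\theta)u_2)|+1+|u_1|+|u_2|)$, while the convexity assumption \eqref{5.con-vex} dominates the intermediate value, $|f(\theta u_1+(1-\theta)u_2)|\le|f(u_1)|+|f(u_2)|$; therefore
\begin{equation*}
|f(u_1)-f(u_2)|\le C\,m\,|v|,\qquad m:=|f(u_1)|+|f(u_2)|+1+|u_1|+|u_2|,
\end{equation*}
with $\|m(t)\|_{L^2(\Omega)}\le C_R$ uniformly for $u_i(t)\in\B$ (the $\Bbb D$-bound of Theorem \ref{Th0.main3}, see also Corollary \ref{Cor1.d}). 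Since $v(t)$ is moreover bounded in $H^2(\Omega)$ on $\B$, a Hölder--Sobolev interpolation yields, for a suitable $r\in(1,2)$ and some $\delta=\delta(d,r)\in[0,1)$ (with $\delta=0$ in low dimensions),
\begin{equation*}
\|f(u_1(t))-f(u_2(t))\|_{L^r(\Omega)}\le C_R\,\|v(t)\|_{L^2}^{1-\delta}\le C_R\,e^{(1-\delta)Kt}\,\|v(0)\|_{L^2}^{1-\delta}.
\end{equation*}

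To finish I would feed this into the variation-of-constants formula
\begin{equation*}
v(T)=e^{a\Dx T}v(0)-\int_0^T e^{a\Dx(T-s)}\big(f(u_1(s))-f(u_2(s))\big)\,ds .
\end{equation*}
The first term lies in $H^2(\Omega)$ with norm $\le C_T\|v(0)\|_{L^2}$, and, choosing $\sigma>0$ small and $r$ close enough to $2$ that the analytic-semigroup bound $\|e^{a\Dx\tau}\|_{\mathcal L(L^r(\Omega),H^\sigma(\Omega))}\le C\tau^{-\gamma}$ holds with some $\gamma<1$, the integral term is $\le C_T\sup_{[0,T]}\|f(u_1)-f(u_2)\|_{L^r}\le C_T\|v(0)\|_{L^2}^{1-\delta}$; this gives the smoothing estimate $\|v(T)\|_{H^\sigma(\Omega)}\le C_T\|v(0)\|_{L^2}^{1-\delta}$ of (i). The Hölder-in-time part of (ii) follows by interpolating the $L^2_tH^1$ bound above against a negative-norm bound for $\Dt v$ read off from the equation together with the $L^r$-estimate just proved; for a single trajectory on $\B$ one may instead use the $L^2$-control of $\Dt u$ from Theorem \ref{Th0.main3}. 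Applying the abstract theorem then concludes the proof (the $\ell$-trajectory variant of the scheme works equally well).

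The main obstacle is the step producing the $L^r$-bound for $f(u_1)-f(u_2)$: in the genuinely supercritical range the best a priori information on this difference is the pointwise bound $|f(u_1)-f(u_2)|\le Cm|v|$ with $m\in L^2(\Omega)$, so squeezing it into a space $L^r$ with $r$ close enough to $2$ that $e^{a\Dx\tau}$ still smooths $L^r$ into some $H^\sigma$, $\sigma>0$, with a time-integrable singularity is tight, and the convexity hypothesis \eqref{5.con-vex} is exactly what makes the chain of inequalities work, by controlling the values of $f$ along the segment $[u_2,u_1]$. A secondary point: in those dimensions where only a Hölder --- rather than Lipschitz --- smoothing of the difference is obtained, one must appeal to the Hölder version of the abstract exponential-attractor theorem; and, as in Theorem \ref{Th0.main3}, the energy computations above rely on the integration-by-parts identity $(f(u),\Dx u)=-(f'(u)\Nx u,\Nx u)$ for $u\in\Bbb D$, which is again guaranteed by \eqref{0.fp} (cf.\ section \ref{s9}).
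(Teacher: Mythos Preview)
Your overall architecture is correct and matches the paper's: reduce to the abstract squeezing criterion (Proposition~\ref{Prop6.a-exp}), work on a $\Bbb D$-bounded absorbing ball, verify Lipschitz continuity in the data and in time, and use \eqref{5.fprime} together with the convexity assumption \eqref{5.con-vex} to control the linearised coefficient $l(t)=\int_0^1 f'(su_1+(1-s)u_2)\,ds$ in $L^\infty_tL^2_x$. That is exactly how the paper begins.

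The gap is in your smoothing step via variation of constants. With $m\in L^2(\Omega)$ and $v\in H^2(\Omega)$ you can only place $f(u_1)-f(u_2)=m\,v$ in $L^r(\Omega)$ with $1/r=1/2+1/q$ and $q\le 2d/(d-4)$ (the Sobolev exponent of $H^2$), hence $r\le d/(d-2)$; you cannot make $r$ ``close to $2$'' in high dimension. The corresponding semigroup singularity is $\|e^{a\Dx\tau}\|_{L^{d/(d-2)}\to L^2}\le C\tau^{-(d-4)/4}$, which ceases to be time-integrable once $d\ge8$, so the Duhamel bound on $v(T)$ in any $H^\sigma$ fails there. Replacing the $H^2$-bound by the $L^2_tH^1$-control of $v$ only makes the exponent worse (it becomes $(d-2)/4$). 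Since the interesting supercritical range is precisely $d\ge5$, this is a genuine obstruction, not a cosmetic one.

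The paper closes this gap by an extra, dimension-free estimate for the difference that you do not use: one multiplies \eqref{6.dif} by $\theta|\theta|^r$ with $r=r(a)>0$ small (exactly as in Proposition~\ref{Prop5.der}) and obtains the \emph{Lipschitz} bound $\|\theta(T)\|_{L^{r+2}}\le C_T\|\theta(0)\|_{L^2}$. In parallel, the $L^2$-control of $l$ gives $l\theta\in L^s_{t,x}$ for some $s>1$, and $L^s$-parabolic interior regularity places $\theta(T)$ in $W^{2(1-1/s),s}\subset W^{1-\kappa,1}$. Neither estimate alone lands in an $H^\eb$, but the interpolation
\[
W^{1-\kappa,1}(\Omega)\cap L^{r+2}(\Omega)\hookrightarrow H^{(1-\kappa)\frac r{2(r+1)}}(\Omega)
\]
(cf.\ \eqref{5.int-in}) combines them into the Lipschitz squeezing $\|\theta(T)\|_{H^\eb}\le C_T\|\theta(0)\|_{L^2}$, so Proposition~\ref{Prop6.a-exp} applies directly and no H\"older variant of the abstract theorem is needed. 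The $\theta|\theta|^r$ multiplier is the missing ingredient in your argument; with it in hand, your scheme and the paper's coincide.
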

The finite-dimensionality of the global attractor $\Cal A$ has been established in \cite{Z1} under the
 similar assumptions using the so-called method of $l$-tra\-jec\-to\-ries developed in \cite{MN96,MP02}.
  In section
  \ref{s8} we suggest an alternative more transparent method for constructing of an
   exponential attractor which does not utilize $l$-trajectories and works directly in the phase space.
\par
The paper is organized as follows.
\par
Notations and spaces which will be used throughout of the paper are introduced in section \ref{s2} and
 the standard a priori estimates for the solutions $u(t)$ of problem \eqref{0.rds} are recalled
  in section \ref{s1.1}.
\par
The existence of strong solutions for problem \eqref{0.rds} is verified in section \ref{s.s} based on special
approximations of the nonlinearity $f$. The definition of a weak solution of problem \eqref{0.rds}
 in the sense of variational inequalities as well as the proof of its global existence and uniqueness is
 given in section \ref{s5}.
 Moreover, the existence of a global attractor $\Cal A$ for the solution semigroup $S(t)$ is also
  verified there.
\par
The weak to strong instantaneous smoothing property, see Theorem \ref{Th0.main1}, is verified in
 section \ref{s6}. The further regularity of strong solutions is obtained in section \ref{s7}.
  In particular, the proof of Theorems \ref{Th0.main2}
  and \ref{Th0.main3} are given there. Some results about the partial regularity of the
  elliptic problem associated with
  equations \eqref{0.rds} which have an independent interest are obtained in Appendix \ref{A}. The
  existence of an exponential attractor $\Cal M$, see Theorem \ref{Th0.main4}, is given in section \ref{s8}.
  \par
  Finally, section \ref{s9} discusses natural extensions of the developed theory to other classes
  of dissipative PDEs, in particular, to fractional reaction-diffusion systems and (fractional)
   Cahn-Hilliard type equations. At the end of this section we also discuss some important
    (at least from our point of view) open problems for further investigation.

\section{Assumptions and preliminaries}\label{s2}
Throughout of the paper we consider the following reaction-diffusion system in
 a bounded domain $\Omega\subset\R^d$:
%$$
\begin{equation}\label{1.main}
\Dt u=a\Dx u-f(u)+g,\ \ u\big|_{\partial\Omega}=0,\ \ u\big|_{t=0}=u_0.
\end{equation}
%$$
Here $u=(u_1,\cdots,u_k)$ is an unknown vector valued function, $a$ is a diffusion matrix satisfying
%$$
\begin{equation}\label{1.a}
a+a^*>0,
\end{equation}
%$$
$g\in L^2(\Omega)$ is a given external force and the nonlinearity $f$ is assumed
to satisfy the following conditions:
%$$
\begin{equation}\label{1.f}
\begin{cases}
1.\ f\in C^1(\R^k,\R^k),\\
2.\ f(u).u\ge -C, \ u\in\R^k, \\
3.\ f'(u)\ge -K,\ \ u\in\R^k,
\end{cases}
\end{equation}
%$$
where $C$ and $K$ are some fixed constants, $u.v$ stands for the standard inner product in $\R^k$
and $f'(u)\ge-K$ means $f'(u)\xi.\xi\ge-K|\xi|^2$ for all $\xi\in\R^k$.
\par
For any $l\in\Bbb N$ and any $1\le p\le \infty$ we denote by $W^{l,p}(\Omega)$ the
Sobolev space of distributions $u\in\Cal D'(\Omega)$ such that $u$ and all its partial
derivatives up to order $l$ inclusively belong to the Lebesgue space $L^p(\Omega)$. As usual,
for non-integer values of $l$, we define $W^{l,p}(\Omega):=B^l_{p,p}(\Omega)$ using real interpolation
 ($B^l_{p,p}$ is a classical Besov space, see e.g., \cite{tri}). Moreover, the symbol $W^{l,p}_0(\Omega)$
  stands for the closure of $C^\infty_0(\Omega)$ in $W^{l,p}(\Omega)$ and the space $W^{-l,p}(\Omega)$
   is defined as a dual space to $W^{l,p}_0(\Omega)$ with respect to the standard inner
    product in $H=L^2(\Omega)$. To simplify the notations, we will write $H^l(\Omega)$
     instead of $W^{l,2}(\Omega)$.
\par
In a sequel, we will also use the space $L^p(\Omega)$ with $0<p<1$ and use the standard notation
$$
\|u\|_{L^p(\Omega)}:=\(\int_\Omega |u(x)|^p\,dx\)^{1/p}
$$
simply ignoring the fact that it is not a norm. Recall that the topology in this space is defined by
 the metric $d_p(u,v):=\|u-v\|_{L^p}^p$.
 \par
We say that the function $u(t,x)$ is a {\it strong} solution of \eqref{1.main} if
%$$
\begin{equation}\label{1.u}
u\in C_w([0,T], H^2(\Omega)\cap H^1_0(\Omega)),\ \ f(u)\in C_w([0,T],L^2(\Omega))
\end{equation}
%$$
and equation \eqref{1.main} is satisfied in the sense of distributions. In particular, for strong solutions
 we require that the initial data $u_0\in \mathbb D$, where
%$$
\begin{multline}\label{1.d}
\mathbb D:=\{u\in H^2(\Omega)\cap H^1_0(\Omega),\ f(u)\in L^2(\Omega)\},\ \\
\|u\|^2_{\mathbb D}:=\|u\|^2_{H^2}+\|f(u)\|^2_{L^2}.
\end{multline}
%$$
Note that in general $\Bbb D$ is {\it not a linear} space and this  causes a lot of extra difficulties in
 comparison with the case of linear phase space. We define the topology in the space $\Bbb D$
  using the embedding
  $$
  j:\Bbb D\to [H^2(\Omega)\cap H^1_0(\Omega)]\times L^2(\Omega),\ \ j(u)=\{u,f(u)\}.
  $$
In particular, the sequence $u_n\to u$ strongly in $\Bbb D$ if $u_n\to u$ in $H^2(\Omega)$
 and $f(u_n)\to f(u)$ in $L^2(\Omega)$. Analogously, we say that $u_n\rightharpoondown u$ {\it weakly}
 in $\Bbb D$ if $u_n\to u$ weakly in $H^2(\Omega)$ and $f(u_n)\to f(u)$ weakly in $L^2(\Omega)$.

\section{A priori estimates}\label{s1.1}
In this section, we give a number of more or less standard estimates for strong solutions of
 problem \eqref{1.main}
which will be justified later. We start with the dissipative estimate in the space $H=L^2(\Omega)$.

\begin{lemma}\label{Lem1.l2} Let $g\in H$,  assumptions \eqref{1.a}-\eqref{1.f} hold and let $u$ be
a sufficiently smooth solution of \eqref{1.main}. Then, the following estimate is valid:
%$$
\begin{multline}\label{1.l2}
\|u(T)\|^2_{L^2}+\int_T^{T+1}\|u(t)\|^2_{H^1}\,dt+\int_T^{T+1}|f(u).u|\,dt\le\\\le
 Ce^{-\alpha T}\|u_0\|^2_{L^2}+C(\|g\|^2_{L^2}+1),
\end{multline}
%$$
where the positive constants $C$ and $\alpha$ are independent of $t$ and $u_0$.
\end{lemma}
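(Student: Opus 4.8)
The plan is to derive the $L^2$-dissipative estimate \eqref{1.l2} by the standard energy method: multiply \eqref{1.main} by $u$ in $L^2(\Omega)$, integrate over $\Omega$, and use the structural assumptions \eqref{1.a}--\eqref{1.f} to control each term. Since the statement is only claimed for sufficiently smooth solutions (to be justified later), I may perform all the formal manipulations without worrying about regularity of $f(u)$ here.

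First I would take the scalar product of the equation with $u(t)$, which gives the energy identity \eqref{0.l2}, namely
\[
\frac12\frac{d}{dt}\|u(t)\|^2_{L^2}+(a\Nx u,\Nx u)+(f(u),u)=(g,u).
\]
By assumption \eqref{1.a} the symmetric part $\tfrac12(a+a^*)$ is positive definite, so $(a\Nx u,\Nx u)\ge\nu\|\Nx u\|^2_{L^2}$ for some $\nu>0$; by the Poincaré inequality on the bounded domain $\Omega$ with Dirichlet conditions, $\|\Nx u\|^2_{L^2}\ge\lambda_1\|u\|^2_{L^2}$. The dissipativity condition \eqref{0.dis}, i.e. $f(u).u\ge-C$ pointwise, yields $(f(u),u)\ge-C|\Omega|$. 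For the right-hand side I would use Cauchy--Schwarz and Young: $(g,u)\le\tfrac{\nu\lambda_1}{4}\|u\|^2_{L^2}+C\|g\|^2_{L^2}$. Collecting these, and splitting the gradient term so that half of it is kept and half is used to absorb the $(g,u)$ term, I obtain a differential inequality of the form
\[
\frac{d}{dt}\|u(t)\|^2_{L^2}+\alpha\|u(t)\|^2_{L^2}+\tfrac{\nu}{2}\|\Nx u(t)\|^2_{L^2}+2|(f(u),u)|\le C(\|g\|^2_{L^2}+1),
\]
where I have also used $|(f(u),u)|\le(f(u),u)+2C|\Omega|$ (valid since $f(u).u\ge-C$ pointwise implies $|f(u).u|\le f(u).u+2C$) to replace $(f(u),u)$ by $|(f(u),u)|$ in the sink term. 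Gronwall's lemma applied to this inequality gives $\|u(T)\|^2_{L^2}\le e^{-\alpha T}\|u_0\|^2_{L^2}+\tfrac{C}{\alpha}(\|g\|^2_{L^2}+1)$, which is the pointwise part of \eqref{1.l2}; integrating the inequality over $[T,T+1]$ and using the already-established bound on $\|u(t)\|^2_{L^2}$ at time $T$ controls $\int_T^{T+1}\|u\|^2_{H^1}\,dt$ (combining the $L^2$ and $\Nx$ bounds via $\|u\|^2_{H^1}\sim\|u\|^2_{L^2}+\|\Nx u\|^2_{L^2}$) and $\int_T^{T+1}|f(u).u|\,dt$ simultaneously.

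The main point requiring care — rather than a genuine obstacle, given the "sufficiently smooth" hypothesis — is the justification of the energy identity itself: one needs $u$ regular enough that $\tfrac{d}{dt}\|u\|^2_{L^2}=2(\Dt u,u)$ and that $(a\Dx u,u)=-(a\Nx u,\Nx u)$ via integration by parts with the Dirichlet boundary condition, and that $(f(u),u)$ is a well-defined integrable quantity. All of this is legitimate for smooth solutions and the lemma explicitly restricts to that case; the transfer to weak and strong solutions is deferred to later sections. A minor bookkeeping detail is choosing the constants $\alpha$ and the Young-inequality weights consistently so that the absorbed fraction of $\|\Nx u\|^2_{L^2}$ still leaves a positive multiple of $\|u\|^2_{H^1}$ for the time-integrated estimate; this is routine.
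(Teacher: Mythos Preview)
Your argument is correct and follows essentially the same route as the paper: multiply by $u$, use positivity of $a$ and the Friedrichs/Poincar\'e inequality, the pointwise bound $|f(u).u|\le f(u).u+2C$, and Gronwall. The only cosmetic slip is writing $|(f(u),u)|$ where you mean $(|f(u).u|,1)=\int_\Omega|f(u).u|\,dx$; the pointwise reasoning you give makes clear you intend the latter, which is what the estimate \eqref{1.l2} requires.
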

\begin{proof} We multiply equation \eqref{1.main} by $u$ and integrate over $x$. This gives
$$
\frac12\frac d{dt}\|u\|^2_{L^2}+(a\Nx u,\Nx u)+(f(u),u)=(g,u),
$$
where $(u,v):=\int_\Omega u(x).v(x)\,dx$ is a standard inner product in $H$. Using the
 dissipativity assumption $f(u).u\ge-C$ and positivity of the matrix $a$ together with the
 Friedrichs inequality,
 we arrive at
$$
\frac12\frac d {dt}\|u\|^2_{L^2}+\alpha\|u\|^2_{L^2}+\alpha\|\Nx u\|^2_{L^2}+
(|f(u).u|,1)\le C(\|g\|^2_{L^2}+1)
$$
for some positive constants $C$ and $\alpha$. The
 Gronwall inequality applied to this relation gives \eqref{1.l2} and finishes the proof of the lemma.
\end{proof}
The next lemma gives the analogous dissipative estimate for the $H^1$-norm of the solution.
\begin{lemma}\label{Lem1.h1} Let the assumptions of Lemma \ref{Lem1.l2} hold and $u$ be a
 sufficiently regular solution of \eqref{1.main}. Then, the following estimate is valid:
%$$
\begin{multline}\label{1.h1}
\|u(T)\|_{H^1}^2+\int_T^{T+1}\|u(t)\|_{H^2}^2\,dt+\\+
\int_T^{T+1}(|f'(u)\Nx u(t).\Nx u(t)|,1)\,dt\le Ce^{-\alpha T}\|u_0\|^2_{H^1}+C(\|g\|^2_{L^2}+1),
\end{multline}
%$$
where the positive constants $C$ and $\alpha$ are independent of $t$ and $u_0$.
\end{lemma}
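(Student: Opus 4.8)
The plan is to mimic the proof of Lemma \ref{Lem1.l2}, but now multiply equation \eqref{1.main} by $-\Dx u$ instead of by $u$. Formally integrating by parts over $\Omega$ (which is legitimate for sufficiently regular solutions, since $u\big|_{\partial\Omega}=0$ and $\Dt u\big|_{\partial\Omega}=0$), one obtains the $H^1$-energy identity \eqref{0.h1}:
$$
\frac12\frac d{dt}\|\Nx u\|^2_{L^2}+(a\Dx u,\Dx u)+(f'(u)\Nx u,\Nx u)=-(g,\Dx u).
$$
Here the crucial identity is $(f(u),-\Dx u)=(f'(u)\Nx u,\Nx u)$, which follows from the chain rule $\Nx(f(u))=f'(u)\Nx u$ together with integration by parts. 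The positivity assumption \eqref{1.a} on $a$ gives $(a\Dx u,\Dx u)\ge\beta\|\Dx u\|^2_{L^2}$ for some $\beta>0$, and the monotonicity assumption $f'(u)\ge-K$ from \eqref{1.f} gives $(f'(u)\Nx u,\Nx u)\ge-K\|\Nx u\|^2_{L^2}$.

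Next I would absorb the bad terms. The right-hand side is estimated by $|(g,\Dx u)|\le\frac\beta2\|\Dx u\|^2_{L^2}+C\|g\|^2_{L^2}$, leaving $\frac\beta2\|\Dx u\|^2_{L^2}$ on the good side. The term $-K\|\Nx u\|^2_{L^2}$ is controlled using elliptic regularity / interpolation: $\|\Nx u\|^2_{L^2}\le\delta\|\Dx u\|^2_{L^2}+C_\delta\|u\|^2_{L^2}$ (this is just interpolation of $H^1$ between $L^2$ and $H^2$ together with the fact that $\|\Dx u\|_{L^2}$ is equivalent to the $H^2$-norm on $H^2\cap H^1_0$). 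Choosing $\delta$ small enough we retain a positive fraction of $\|\Dx u\|^2_{L^2}$, at the cost of an extra term $C\|u\|^2_{L^2}$ which is already controlled by the dissipative $L^2$-estimate \eqref{1.l2} of Lemma \ref{Lem1.l2}. This yields a differential inequality of the form
$$
\frac d{dt}\|\Nx u\|^2_{L^2}+\gamma\|\Nx u\|^2_{L^2}+\gamma\|\Dx u\|^2_{L^2}+(|f'(u)\Nx u.\Nx u|,1)\le C(\|g\|^2_{L^2}+1)+C\|u\|^2_{L^2}.
$$
Note that to recover the full integral term $(|f'(u)\Nx u.\Nx u|,1)$ with absolute value inside, one writes $(f'(u)\Nx u,\Nx u)=(|f'(u)\Nx u.\Nx u|,1)-2((f'(u)\Nx u.\Nx u)_-,1)$ and uses $(f'(u)\Nx u.\Nx u)_-\le K|\Nx u|^2$ to move this last term to the right-hand side, where it is again absorbed by the interpolation trick.

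Finally I would apply Gronwall's inequality to this differential inequality; using the Friedrichs inequality $\|u\|_{H^1}^2$ is equivalent to $\|\Nx u\|_{L^2}^2$ on $H^1_0$, this produces the pointwise decay $\|u(T)\|_{H^1}^2\le Ce^{-\alpha T}\|u_0\|_{H^1}^2+C(\|g\|_{L^2}^2+1)$ after integrating the $\|u\|_{L^2}^2$ forcing term via \eqref{1.l2}. Integrating the differential inequality over $[T,T+1]$ and using the already-established bound on $\|u(T)\|_{H^1}^2$ gives the two remaining integral terms in \eqref{1.h1}. The main subtlety — rather than a genuine obstacle at the level of a priori estimates — is the justification of the integration by parts $(f(u),\Dx u)=-(f'(u)\Nx u,\Nx u)$: this requires enough regularity of $u$ and integrability of $f'(u)|\Nx u|^2$, which is exactly why the lemma is stated for "sufficiently regular" solutions and why the rigorous justification is deferred (it will be carried out via the approximation scheme for strong solutions in section \ref{s.s}, and more delicate versions of this formula are discussed in section \ref{s9}). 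At the formal level, all steps are routine.
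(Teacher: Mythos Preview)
Your proof is correct and follows essentially the same approach as the paper: multiply \eqref{1.main} by $-\Dx u$, use positivity of $a$ and $f'(u)\ge -K$ to obtain a differential inequality, then apply Gronwall together with the $L^2$-estimate of Lemma~\ref{Lem1.l2}. The only cosmetic difference is that the paper keeps the term $C\|\Nx u\|^2_{L^2}$ on the right-hand side and controls its time integral directly via \eqref{1.l2}, whereas you first interpolate it down to $C\|u\|^2_{L^2}$; both variants lead to the same conclusion.
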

\begin{proof} We multiply equation \eqref{1.main} by $-\Dx u$ and integrate over $x$ to get
$$
\frac12\frac d{dt}\|\Nx u\|^2_{L^2}+(a\Dx u,\Dx u)+(f'(u)\Nx u,\Nx u)=-(g,\Dx u)
$$
Using the inequality $f'(u)\ge-K$ and positivity of matrix $a$ again, we arrive at
%$$
\begin{multline}
\frac d{dt}\|\Nx u\|^2_{L^2}+\alpha\|\Nx u\|^2_{L^2}+\alpha\|\Dx u\|^2_{L^2}+\\+(|f'(u)\Nx u.\Nx u|,1)
\le C(\|g\|^2_{L^2}+\|\Nx u\|^2_{L^2}).
\end{multline}
%$$
Applying the Gronwall inequality to this relation and using \eqref{1.l2} in order to control the integral
of $\|\Nx u\|^2_{L^2}$, we end up with \eqref{1.h1} and finish the proof of the lemma.
\end{proof}
Let now $\theta=\Dt u$. Then this function solves
%$$
\begin{equation}\label{1.theta}
\Dt\theta=a\Dx\theta-f'(u)\theta,\ \ \theta\big|_{t=0}=a\Dx u_0-f(u_0)+g.
\end{equation}
%$$
The next lemma gives the $L^2$-estimate for the time derivative $\theta$.
\begin{lemma}\label{Lem1.theta} Let the assumptions of Lemma \ref{Lem1.l2} hold and let
$u$ be a sufficiently regular solution of equation \eqref{1.main}. Then the following estimate is valid:
%$$
\begin{multline}\label{1.l2h}
\|\theta(T)\|^2_{L^2}+\int_T^{T+1}\|\theta(t)\|_{H^1}^2\,dt+\\+
\int_{T}^{T+1}(|f'(u)\theta(t).\theta(t)|,1)\,dt\le C\|u_0\|_{\mathbb D}^2e^{K_1T}+Ce^{K_1 T}(\|g\|^2+1),
\end{multline}
%$$
where positive constants $C$ and $K_1$ are independent of $t$ and $u_0$.
\end{lemma}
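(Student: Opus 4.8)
The plan is to carry out an $L^2$-energy estimate directly on the equation \eqref{1.theta} for $\theta=\Dt u$, using the monotonicity condition $f'(u)\ge -K$ in exactly the same spirit as in Lemma \ref{Lem1.h1}, and then to handle the initial value $\theta(0)=a\Dx u_0-f(u_0)+g$ by the definition of the $\Bbb D$-norm. First I would multiply \eqref{1.theta} by $\theta$ and integrate over $\Omega$, obtaining
$$
\frac12\frac d{dt}\|\theta\|^2_{L^2}+(a\Nx\theta,\Nx\theta)+(f'(u)\theta,\theta)=0.
$$
Using $a+a^*>0$ to get $(a\Nx\theta,\Nx\theta)\ge\alpha\|\Nx\theta\|^2_{L^2}$ together with the Friedrichs inequality, and moving the term $(f'(u)\theta,\theta)$ to the right via $f'(u)\ge -K$ (so that $(f'(u)\theta,\theta)\ge -K\|\theta\|^2_{L^2}$, but also splitting off its absolute value $(|f'(u)\theta.\theta|,1)$ on the left at the cost of a $2K\|\theta\|^2_{L^2}$ term on the right, as was done in the previous two lemmas), I arrive at a differential inequality of the form
$$
\frac d{dt}\|\theta\|^2_{L^2}+\alpha\|\theta\|^2_{H^1}+(|f'(u)\theta.\theta|,1)\le K_1\|\theta\|^2_{L^2}
$$
for a suitable constant $K_1$ (roughly $K_1\sim 2K$).

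Next I would apply the Gronwall inequality to this relation on $[0,T]$, which yields
$$
\|\theta(T)\|^2_{L^2}+\int_0^T e^{K_1(T-s)}\Big(\alpha\|\theta(s)\|^2_{H^1}+(|f'(u)\theta.\theta|,1)\Big)\,ds\le e^{K_1 T}\|\theta(0)\|^2_{L^2};
$$
in particular, integrating over the window $[T,T+1]$ and absorbing the bounded factor $e^{K_1}$ gives the left-hand side of \eqref{1.l2h} bounded by $Ce^{K_1 T}\|\theta(0)\|^2_{L^2}$. It then remains only to estimate $\|\theta(0)\|_{L^2}=\|a\Dx u_0-f(u_0)+g\|_{L^2}$. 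By the triangle inequality this is controlled by $\|a\|\,\|\Dx u_0\|_{L^2}+\|f(u_0)\|_{L^2}+\|g\|_{L^2}$, and since $\|\Dx u_0\|_{L^2}\le C\|u_0\|_{H^2}$ and, by the very definition \eqref{1.d} of the space $\Bbb D$, $\|u_0\|_{H^2}^2+\|f(u_0)\|_{L^2}^2=\|u_0\|_{\Bbb D}^2$, we obtain $\|\theta(0)\|_{L^2}^2\le C(\|u_0\|_{\Bbb D}^2+\|g\|_{L^2}^2)$. Substituting this into the previous bound produces exactly \eqref{1.l2h}.

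I do not expect a genuine obstacle here at the formal level — the computation is parallel to Lemmas \ref{Lem1.l2} and \ref{Lem1.h1}, except that we no longer gain dissipativity (the estimate is allowed to grow like $e^{K_1 T}$, since the sign of $f'(u)$ is only controlled from below by $-K$, not from above). The only point requiring a little care is the justification of the formal multiplication of \eqref{1.theta} by $\theta=\Dt u$ and the integration by parts $(a\Dx\theta,\theta)=-(a\Nx\theta,\Nx\theta)$ together with the handling of the nonlinear term $(f'(u)\theta,\theta)$ — but since the lemma is stated for \emph{sufficiently regular} solutions (and the rigorous justification is deferred to the approximation scheme in section \ref{s.s}), at this stage these manipulations are legitimate. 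The mild subtlety is that $\theta(0)$ need not lie in $H^1_0$, so one should not attempt to run the $H^1$-version of this estimate; the $L^2$-estimate stated is exactly what the regularity of the data permits.
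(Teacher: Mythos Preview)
Your proposal is correct and follows essentially the same approach as the paper: multiply \eqref{1.theta} by $\theta$, use $a+a^*>0$ and $f'(u)\ge -K$ to obtain the differential inequality, then apply Gronwall and estimate $\|\theta(0)\|_{L^2}$ via the $\Bbb D$-norm of $u_0$. The paper's proof is in fact terser than yours---it omits the explicit bound on $\|\theta(0)\|_{L^2}$---so your write-up is, if anything, more complete.
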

\begin{proof} We multiply equation \eqref{1.theta} by $\theta$ and use assumption
$f'(u)\ge-K$ and positivity of matrix $a$
to get
$$
\frac d{dt}\|\theta\|^2_{L^2}+\alpha\|\Nx \theta\|^2_{L^2}+(|f'(u)\theta.\theta|,1)\le
 2K\|\theta\|^2_{L^2}.
$$
Applying the Gronwall inequality to this relation, we get the desired estimate and finish
the proof of the lemma.
\end{proof}
As a corollary of this lemma, we get the key control for the norm of the solution in the space $\Bbb D$.
\begin{corollary}\label{Cor1.d} Let the assumptions of Lemma \ref{Lem1.l2} hold and let
 $u$ be a sufficiently regular solution of \eqref{1.main}. Then the following estimate is valid:
%$$
\begin{equation}\label{1.D}
\|u(T)\|_{\mathbb D}^2\le Ce^{K_1 T}\|u_0\|^2_{\mathbb D}+Ce^{K_1T}(1+\|g\|^2_{L^2}),
\end{equation}
%$$
where the positive constants $C$ and $K_1$ are independent of $T$ and $u_0$.
\end{corollary}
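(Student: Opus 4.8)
The plan is to read \eqref{1.main} at a fixed time $T$ as a nonlinear elliptic equation for $u=u(T)$ and to split the norms of $\Dx u$ and $f(u)$ with the help of the monotonicity assumption, controlling the right-hand side by the $L^2$-bound for $\theta=\Dt u$ already furnished by Lemma \ref{Lem1.theta}. Concretely, set $h:=\theta(T)-g$, so that $u=u(T)$ solves
$$
a\Dx u-f(u)=h,\qquad u\big|_{\partial\Omega}=0,
$$
and, by estimate \eqref{1.l2h}, $\|h\|^2_{L^2}\le 2\|\theta(T)\|^2_{L^2}+2\|g\|^2_{L^2}\le Ce^{K_1T}\(\|u_0\|_{\Bbb D}^2+\|g\|^2_{L^2}+1\)$.

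The main step is the elliptic a priori estimate for this system. Multiplying the equation by $-\Dx u$ and integrating over $\Omega$ gives
$$
(a\Dx u,\Dx u)=(f(u),\Dx u)-(h,\Dx u).
$$
By \eqref{1.a} the left-hand side is bounded below by $\alpha\|\Dx u\|^2_{L^2}$; the integration-by-parts identity $(f(u),\Dx u)=-(f'(u)\Nx u,\Nx u)$ (legitimate for a sufficiently regular solution) together with $f'(u)\ge-K$ yields $(f(u),\Dx u)\le K\|\Nx u\|^2_{L^2}$; and $\|\Nx u\|^2_{L^2}=-(\Dx u,u)\le\frac{\alpha}{4K}\|\Dx u\|^2_{L^2}+\frac{K}{\alpha}\|u\|^2_{L^2}$, while $|(h,\Dx u)|\le\frac{\alpha}{4}\|\Dx u\|^2_{L^2}+\frac1\alpha\|h\|^2_{L^2}$. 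Absorbing the $\Dx u$-terms on the left I obtain $\|\Dx u\|^2_{L^2}\le C\(\|u\|^2_{L^2}+\|h\|^2_{L^2}\)$. Standard elliptic regularity for the Dirichlet Laplacian on the smooth bounded domain $\Omega$ then gives $\|u\|^2_{H^2}\le C\(\|\Dx u\|^2_{L^2}+\|u\|^2_{L^2}\)$, and finally $\|f(u)\|_{L^2}=\|a\Dx u-h\|_{L^2}\le C\(\|\Dx u\|_{L^2}+\|h\|_{L^2}\)$, so that altogether
$$
\|u(T)\|^2_{\Bbb D}=\|u(T)\|^2_{H^2}+\|f(u(T))\|^2_{L^2}\le C\(\|u(T)\|^2_{L^2}+\|h\|^2_{L^2}\)\le C\(\|u(T)\|^2_{L^2}+\|\theta(T)\|^2_{L^2}+\|g\|^2_{L^2}\).
$$

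It remains to insert the dissipative $L^2$-estimate \eqref{1.l2} of Lemma \ref{Lem1.l2}, which bounds $\|u(T)\|^2_{L^2}$ by $C\|u_0\|^2_{L^2}+C(\|g\|^2_{L^2}+1)\le C\|u_0\|^2_{\Bbb D}+C(\|g\|^2_{L^2}+1)$, and the bound for $\|\theta(T)\|^2_{L^2}$ recalled above; combining the three displays gives exactly \eqref{1.D}. The only genuinely delicate point is the elliptic splitting of $\|\Dx u\|_{L^2}$ and $\|f(u)\|_{L^2}$, which is where the monotonicity hypothesis $f'(u)\ge-K$ and the positivity of $a$ are essential, together with the integration-by-parts identity $(f(u),\Dx u)=-(f'(u)\Nx u,\Nx u)$; its rigorous justification for the relevant class of solutions is deferred, like the rest of this section, to the existence theory of the later sections, and is harmless for the ``sufficiently regular'' solutions considered here.
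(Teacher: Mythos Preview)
Your proof is correct and follows essentially the same route as the paper: freeze time, read \eqref{1.main} as the elliptic problem $a\Dx u(T)-f(u(T))=\theta(T)-g$, multiply by $\Dx u(T)$, use $f'(u)\ge-K$ and positivity of $a$ to bound $\|u(T)\|_{H^2}$, recover $\|f(u(T))\|_{L^2}$ from the equation, and plug in the $\theta$-estimate \eqref{1.l2h}. The only cosmetic differences are a harmless sign slip in your displayed identity (it should read $+(h,\Dx u)$, which is irrelevant since you estimate $|(h,\Dx u)|$) and that you absorb the lower-order term $\|\Nx u\|^2_{L^2}$ by interpolation against $\|u\|^2_{L^2}$ and invoke \eqref{1.l2}, whereas the paper simply cites the $H^1$-estimate \eqref{1.h1} directly.
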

\begin{proof} We rewrite equation \eqref{1.main} as an elliptic problem
%$$
\begin{equation}\label{1.ell}
a\Dx u(T)-f(u(T))=\tilde g(T):=\theta(T)-g
\end{equation}
%$$
for every fixed $T$. Multiplying then this equation by $\Dx u(T)$ (without integration in time!)
 and using the control for $\theta(T)$
 obtained above (together with the elliptic
 regularity estimate for the Laplacian and the assumption $f'(u)\ge-K$), we arrive at the estimate
$$
\|u(T)\|_{H^2}^2\le C(\|g\|^2_{L^2}+\|\theta(T)\|_{L^2}^2+\|\Nx u(T)\|_{H}^2).
$$
Expressing the $L^2$-norm of $f(u)$ from equation \eqref{1.ell}, we get
$$
\|u(T)\|^2_{\Bbb D}\le C(\|g\|^2_{L^2}+\|\theta(T)\|_{L^2}^2+\|\Nx u(T)\|_{H}^2).
$$
Estimating the right-hand side of this inequality by \eqref{1.l2h} and \eqref{1.h1} and using that
$$
\theta(0)=\Dt u(0)=a\Dx u(0)-f(u(0))+g,
$$
we arrive at the desired estimate and finish the proof of the corollary.
\end{proof}
\begin{remark}
Note that, in contrast to estimates for the $L^2$ and $H^1$
norms of the solution $u(t)$, the obtained estimate for the $\mathbb D$-norm of $u(t)$ is {\it not}
dissipative and even growth exponentially in time. We will remove this drawback
 later (under some extra assumptions on $f$).
\end{remark}
We conclude this section by establishing the global Lipschitz continuity
 with respect to the initial data which plays a crucial role in constructing
  weak solutions for \eqref{1.main}.
\begin{lemma}\label{Lem1.lip} Let the assumptions of Lemma \ref{Lem1.l2} hold and let $u_1(t)$
and $u_2(t)$ be two sufficiently regular solutions of equation \eqref{1.main}. Then the
 following estimate is valid:
%$$
\begin{multline}\label{1.lip}
\|u_1(T)-u_2(T)\|^2_{L^2}+\\+\int_0^T\|u_1(t)-u_2(t)\|^2_{H^1}\,dt\le Ce^{K_1 T}\|u_1(0)-u_2(0)\|^2_{L^2},
\end{multline}
%$$
where the positive constants $C$ and $K_1$ are independent of $T$, $u_1$ and $u_2$.
\end{lemma}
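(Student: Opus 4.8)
The plan is to write the equation for the difference $w(t) = u_1(t) - u_2(t)$, which satisfies
\[
\Dt w = a\Dx w - \big(f(u_1) - f(u_2)\big), \qquad w\big|_{\partial\Omega} = 0, \qquad w\big|_{t=0} = u_1(0) - u_2(0).
\]
First I would multiply this equation by $w$ and integrate over $\Omega$, obtaining
\[
\frac12\frac{d}{dt}\|w\|_{L^2}^2 + (a\Nx w, \Nx w) + \big(f(u_1) - f(u_2), w\big) = 0.
\]
The positivity assumption \eqref{1.a} gives $(a\Nx w, \Nx w) \ge \alpha \|\Nx w\|_{L^2}^2$ for some $\alpha>0$, so the only term that needs care is the nonlinear one.

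The key point is that the monotonicity assumption $f'(u)\ge -K$ yields, via the mean value theorem applied pointwise,
\[
\big(f(u_1(x)) - f(u_2(x))\big)\cdot\big(u_1(x) - u_2(x)\big) = \Big(\int_0^1 f'\big(s u_1(x) + (1-s) u_2(x)\big)\,ds\; w(x)\Big)\cdot w(x) \ge -K|w(x)|^2,
\]
so that $\big(f(u_1) - f(u_2), w\big) \ge -K\|w\|_{L^2}^2$. Substituting this back and dropping the nonnegative gradient term on the left where convenient, I get
\[
\frac{d}{dt}\|w\|_{L^2}^2 + 2\alpha\|\Nx w\|_{L^2}^2 \le 2K\|w\|_{L^2}^2.
\]
Then I would apply the Gronwall inequality in differential form: $\|w(T)\|_{L^2}^2 \le e^{2KT}\|w(0)\|_{L^2}^2$, which is the first part of \eqref{1.lip} with $K_1 = 2K$. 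For the integral term, integrate the differential inequality over $[0,T]$, move the $\|w\|_{L^2}^2$ term to the right, and bound $\int_0^T \|w(t)\|_{L^2}^2\,dt$ using the pointwise Gronwall bound already obtained; since $\|\Nx w\|_{L^2}$ controls $\|w\|_{H^1}$ up to a constant by the Friedrichs inequality (with Dirichlet boundary conditions), this gives $\int_0^T \|w(t)\|_{H^1}^2\,dt \le C e^{K_1 T}\|w(0)\|_{L^2}^2$ after absorbing constants.

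The main obstacle here is not analytic depth but justification: as with the other lemmas in this section, the computation is carried out for ``sufficiently regular'' solutions, so the formal multiplications are legitimate; the genuinely delicate point — deferred, as the preceding lemmas are — is that all quantities involved ($f(u_i)$ in $L^2$, the time derivatives) are finite for strong solutions, and that the same Lipschitz estimate then passes to weak solutions by density, which is precisely why this lemma is flagged as playing a crucial role in the construction of weak solutions in section \ref{s5}. Within the stated ``sufficiently regular'' framework the proof is essentially the three displayed lines above plus Gronwall, with no real difficulty.
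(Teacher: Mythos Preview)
Your proof is correct and follows exactly the same approach as the paper: write the equation for the difference, multiply by the difference, use the monotonicity bound $[f(u_1)-f(u_2)]\cdot[u_1-u_2]\ge -K|u_1-u_2|^2$, and apply Gronwall. The paper's version is terser (it defers the Gronwall step by reference to the proof of Lemma~\ref{Lem1.theta}), but the argument is identical.
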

\begin{proof} Indeed, let $v(t)=u_1(t)-u_2(t)$. Then this function solves
%$$
\begin{equation}
\Dt v=a\Dx v-[f(u_1)-f(u_2)].
\end{equation}
%$$
Multiplying this equation by $v$, using that, due to the monotonicity assumption $f'(u)\ge -K$,
$$
[f(u_1)-f(u_2)].[u_1-u_2]\ge -K|v|^2,
$$
and arguing as in the proof of Lemma \ref{Lem1.theta},
we arrive at the desired estimate.
\end{proof}

\section{Existence of strong solutions}\label{s.s}

 Although the construction of a solution from a priori estimates obtained above is more
 or less standard, it is a bit delicate here since the phase
 space $\mathbb D$ is in general {\it nonlinear} and, particularly, it is not clear whether or not
  smooth functions are dense in $\Bbb D$. By this reason, we sketch the proof here.
\par
We expect that the existence result can be also obtained using the monotone operators theory
(e.g., the standard Ioshida approximations), but we prefer to give an alternative,
 a bit more transparent proof.
  Our idea is to approximate the nonlinearity $f$ by a sequence $f_n$ of functions of sub-linear growth
  rate without destroying assumptions \eqref{1.f}. Then, on the one hand, the existence of
 solutions for such $f_n$ is well-known and, on the other hand, as not difficult to see,
 all estimates obtained above will be uniform with respect to $n$. Thus, it will only
  remain to pass to the limit $n\to\infty$. We start with the approximation of $f$.

\begin{lemma}\label{Lem1.fn} Let the function $f$ satisfy assumptions \eqref{1.f}.
 Then, there exists a sequence of functions $f_n\in C^1(\R^k,\R^k)$ such that
%$$
\begin{equation}\label{1.fn}
1.\ \ f_n(u).u\ge-C,\ \ 2.\ \ f_n'(u)\ge-K
\end{equation}
%$$
uniformly with respect to $n$. Moreover,
%$$
\begin{equation}
f_n\to f
\end{equation}
%$$
in $C_{loc}(\R^k,\R^k)$ and
%$$
 \begin{equation}
 |f_n'(u)|\leq C_n,\ \ u\in\R^k,
\end{equation}
where the constant $C_n$ may depend on $n$.
\end{lemma}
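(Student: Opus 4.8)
The plan is to construct $f_n$ by a smooth truncation of $f$ at large $|u|$, replacing $f$ outside a ball of radius $n$ by a linear extension that preserves both structural conditions in \eqref{1.f}. The naive idea of simply multiplying $f$ by a cut-off function fails because it destroys the monotonicity bound $f'(u)\ge-K$, so instead I would interpolate at the level of the derivative. Concretely, fix a smooth scalar cut-off $\chi\in C^\infty(\R,[0,1])$ with $\chi(s)=1$ for $s\le 1$ and $\chi(s)=0$ for $s\ge 2$, and set $\chi_n(u):=\chi(|u|^2/n^2)$. Then define
\begin{equation}
f_n(u):=\chi_n(u)f(u)+(1-\chi_n(u))\,(-K)u.
\end{equation}
For $|u|\le n$ we have $f_n=f$, which immediately gives $f_n\to f$ in $C_{loc}$; and for $|u|\ge\sqrt2\,n$ we have $f_n(u)=-Ku$, so $f_n$ has (sub)linear growth and $|f_n'(u)|\le C_n$ is automatic, the constant depending on $n$ through the transition region where $f$ and its derivative are bounded by compactness.

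The substantive points are verifying that the two conditions \eqref{1.fn} hold uniformly in $n$. For the dissipativity bound, $f_n(u).u=\chi_n(u)\,f(u).u+(1-\chi_n(u))(-K)|u|^2$; the first term is $\ge-C\chi_n(u)\ge-C$ by \eqref{1.f}, and the second term equals $-K|u|^2(1-\chi_n(u))$ which is only supported on $|u|\ge n$ and there is $\le0$ --- wait, it is negative of order $n^2$, which is \emph{not} bounded below. This forces a correction: in the transition/exterior region I must replace $-Ku$ by something whose inner product with $u$ is bounded below, e.g.\ use $-K\psi_n(u)u$ where $\psi_n$ is a radial cut-off making the modified field vanish for $|u|$ large, or better, define the exterior behaviour as $0$ rather than $-Ku$ and instead interpolate at the level of $f'$ (integrating back up) so that monotonicity is preserved by construction while growth is killed. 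The cleanest route is: let $g_n(u):=f'(u)+K$ (so $g_n\ge0$ as a matrix field by \eqref{1.f}), multiply it by the cut-off $\chi_n$ to get a nonnegative matrix field of compact support, and reconstruct $f_n$ by integrating $f_n'(u):=\chi_n(u)(f'(u)+K)-K\,\mathrm{Id}$ along rays from the origin with $f_n(0):=f(0)$.

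With that construction, $f_n'(u)=\chi_n(u)f'(u)+(\chi_n(u)-1)K\,\mathrm{Id}$, which is $\ge-K$ since $\chi_n(u)f'(u)\ge-\chi_n(u)K\ge-K$ on the support and $(\chi_n-1)K\ge-K$ always --- more carefully, $f_n'(u)\xi.\xi=\chi_n(u)(f'(u)\xi.\xi+K|\xi|^2)-K|\xi|^2\ge-K|\xi|^2$ using $f'(u)\xi.\xi\ge-K|\xi|^2$ and $\chi_n\ge0$. Outside $|u|\ge\sqrt2 n$ this gives $f_n'(u)=-K\,\mathrm{Id}$, so $f_n(u)=-Ku+c_n$ with $c_n$ a constant vector, hence linear growth and the $n$-dependent bound on $f_n'$. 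The dissipativity $f_n(u).u\ge-C$ uniformly is the last thing to check: write $f_n(u)=\int_0^1 f_n'(su)u\,ds+f(0)$ and split into the region $|u|\le n$ (where $f_n=f$, done), and $|u|>n$ (where one estimates $f_n(u).u$ using that along the ray the integrand is $f_n'(su)u.u$; on the inner part $s|u|\le n$ it equals $f'(su)u.u\ge\dots$ and on the outer part it equals $-K|u|^2$, and the competition is handled because the inner part contributes a term that grows like the original $f(u).u$ evaluated at the boundary, which dominates). The main obstacle I expect is precisely this uniform-in-$n$ verification of condition~1 in the transition regime: one needs the construction to be arranged so that the ``bad'' linear-in-$n^2$ negative contribution from the truncated tail is cancelled against the genuine growth of $f(u).u$ inherited from $|u|\le n$. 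I would resolve it by, if necessary, first replacing $f$ by $f+\lambda u$ for large $\lambda$ (harmless, as it preserves \eqref{1.f} and can be subtracted at the end), which makes $f(u).u$ genuinely coercive and makes the tail estimate routine.
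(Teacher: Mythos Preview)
Your construction has two genuine gaps.

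\textbf{First}, for systems ($k>1$) the step ``reconstruct $f_n$ by integrating $f_n'(u):=\chi_n(u)(f'(u)+K)-K\,\mathrm{Id}$ along rays'' does not do what you claim. A matrix field $M(u)$ is the Jacobian of some map only if it satisfies the integrability conditions $\partial_{u_l}M_{ij}=\partial_{u_j}M_{il}$; your $M$ contains cross terms $\partial_{u_l}\chi_n\cdot\bigl((f')_{ij}+K\delta_{ij}\bigr)$ which are not symmetric in $j,l$. Thus the $f_n$ you obtain by ray integration will \emph{not} have Jacobian equal to $M$, and your verification of $f_n'\ge-K$ is computing the wrong object.

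\textbf{Second}, and more fundamentally, even in the scalar case (where the first issue disappears) your $f_n$ satisfies $f_n'(u)=-K$ for $|u|\ge\sqrt2\,n$, hence $f_n(u)=-Ku+c_n$ there, and $f_n(u)\cdot u=-K|u|^2+c_n\cdot u\to-\infty$: the dissipativity bound fails badly, uniformly in $n$. Your own computation already shows this (the ``inner part'' contributes a quantity of size $f(w)\cdot w$ at a fixed point $|w|=n$, which cannot compensate $-K|u|^2$ for $|u|\gg n$). The proposed fix ``replace $f$ by $f+\lambda u$ and subtract at the end'' does not help: subtracting $\lambda u$ from the approximants gives back $f_n(u)\cdot u\le \tilde f_n(u)\cdot u-\lambda|u|^2$, which is again unbounded below.

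The paper avoids both problems by a different idea: rather than cutting off $f'$, it \emph{adds} a small radial monotone gradient term $\eb\nabla_u\Psi(|u|^2)$ with $\Psi$ convex and growing faster than $f$. This term has nonnegative inner product with $u$ and nonnegative Jacobian, so both structural bounds are preserved. Because it dominates $f$ at infinity, one can then cut off $f$ by a factor $\theta_{R_\eb}(|u|^2)$ without destroying \eqref{1.f}; finally one linearizes $\Psi'$ (not $f$) beyond a large radius by truncating $\Psi''$, which keeps $\Psi$ convex and makes $f_\eb$ linear at infinity. The gradient structure sidesteps the integrability issue entirely, and the positivity of $\Psi'$ guarantees $f_\eb(u)\cdot u\ge-C$ automatically.
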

\begin{proof}[Sketch of the proof] Let us first introduce a smooth  scalar convex
function $\Psi(z)$ (we may also require that $\Psi'(s)\geq 0$ as $s\geq 0$)
in such a way that $\nabla_u\Psi(|u|^2)$ grows faster than $|f(u)|$ as $|u|\to\infty$. Then,
 for every fixed $\eb>0$, we
consider the function $\bar f_\eb(u)=f(u)+\eb\nabla_u\Psi(|u|^2)$. Since the second term will dominate
 the first one if $|u|^2$ is large enough, introducing the first cut-off function $\theta_R(|u|^2)$ such
 that $\theta_R=1$ is $|u|\le R$ and zero if $|u|\ge 2R$, we may find
find $R=R_\eb\ge\frac1\eb$ such that the function
$$
\bar f_{\varepsilon} (u):=\theta_{R_\eb}(|u|^2)f(u)+\ \eb \nabla_u \Psi(|u|^2)
 $$
 satisfies assumptions \eqref{1.f} uniformly with respect to $u\in \mathbb{R}^k$.
  Note that $\bar f_\eb(u)= \eb \nabla_u \Psi(|u|^2)= \eb \Psi'(|u|^2)u$ for $|u|^2\ge2R_\eb$
   and we may make it linear for $|u|^2\ge3R_\eb$ by cutting-off
   $\Psi''$ on the interval $2R_\eb\leq |u|^2\le3R_\eb$. For instance, we may
    introduce another  cut-off function  $\tilde{\theta}_{R_\eb}(\tau)=1$ as
    $\tau\leq 2R_\eb$
    and zero if $\tau\geq 3R_\eb$ and define
    $$
    \Psi'_{R_\eb}(s):=\int_0^{s}\tilde{\theta}_{R_\eb }(\tau)\Psi''(\tau)d\tau+\Psi'(0).
    $$
    This gives
    $$
    f_\eb(u)=\theta_{R_\eb}(|u|^2)f(u)+ \eb \nabla_u\Psi_{R_\eb}(|u|^2).
    $$
Taking finally $\varepsilon =\varepsilon _n=\frac{1}{n}$, we get the desired approximating sequence.
\end{proof}
We now introduce the approximating system for \eqref{1.main}
%$$
\begin{equation}\label{1.main-app}
\Dt u=a\Dx u-f_n(u)+g,\ \ u\big|_{t=0}=u_0^n,\ \ u\big|_{\partial\Omega}=0,
\end{equation}
%$$
where the functions $f_n$ are constructed in Lemma \ref{Lem1.fn}. However, the choice of
 the approximating initial data $u_0^n$ requires some accuracy. Indeed, we cannot
 just fix $u_0^n=u_0$ since $\|f_n(u_0)\|_{L^2}$ will be not uniformly bounded and, as
 a result, we may lose the estimate of the $\mathbb D$-norm of the limit solution. Instead, we
  define $u_0^n$ as a solution of the following auxiliary elliptic problem:
%$$
\begin{equation}\label{1.ell1}
a\Dx v-f_n(v)-Kv=G:=a\Dx u_0+f(u_0)-Ku_0,\ \ v\big|_{\partial\Omega}=0.
\end{equation}
%$$
The next lemma gives useful properties of the solutions of this auxiliary problem.
\begin{lemma}\label{Lem1.au} Let the functions $f_n$ be as above and $u_0\in\mathbb D$. Then,
for every fixed $n$, problem \eqref{1.ell1} has a unique solution $v=u_0^n$. Moreover,
$\|u_0^n\|_{H^2}$ and $\|f_n(u_0^n)\|_{L^2}$ are uniformly bounded as $n\to\infty$ and
%$$
\begin{equation}
u_0^n\rightharpoondown u_0,\ \ f_n(u_0^n)\rightharpoondown f(u_0)
\end{equation}
%$$
in the spaces $H^2$ and $L^2$ respectively.
\end{lemma}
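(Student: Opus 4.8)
The plan is to treat \eqref{1.ell1} via the theory of monotone operators and to get the uniform bounds by testing against the natural test functions. First I would establish existence and uniqueness for fixed $n$: the operator $v\mapsto -a\Dx v+f_n(v)+Kv$ with Dirichlet data is, thanks to $a+a^*>0$, the monotonicity $f_n'(u)\ge-K$ (which makes $f_n(v)+Kv$ monotone), and the fact that $|f_n'|\le C_n$ (so $f_n$ has at most linear growth), a Lipschitz perturbation of a coercive maximal monotone operator on $H^1_0(\Omega)$. Hence it is maximal monotone and coercive from $H^1_0$ to $H^{-1}$, so for the given right-hand side $G\in L^2(\Omega)$ there is a unique $v=u_0^n\in H^1_0(\Omega)$; the linear growth of $f_n$ plus elliptic regularity for $a\Dx$ then upgrades $u_0^n$ to $H^2(\Omega)\cap H^1_0(\Omega)$ with $f_n(u_0^n)\in L^2(\Omega)$. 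Alternatively one can run the same approximation-by-Yosida or Galerkin scheme that is used elsewhere in the section; the point is only that for each fixed $n$ the equation is non-degenerate because $f_n$ is essentially sublinear.

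Next I would derive the $n$-uniform bounds. Rewrite \eqref{1.ell1} as $a\Dx u_0^n-(f_n(u_0^n)+Ku_0^n)=G$ and test with $-\Dx u_0^n$. Using $(a\Dx u_0^n,\Dx u_0^n)\ge\alpha\|\Dx u_0^n\|_{L^2}^2$ and integrating by parts in the nonlinear term,
$$
-\big(f_n(u_0^n)+Ku_0^n,\,-\Dx u_0^n\big)=\big((f_n'(u_0^n)+K)\Nx u_0^n,\Nx u_0^n\big)\ge 0
$$
by the uniform monotonicity $f_n'+K\ge0$, so $\alpha\|\Dx u_0^n\|_{L^2}^2\le -(G,\Dx u_0^n)\le \|G\|_{L^2}\|\Dx u_0^n\|_{L^2}$, giving $\|u_0^n\|_{H^2}\le C\|G\|_{L^2}$ uniformly in $n$; note $\|G\|_{L^2}\le C\|u_0\|_{\Bbb D}$ by definition of $G$. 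Then $\|f_n(u_0^n)\|_{L^2}\le\|a\Dx u_0^n\|_{L^2}+K\|u_0^n\|_{L^2}+\|G\|_{L^2}$ is uniformly bounded as well. Consequently, along a subsequence, $u_0^n\rightharpoondown w$ in $H^2(\Omega)\cap H^1_0(\Omega)$ and $f_n(u_0^n)\rightharpoondown \chi$ in $L^2(\Omega)$; by compact embedding $u_0^n\to w$ strongly in $L^2$ and a.e., and since $f_n\to f$ in $C_{loc}$, dominated convergence on the level of a.e. limits combined with the weak-$L^2$ compactness identifies $\chi=f(w)$. Passing to the limit in \eqref{1.ell1} yields $a\Dx w-f(w)-Kw=G=a\Dx u_0+f(u_0)-Ku_0$; subtracting, testing the difference with $w-u_0$ and using $(f(w)+Kw)-(f(u_0)+Ku_0))\cdot(w-u_0)\ge0$ together with $(a\Dx(w-u_0),w-u_0)\le-\alpha\|\Nx(w-u_0)\|_{L^2}^2$ forces $w=u_0$. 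Since the limit is the same for every subsequence, the whole sequence converges, which gives the claimed weak convergences $u_0^n\rightharpoondown u_0$ in $H^2$ and $f_n(u_0^n)\rightharpoondown f(u_0)$ in $L^2$.

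The main obstacle is the identification $\chi=f(w)$ and the fact that $\Bbb D$ is nonlinear, so one cannot simply invoke lower semicontinuity of a norm: the argument has to go through a.e. convergence of $u_0^n$ plus the $C_{loc}$-convergence $f_n\to f$ and a uniform integrability / Vitali-type argument to pass $f_n(u_0^n)\to f(w)$ in $L^1_{loc}$, which together with the weak-$L^2$ bound pins down the weak-$L^2$ limit. A secondary technical point is justifying the integration by parts $(f_n(u_0^n),\Dx u_0^n)=-(f_n'(u_0^n)\Nx u_0^n,\Nx u_0^n)$ for the approximations; this is harmless here because each $f_n$ is $C^1$ with bounded derivative and $u_0^n\in H^2$, so the chain rule applies in $H^1$. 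Everything else is routine elliptic estimation.
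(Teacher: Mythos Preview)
Your argument is correct and follows essentially the same route as the paper: multiply the elliptic equation by $\Dx u_0^n$ (you wrote $-\Dx u_0^n$, which produces a harmless sign slip in the displayed inequality) to get the uniform $H^2$ bound via monotonicity of $f_n+K$, read off the $L^2$ bound on $f_n(u_0^n)$ from the equation, then identify the weak limit using a.e.\ convergence and uniqueness of the limit monotone problem. The paper's proof is slightly terser but uses exactly these steps.
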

\begin{proof} Indeed, the existence and uniqueness of
 a solution for \eqref{1.ell1} is obvious since $f_n(u)$
are monotone and have sublinear growth
rate. Let us prove uniform bounds.
 Indeed, multiplying \eqref{1.ell1}
 by $\Dx v=\Dx u_0^n$ and using the monotonicity, we get the estimate
$$
\|u_0^n\|_{H^2}^2\le C\|G\|^2_{L^2}\le C\|u_0\|^2_{\mathbb D},
$$
so $u_0^n$ is uniformly bounded in $H^2$. Expressing $f_n(v)$
from equation \eqref{1.ell1}, we see that $f_n(u_0^n)$ are also uniformly bounded.
\par
Let us verify the convergence. Since $u_0^n$ is uniformly bounded, passing to a subsequence if
necessary, we may assume that
$u_0^n\rightharpoondown w$ as $n\to\infty$ and $u_0^n\to w$ strongly to $w$ in $H^1$.
 Then, we have the convergence $u_0^n(x)\to w(x)$ almost everywhere. Moreover,
from this convergence and Lemma \ref{Lem1.fn}, we may conclude   that $f_n(u_0^n(x))\to f(w(x))$
almost everywhere. Since $f_n(u_0^n)$ are uniformly bounded in $L^2$, passing
 to a subsequence again, we infer that $f_n(u_0^n)\rightharpoondown f(w)$.
Passing after that to the weak limit $n\to\infty$
 in equations \eqref{1.ell1}, we see that the limit function $w$ solves
%$$
\begin{equation}\label{1.lim}
a\Dx w-f(w)-Kw=G=a\Dx u_0-f(u_0)-Ku_0,\ \ w\big|_{\partial\Omega}=0.
\end{equation}
%$$
Finally, since the solution $w\in\mathbb D$ of equation \eqref{1.lim} is unique (again
 due to the monotonicity of $f(u)+Ku$), we conclude that $w=u_0$. The uniqueness
 also gives that passing to a subsequence was not necessary and the whole
  sequence $u_0^n$ converges to $u_0$.
\end{proof}
We are now ready to state and prove the main result of this section.
\begin{theorem}\label{Th1.well} Let the nonlinearity $f$ and matrix $a$ satisfy
assumptions \eqref{1.f} and \eqref{1.a},
$g\in L^2$ and $u_0\in\mathbb D$. Then, problem \eqref{1.main} possesses a unique strong
solution $u(t)\in\mathbb D$ which satisfies all estimates formally obtained in section \ref{s1.1}.
\end{theorem}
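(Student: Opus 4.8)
The plan is to obtain the strong solution of \eqref{1.main} as a limit, as $n\to\infty$, of solutions of the regularized problems \eqref{1.main-app}, and to deduce uniqueness from the Lipschitz estimate of Lemma \ref{Lem1.lip}. Fix $T>0$. For each $n$ the nonlinearity $f_n$ furnished by Lemma \ref{Lem1.fn} is globally Lipschitz ($|f_n'|\le C_n$) and monotone modulo the uniform constant $K$, so the classical theory of semilinear parabolic equations with globally Lipschitz nonlinearity provides, for the initial datum $u_0^n$ defined through the auxiliary elliptic problem \eqref{1.ell1}, a unique global solution $u_n$ with $u_n\in C([0,T],H^2\cap H^1_0)$ and $\Dt u_n\in L^\infty([0,T],L^2)\cap L^2([0,T],H^1)$; this solution is regular enough to make the formal computations of Section \ref{s1.1} rigorous. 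Since $f_n$ satisfies \eqref{1.f} with the \emph{same} constants $C,K$ for all $n$, Lemmas \ref{Lem1.l2}, \ref{Lem1.h1}, \ref{Lem1.theta} and Corollary \ref{Cor1.d} apply to $u_n$ with $n$-independent constants, and by Lemma \ref{Lem1.au} the data $u_0^n$ are bounded in $\mathbb D$ uniformly in $n$ (so that $\|a\Dx u_0^n-f_n(u_0^n)+g\|_{L^2}$, i.e.\ the $L^2$-norm of $\Dt u_n(0)$, is uniformly bounded too). Hence $u_n$, $f_n(u_n)$ and $\Dt u_n$ are bounded uniformly in $n$ in $L^\infty([0,T],H^2\cap H^1_0)$, $L^\infty([0,T],L^2)$ and $L^\infty([0,T],L^2)\cap L^2([0,T],H^1)$ respectively.

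Next I would pass to the limit. Extracting a subsequence, $u_n\rightharpoondown u$ weak-$*$ in $L^\infty([0,T],H^2\cap H^1_0)$ and $\Dt u_n\rightharpoondown\Dt u$ weak-$*$ in $L^\infty([0,T],L^2)$. The Aubin--Lions lemma (using the bound of $u_n$ in $L^\infty(H^2)$ and of $\Dt u_n$ in $L^2(L^2)$) yields strong convergence $u_n\to u$ in $C([0,T],H^{2-\delta})$ for every $\delta>0$, hence, along a further subsequence, $u_n(t,x)\to u(t,x)$ for a.e.\ $(t,x)$; as $f_n\to f$ in $C_{loc}(\R^k,\R^k)$ this gives $f_n(u_n)\to f(u)$ a.e., and combined with the uniform bound of $f_n(u_n)$ in $L^2([0,T]\times\Omega)$ it yields $f_n(u_n)\rightharpoondown f(u)$ weakly in $L^2$ (and $f(u)\in L^\infty([0,T],L^2)$). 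Passing to the limit in \eqref{1.main-app} term by term shows that $u$ solves \eqref{1.main} in the sense of distributions, with $u(0)=u_0$ because $u_n\to u$ in $C([0,T],L^2)$ while $u_0^n\rightharpoondown u_0$ (Lemma \ref{Lem1.au}). The weak-in-time continuity $u\in C_w([0,T],H^2\cap H^1_0)$ and $f(u)\in C_w([0,T],L^2)$ I would get from $u\in L^\infty(H^2)$, $\Dt u\in L^2(L^2)$ and the elliptic identity $a\Dx u(t)-f(u(t))=\Dt u(t)-g$ combined with elliptic regularity and monotonicity as in Corollary \ref{Cor1.d}; and all the estimates of Section \ref{s1.1} survive the limit by weak lower semicontinuity of the norms.

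For uniqueness, let $u_1,u_2$ be two strong solutions with $u_1(0)=u_2(0)$. A weakly continuous map $[0,T]\to L^2$ is bounded, so $f(u_i)\in L^\infty([0,T],L^2)$; hence $v=u_1-u_2$ satisfies $v\in L^2([0,T],H^1)$ and $\Dt v=a\Dx v-[f(u_1)-f(u_2)]\in L^2([0,T],H^{-1})$, which makes the computation of Lemma \ref{Lem1.lip} rigorous: pairing the equation for $v$ with $v$, using $a+a^*>0$ and the monotonicity consequence $[f(u_1)-f(u_2)].(u_1-u_2)\ge-K|v|^2$, and applying Gronwall's inequality forces $v\equiv 0$.

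The main obstacle is the passage to the limit in the nonlinear term. Because $\mathbb D$ is \emph{not} a linear space, the weak convergence $u_n\rightharpoondown u$ does not directly pass to $f_n(u_n)$; one genuinely needs the strong (Aubin--Lions) compactness of $\{u_n\}$ to produce a.e.\ convergence and then upgrade it, via the uniform $L^2$-bound on $f_n(u_n)$, to weak convergence $f_n(u_n)\rightharpoondown f(u)$. Checking that the limit actually belongs to $C_w([0,T],\mathbb D)$ — rather than merely to $L^\infty([0,T],\mathbb D)$ — is the other point requiring care.
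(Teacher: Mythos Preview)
Your argument is essentially the paper's own proof: approximate $f$ by the globally Lipschitz $f_n$ of Lemma~\ref{Lem1.fn}, take the special initial data $u_0^n$ from \eqref{1.ell1} so that $\Dt u_n(0)$ is bounded uniformly in $n$, derive the uniform $\Bbb D$-bounds, and pass to the limit via a.e.\ convergence of $u_n$ combined with the uniform $L^2$-bound on $f_n(u_n)$; uniqueness comes from Lemma~\ref{Lem1.lip}. The only point you gloss over is the claim that ``all the estimates of Section~\ref{s1.1} survive the limit by weak lower semicontinuity of the norms'': estimates \eqref{1.h1} and \eqref{1.l2h} contain the integrals $\int(|f'(u)\Nx u.\Nx u|,1)\,dt$ and $\int(|f'(u)\theta.\theta|,1)\,dt$, which are not norms of $u$ and for which you have no control of $f_n'(u_n)$; the paper handles this separately via a convexity/Fatou argument (see \eqref{1.connvex}), using that $f_n'(u_n)+K\ge0$ and $f_n'(u_n(t,x))\to f'(u(t,x))$ a.e.
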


\begin{proof} We approximate the desired solution $u$ by the approximate solutions
 $u_n(t)$ of problems \eqref{1.main-app}, where $f_n$ and $u_0^n$ are chosen as
  in Lemmas \ref{Lem1.fn} and \ref{Lem1.au}. Then, since $f_n$ has a sublinear growth,
   the existence and uniqueness of a solution $u_n(t)$ of \eqref{1.main-app} is straightforward.
    At the next step, we need to check that all estimates of section \ref{s1.1} are indeed
    uniform with respect to $n$ (the justification of all these estimates for the case of
    sublinear growth rate is also obvious). This is obvious for
Lemmas \ref{Lem1.l2} and \ref{Lem1.h1} (as well as for Lemma \ref{Lem1.lip}) since
$u_0^n\to u_0$ {\it strongly} in $H^1$ and $f_n$ satisfy \eqref{1.f} uniformly with
respect to $n$. Thus, we only need to look on the estimates related with time
differentiation and $\mathbb D$ norms. The key role in these estimates is played
by the $L^2$-norm of time derivative $\theta_n(t):=\Dt u_n(t)$ and the $L^2$-norm of
it at time moment $t$ is estimated by its $L^2$-norm at time $t=0$.
But due to our construction
%$$
\begin{equation}\label{1.nice}
\theta_n(0)=a\Dx u_0^n-f_n(u_0^n)+g=a\Dx u_0-f(u_0)+g+K(u_0^n-u_0).
\end{equation}
%$$
Therefore, according to Lemma \ref{Lem1.au},
%$$
\begin{equation}\label{1.nice1}
\|\theta_n(0)\|_{L^2}^2\le C(\|u_0\|^2_{\mathbb D}+\|g\|^2_{L^2}+1).
\end{equation}
%$$
By this reason, the analogue of estimate \eqref{1.D} on the level of approximations reads
%$$
\begin{equation}\label{1.Dn}
\|\Dt u_n(t)\|^2_{L^2}+\|u_n(t)\|^2_{H^2}+\|f_n(u_n(t))\|^2_{L^2}\le
 Ce^{K_1t}(\|u_0\|^2_{\mathbb D}+\|g\|^2_{L^2}+1),
\end{equation}
%$$
where positive constants $C$ and $K_1$ are independent of $t$, $u_0$ and $n$.
\par
When the uniform estimates are obtained, we may pass to the limit $n\to\infty$ in equations
 \eqref{1.main-app} and construct the desired strong solution $u(t)$ of the limit problem
  \eqref{1.main} (the passage to the limit in the nonlinear term is done exactly as
   in Lemma \ref{Lem1.au}. The uniqueness of a solution is an immediate corollary of
   Lemma \ref{Lem1.lip} (which does not require justification on the level of strong solutions).
   Finally, passing to the limit in the corresponding estimates for $u_n$,
    we prove that the limit solution $u(t)$ satisfies indeed all of the estimates of
    section \ref{s1.1}. The only non-immediate thing is the passage to the limit in
    the terms like $(f_n'(u_n)\Nx u_n,\Nx u_n)$ (and in the analogous term containing $\Dt u_n$) since
     we do not have any control of the integral norms of $f_n'(u_n)$. However, the passage to
     the limit could be performed here using the condition $f'_n(u_n)\ge-K$,
 the fact that $f_n'(u_n(t,x))\to f'(u(t,x))$ and the convexity arguments.
 Namely, under these conditions, we may establish that
 %$$
\begin{multline}\label{1.connvex}
 \int_T^{T+1}(f'(u(t))\Nx u(t),\Nx u(t))\,dt\le\\\le
 \liminf_{n\to\infty}\int_T^{T+1}(f_n'(u_n(t))\Nx u_n(t),\Nx u_n(t))\,dt
\end{multline}
 %$$
 (see e.g \cite{Ball}, Theorem 5.4). Thus, the theorem is proved.
\end{proof}

\section{Weak solutions, dissipativity and attractors}\label{s5}

In the previous section, we have proved the global existence and uniqueness
of {\it strong} solutions of \eqref{1.main}. Thus, the solution semigroup
%$$
\begin{equation}\label{1.sem}
S(t):\mathbb D\to\mathbb D,\ \ S(t)u_0:=u(t)
\end{equation}
%$$
is well-defined. Moreover, according to Lemma \ref{Lem1.lip}, this semigroup
 is globally Lipschitz continuous in the $L^2$-metric:
%$$
\begin{equation}\label{1.s-lip}
\|S(t)u_0^1-S(t)u_0^2\|_{L^2}^2\le Ce^{K_1t}\|u_0^1-u_0^2\|_{L^2}^2,\ \ u_0^1,u_0^2\in\mathbb D.
\end{equation}
%$$
Thus, we can extend this semigroup by continuity from $\mathbb D$ to its closure in $L^2$ which obviously
 coincides with the whole $L^2$ (since $C^\infty\subset \mathbb D$). Thus, the semigroup
%$$
\begin{equation}\label{1.sem-w}
\widehat S(t)u_0:=\lim_{n\to\infty}S(t)u_0^n,\ \ u_0^n\in \mathbb D,\ \ u_0=\lim_{n\to\infty}u_0^n
\end{equation}
%$$
is well-defined. Moreover, the limit in \eqref{1.sem-w} can be considered in the space
$C(0,T;L^2(\Omega))$, so the trajectories $u(t):=\widehat S(t)u_0$
 automatically belong to the space $C(0,T;L^2(\Omega))$ for all $T\ge0$.
\par
Our next step is to understand in what sense the trajectory $u(t)$ thus
constructed satisfies the initial equation \eqref{1.main}. Note that, for the general
 $u_0\in L^2$, we do not know whether or not $f(u)\in L^1$, so we cannot treat it
 in the distributional sense. Indeed, the only control related with $f(u)$ which we have up to now
  follows from estimate \eqref{1.l2} and claims that $f(u).u\in L^1(\Omega)$ (being pedantic, this
   is proved for the strong solutions only, but it can be easily extended to
    weak solutions using the Fatou lemma). Unfortunately, this is not
     enough to control the $L^1(\Omega)$-norm of the function $f$ itself, so we cannot treat
     the term $f(u)$ in a distributional sense.
\par

 Instead, we use the ideas from the monotone operator theory and
  variational inequalities, see e.g., \cite{Brezis}. Namely, following \cite{EGZ},
   we take an arbitrary test function
%$$
\begin{equation}\label{1.test}
v\in C_w(0,T;\mathbb D)\cap C^1_w(0,T;L^2(\Omega)),\ \ \forall T\in\R_+,
\end{equation}
%$$
and multiply formally equation \eqref{1.main} by $u(t)-v(t)$ and integrate
over $[0,T]$. Then, integrating by parts and using that
$$
-(a\Dx u-a\Dx v,u-v)\ge0,\ \ (f(u)-f(v),u-v)\ge -K\|u-v\|^2_{L^2},
$$
we end up with
%$$
\begin{multline}\label{1.var}
\frac12\|u(T)-v(T)\|^2_{L^2}-\frac12\|u(0)-v(0)\|^2_{L^2}+\\+\int_0^T(\Dt v(t),u(t)-v(t))\,dt\le\\\le
 \int_0^T(a\Dx v(t)-f(v(t))+g,u(t)-v(t))\,dt+K\int_0^T\|u(t)-v(t)\|^2_{L^2}\,dt.
\end{multline}
%$$
The advantage of this approach is that the variational inequality \eqref{1.var} has a sense for all
 $u\in C(0,T;L^2(\Omega))$ and, therefore, can be used to define
 a weak solution $u(t)$ of problem \eqref{1.main}.

\begin{definition}\label{Def1.weak} A function $u\in C(0,T;L^2(\Omega))$, $T\in\R_+$ is a weak solution
 of problem \eqref{1.main} if $u(0)=u_0$ and the variational inequality \eqref{1.var}
 holds for every $T\in\R_+$ and every test function $v$ satisfying \eqref{1.test}.
\end{definition}

We are now ready to state the key result of this section.

\begin{theorem}\label{Th1.weak} Let $g\in L^2(\Omega)$, the diffusion matrix $a$ be such
that $a+a^*>0$ and $f$ satisfy assumptions \eqref{1.f}. Then,
for every $u_0\in L^2(\Omega)$, there exists a unique weak solution $u(t)$ of problem \eqref{1.main}
 and this solution has the form $u(t)=\widehat S(t)u_0$, where the extension
  $\widehat S(t)$ of the solution semigroup $S(t)$ is defined by \eqref{1.sem-w}.
\end{theorem}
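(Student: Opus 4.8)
The plan is to prove existence and uniqueness separately, with existence obtained by passing to the limit in the approximation scheme and uniqueness obtained directly from the variational inequality \eqref{1.var}.

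For \emph{existence}, I would start from a general $u_0\in L^2(\Omega)$, pick a sequence $u_0^n\in\mathbb D$ with $u_0^n\to u_0$ in $L^2$ (possible since $C^\infty\subset\mathbb D$), and let $u_n(t)=S(t)u_0^n$ be the corresponding strong solutions supplied by Theorem \ref{Th1.well}. By the Lipschitz estimate \eqref{1.s-lip}, the sequence $u_n$ is Cauchy in $C(0,T;L^2(\Omega))$, hence converges to some $u\in C(0,T;L^2(\Omega))$ with $u(0)=u_0$; this $u$ is, by construction, precisely $\widehat S(t)u_0$ from \eqref{1.sem-w}. It remains to show that $u$ satisfies the variational inequality \eqref{1.var} for every admissible test function $v$ as in \eqref{1.test}. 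For each strong solution $u_n$, multiplying \eqref{1.main} by $u_n(t)-v(t)$ and integrating by parts is fully justified (since $u_n\in\mathbb D$, the terms $(a\Dx u_n,u_n-v)$ and $(f(u_n),u_n-v)$ make sense), so $u_n$ satisfies \eqref{1.var} with the strong inequality replaced by an identity minus nonnegative monotonicity defects; in particular $u_n$ satisfies the inequality \eqref{1.var}. Then I pass to the limit $n\to\infty$ term by term: the boundary terms $\|u_n(T)-v(T)\|^2_{L^2}$ and $\|u_n(0)-v(0)\|^2_{L^2}$ converge because $u_n\to u$ in $C(0,T;L^2)$; the terms $\int_0^T(\Dt v,u_n-v)\,dt$ and $K\int_0^T\|u_n-v\|^2_{L^2}\,dt$ converge by the same strong $L^2$-convergence; and the crucial term $\int_0^T(a\Dx v-f(v)+g,u_n-v)\,dt$ converges because $a\Dx v-f(v)+g\in L^2(0,T;L^2(\Omega))$ is a \emph{fixed} function (depending only on the test function $v\in\mathbb D$, not on $u_n$) paired against the weakly/strongly convergent sequence $u_n-v$. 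This is exactly the reason the variational formulation was set up with $f(v)$ rather than $f(u)$: no control on $f(u_n)$ is needed. Hence \eqref{1.var} holds in the limit and $u$ is a weak solution.

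For \emph{uniqueness}, suppose $u^1,u^2\in C(0,T;L^2(\Omega))$ are both weak solutions with the same initial data. The standard trick is to use one solution as a test function for the inequality satisfied by the other; however $u^i$ need not be an admissible test function (it need not lie in $C_w(0,T;\mathbb D)\cap C^1_w(0,T;L^2)$), so I would instead approximate. One clean route is to note that each $u^i$ is a limit in $C(0,T;L^2)$ of strong solutions $u^i_n=S(t)(u^i_0)^n$, and strong solutions \emph{are} admissible test functions; plugging $v=u^2_n$ into the inequality for $u^1$, integrating by parts back using that $u^2_n$ solves \eqref{1.main} strongly, and then letting $n\to\infty$, one arrives at a differential inequality of Gronwall type for $\|u^1(t)-u^2(t)\|^2_{L^2}$, exactly of the form appearing in Lemma \ref{Lem1.lip}. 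Since the initial data coincide, Gronwall forces $u^1\equiv u^2$. An essentially equivalent but more self-contained route is to take $v$ a mollification (in time, say via Steklov averages, and if necessary a parabolic-type regularization ensuring $v(t)\in\mathbb D$) of $u^2$, insert it into the inequality for $u^1$, pass to the limit in the mollification parameter, and symmetrize; the monotonicity defect $(f(u^1)-f(u^2),u^1-u^2)\ge-K\|u^1-u^2\|^2_{L^2}$ is what closes the estimate.

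The main obstacle is the uniqueness argument, specifically producing admissible test functions close to a given weak solution: the test-function class \eqref{1.test} requires $\mathbb D$-regularity and $C^1$-regularity in time, while a generic weak solution has only $C(0,T;L^2)$ regularity, and $\mathbb D$ is nonlinear so one cannot mollify freely without leaving the space. Routing through the already-constructed strong solutions $S(t)(u^i_0)^n$ (which automatically lie in the correct class and satisfy \eqref{1.main} classically) circumvents this, at the cost of carefully tracking the convergence of the integrated-by-parts terms; everything else — the limit passages in the existence proof, and the final Gronwall step — is routine given Lemmas \ref{Lem1.l2}, \ref{Lem1.lip} and Theorem \ref{Th1.well}.
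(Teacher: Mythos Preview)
Your proposal is correct and follows essentially the same route as the paper: existence by passing to the limit in the variational inequality \eqref{1.var} along strong approximants $u_n=S(t)u_0^n$, and uniqueness by using strong solutions as admissible test functions $v$ in \eqref{1.var}, exploiting the cancellation $\Dt u_n=a\Dx u_n-f(u_n)+g$ to reduce to a Gronwall inequality. One small wording slip in your uniqueness step: you write that ``each $u^i$ is a limit in $C(0,T;L^2)$ of strong solutions $u^i_n$'', but this is precisely what is being proved, not a hypothesis; the argument actually yields a Gronwall bound for $\|u^1(t)-\widehat S(t)u_0\|_{L^2}^2$ (since $u^2_n\to\widehat S(t)u_0$ by definition of $\widehat S$), forcing $u^1=\widehat S(\cdot)u_0$ and, by symmetry, $u^2=\widehat S(\cdot)u_0$---which is exactly how the paper frames it.
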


\begin{proof} Indeed, any {\it strong} solution is a weak solution of \eqref{1.main}
 (since all manipulations used in the derivation of the variational inequality
 \eqref{1.var} are obviously justified on the level of strong solutions).
 Let now $u(t)=\lim_{n\to\infty}u_n(t)$, where $u_n(t)$ are the strong solutions $u_n(t):=S(t)u_0^n$,
 $u_0^n\in\mathbb D$ and $u_0^n\to u_0$ in $L^2$. The variational inequality for $u_n$ reads
%$$
\begin{multline}\label{1.varn}
\frac12\|u_n(T)-v(T)\|^2_{L^2}-\frac12\|u_n(0)-v(0)\|^2_{L^2}+\\+\int_0^T(\Dt v(t),u_n(t)-v(t))\,dt\le\\\le
 \int_0^T(a\Dx v(t)-f(v(t))+g,u_n(t)-v(t))\,dt+K\int_0^T\|u_n(t)-v(t)\|^2_{L^2}\,dt.
\end{multline}
%$$
Using that $u_n\to u$ in $C(0,T;L^2(\Omega))$ and
 passing to the limit $n\to\infty$ in \eqref{1.varn}, we see that $u(t)$
  satisfies \eqref{1.var} and, therefore,
 $u(t):=\widehat S(t)u_0$ is the desired weak solution of \eqref{1.main}.
\par
Vice versa, let $\bar u(t)$ be a weak solution of \eqref{1.main} and let $\bar u(t):=\widehat S(t)\bar u(0)$.
 Then, there exists a sequence $u_0^n\in\mathbb D$ such that
$u_0^n\to \bar u(0)$ and the sequence of {\it strong} solutions $u_n(t)=S(t)u_0^n$ which
 converges as $n\to\infty$ to the weak solution $u(t)$. We need to show that $u(t)=\bar u(t)$.
\par
Indeed, by the definition of a strong solution, $u_n(t)$ satisfies the assumptions of \eqref{1.test} and
 therefore can be used as a test function $v=u_n$ in the variational inequality \eqref{1.var}
 for the weak solution $\bar u$. Taking $v=u_n$ in it and using that $\Dt u_n=a\Dx u_n-f(u_n)+g$, we get
$$
\frac12\|\bar u(T)-u_n(T)\|_{L^2}^2\le
K\int_0^T\|\bar u(t)-u_n(t)\|^2_{L^2}\,dt+\frac12\|\bar u(0)-u_n(0)\|^2_{L^2}.
$$
Passing to the limit $n\to\infty$ in this inequality, we get
$$
\|\bar u(T)-u(T)\|^2_{L^2}\le 2K\int_0^T\|\bar u(t)-u(t)\|^2_{L^2}\,dt
$$
and, since $T$ is arbitrary, the Gronwall
 inequality gives that $\bar u(t)=u(t)$ for all $t$. Thus, the theorem is proved.
\end{proof}

\begin{remark}\label{Rem.w-sol} There is an alternative possibility to relate a
 weak solution $u(t)$ with equation \eqref{1.main}, namely, for every $\psi\in C_0^\infty(\R^k,\R^k)$,
  the identity
%$$
\begin{equation}\label{def-cut}
\Dt(\psi(u))=\psi'(u)a\Dx u-\psi'(u)f(u)+\psi'(u)g
\end{equation}
%$$
should be satisfied in a sense of distributions, see \cite{Z1}. It is not difficult to verify that,
 indeed, any weak solution $u(t)$ should satisfy this identity. The drawback of this approach is
  that it is unclear whether or not \eqref{def-cut} is enough for the uniqueness.
  \par
  Using identity \eqref{def-cut} it is not difficult to show that under the additional restriction
  %$$
\begin{equation}\label{1.fdot}
  |f(u)|\le C(|f(u).u|+|u|+1)
\end{equation}
%$$
which guarantees that $f(u)\in L^1([0,T]\times\Omega)$, any weak solution satisfies
  equation \eqref{1.main} in a sense of distributions. However, in contrast to the scalar case $k=1$,
  in the
   case of systems \eqref{1.fdot} is an extra restriction which we prefer to avoid.
\end{remark}

As a next step, we note that the weak solutions are dissipative. Indeed, passing
 to the limit in the estimate of Lemma \ref{Lem1.l2} for strong solutions, we derive that
%$$
\begin{equation}\label{1.l2dis}
\|\widehat S(T)u_0\|_{L^2}^2+\int_T^{T+1}\|\widehat S(t)u_0\|^2_{H^1}\,dt\le
 Ce^{-\alpha T}\|u_0\|^2_{L^2}+C(1+\|g\|^2_{L^2})
\end{equation}
%$$
which is a standard dissipative estimate for the semigroup $\widehat S(t)$.
 Analogously, passing to the limit
 in the estimate of Lemma \ref{Lem1.h1}, we get the dissipative estimate in $H^1$ for weak solutions
%$$
\begin{equation}\label{1.h1dis}
\|\widehat S(T)u_0\|_{H^1}^2+\int_T^{T+1}\|\widehat S(t)u_0\|^2_{H^2}\,dt\le
 Ce^{-\alpha T}\|u_0\|^2_{H^1}+C(1+\|g\|^2_{L^2}).
\end{equation}
%$$
In addition, estimates \eqref{1.l2dis} and \eqref{1.h1dis} give in a standard way the
 $L^2$-$H^1$ smoothing property
 for the semigroup $\widehat S(t)$, namely, the following estimate holds:
%$$
\begin{equation}\label{1.smo}
\|\widehat S(t)u_0\|_{H^1}^2\le C\frac{t+1}t\(e^{-\alpha t}\|u_0\|_{L^2}^2+1+\|g\|^2_{L^2}\).
\end{equation}
%$$
These estimates ensure us that the ball
 $\mathcal B_R:=\{u\in H^1_0,\ \|u\|_{H^1}\le R\}$ will be a compact
(in $L^2$) {\it absorbing} ball for the semigroup $\widehat S(t)$
 if $R=R(\|g\|_{L^2})$ is large enough. Remind that the latter means that for
every bounded set $B$ of $L^2$ there exists a time $T=T(B)$ such that
$$
\widehat S(t)B\subset \mathcal B_R
$$
for all $t\ge T$. This fact, in turn, allows us to establish the existence of a
 global attractor for the solution semigroup $\widehat S(t)$ in the phase space $H=L^2(\Omega)$.
 We recall that, by definition, a set $\mathcal A\subset H$ is a global attractor
 for a semigroup $\widehat S(t): H\to H$ if the following conditions are satisfied:
\par
1) $\mathcal A$ is a {\it compact} subset of $H$;
\par
2) $\mathcal A$ is strictly invariant: $\widehat S(t)\mathcal A=\mathcal A$, for all $t\ge0$;
\par
3) It attracts the images of bounded sets as $t\to\infty$, namely, for any bounded $B\subset H$
 and any neighbourhood $\mathcal O(\mathcal A)$, there exists $T=T(B,\mathcal O)$ such that
$$
\widehat S(t)B\subset\mathcal O(\mathcal A)
$$
for all $t\ge T$.
\par
The next theorem may be considered as the second key result of this section.
\begin{theorem}\label{Th1.attr} Let the assumptions of Theorem \ref{Th1.weak} be satisfied. Then
 the weak solution semigroup $\widehat S(t)$ possesses a global attractor $\mathcal A$
 in $H=L^2(\Omega)$ which is a bounded set of $H^1(\Omega)$ and possesses the following description:
%$$
\begin{equation}\label{1.str}
\mathcal A=\mathcal K\big|_{t=0},
\end{equation}
%$$
where $\mathcal K\subset C_b(\R,L^2(\Omega))$ is the set of all complete bounded
trajectories of the semigroup $\widehat S(t)$:
%$$
\begin{equation}\label{1.K}
\mathcal K:=\{u\in C_b(\R,L^2(\Omega)),\ u(t+h)=\widehat S(t)u(h),\ h\in\R,\ t\in\R_+\}.
\end{equation}
%$$
\end{theorem}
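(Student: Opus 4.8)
The plan is to apply the standard abstract machinery for existence of global attractors for semigroups on a complete metric space. We have already assembled almost all the ingredients in the preceding text: the semigroup $\widehat S(t): H \to H$ is well-defined, the trajectories lie in $C(0,T;L^2(\Omega))$, and crucially the Lipschitz estimate \eqref{1.s-lip} (extended by density to all of $H$) shows that for each fixed $t$ the map $\widehat S(t):H\to H$ is continuous; together with the semigroup property $\widehat S(t+s)=\widehat S(t)\widehat S(s)$, $\widehat S(0)=\mathrm{Id}$, and the fact that $t\mapsto\widehat S(t)u_0$ is continuous into $L^2$, this makes $\widehat S(t)$ a continuous (indeed strongly continuous) semigroup on $H$. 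Thus I would first spell out that $\widehat S(t)$ defines a dynamical system on $H$ in the sense required by the attractor existence theorem.

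Second, I would invoke the dissipativity: estimate \eqref{1.l2dis} gives a bounded absorbing set in $L^2$, and combined with \eqref{1.h1dis}--\eqref{1.smo} one gets that the ball $\Cal B_R=\{u\in H^1_0:\ \|u\|_{H^1}\le R\}$ with $R=R(\|g\|_{L^2})$ large is an absorbing set which, by the compact (Rellich) embedding $H^1_0(\Omega)\hookrightarrow L^2(\Omega)$, is precompact in $H$. So $\widehat S(t)$ possesses a compact absorbing set in $H$. At this point the abstract theorem on attractors (see e.g.\ \cite{BV,tem}, or the versions in \cite{Rob,MZ}) applies directly: a continuous semigroup on a metric space that has a compact absorbing set possesses a global attractor $\Cal A$ given by the $\omega$-limit set of the absorbing ball, $\Cal A=\omega(\Cal B_R)=\bigcap_{s\ge0}\overline{\bigcup_{t\ge s}\widehat S(t)\Cal B_R}$. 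Because $\Cal A$ is contained in the closure of $\widehat S(t)\Cal B_R$ for large $t$ and $\widehat S(t)\Cal B_R\subset\Cal B_R$, the attractor is a bounded subset of $H^1(\Omega)$, which gives the stated extra regularity. This step involves no real obstacle — it is a routine application of a known theorem — but I would at least point out that the required $\omega$-limit set computation is legitimate here precisely because of the compactness just established.

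Third, for the representation \eqref{1.str}--\eqref{1.K}, I would use the standard characterization of the global attractor as the set of initial data of all complete bounded trajectories. One inclusion is immediate: if $u\in\Cal K$ then $\{u(t)\}_{t\in\R}$ is a bounded complete trajectory, hence by invariance and attraction it must lie in $\Cal A$ (a bounded complete trajectory is attracted to $\Cal A$ both as $t\to+\infty$ and, running backwards, its "past" is squeezed into $\Cal A$; a clean way is: $u(0)\in\widehat S(t)\{u(-t)\}$ for all $t$, and $\{u(-t):t\ge0\}$ is bounded, so $u(0)\in\bigcap_t\widehat S(t)(\text{bdd set})\subset\Cal A$ by attraction). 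For the reverse inclusion, given $a_0\in\Cal A$, strict invariance $\widehat S(t)\Cal A=\Cal A$ lets one build, by choosing preimages $a_{-n}\in\Cal A$ with $\widehat S(1)a_{-n}=a_{-(n-1)}$, a complete trajectory through $a_0$ that stays in $\Cal A$, hence is bounded; concatenating with the forward orbit $\widehat S(t)a_0$ gives an element of $\Cal K$ whose value at $t=0$ is $a_0$. I would just need the mild point that $\widehat S(t)$ restricted to $\Cal A$ is onto $\Cal A$ so preimages within $\Cal A$ exist, which is exactly strict invariance.

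In short, the only thing that needs genuine verification beyond quoting an abstract theorem is (i) continuity of $\widehat S(t)$ on $H$, which is handed to us by \eqref{1.s-lip}, and (ii) existence of a compact absorbing set, which follows from \eqref{1.l2dis}--\eqref{1.smo} plus Rellich; the rest — invariance, the $\omega$-limit description, and the complete-bounded-trajectory representation \eqref{1.str} — is the textbook argument. I therefore expect no serious obstacle; the "hard part," such as it is, was already done in establishing the uniform dissipative estimates and the Lipschitz bound in Section \ref{s1.1} and Section \ref{s5}.
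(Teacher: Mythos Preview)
Your proposal is correct and follows essentially the same route as the paper: verify continuity of $\widehat S(t)$ via the Lipschitz estimate \eqref{1.s-lip}, exhibit the compact absorbing ball in $H^1_0$ via \eqref{1.smo} and Rellich, and then invoke the abstract attractor existence theorem, with $H^1$-boundedness of $\Cal A$ coming from inclusion in the absorbing set and the representation \eqref{1.str} being a standard corollary. The paper's own proof is in fact terser than yours, simply citing Lemma~\ref{Lem1.lip} and \eqref{1.smo} for the two hypotheses and declaring \eqref{1.str} a standard consequence; your extra detail on constructing complete bounded trajectories is sound but not needed beyond what the abstract theory already supplies.
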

\begin{proof} According to the abstract attractor existence theorem,
 see e.g., \cite{BV}, we need to verify two assumptions:
\par
1) The operators $\widehat S(t): H\to H$ are continuous for every fixed $t$;
\par
2) The semigroup $\widehat S(t)$ possesses a compact absorbing set in $H$.
\par
The first assumption is guaranteed by Lemma \ref{Lem1.lip} and the second one
 follows from estimate \eqref{1.smo}. Thus,
 the global attractor $\mathcal A$ exists. The fact that $\mathcal A$ is a bounded subset of $H^1$ follows
 from the fact that the attractor is always  a subset of an absorbing set and the representation formula
 \eqref{1.str} is a standard corollary of the attractor existence theorem. Thus, the theorem is proved.
\end{proof}

\section{Weak to strong smoothing property}\label{s6}

In this section, we establish that any weak solution $u(t)$ of problem \eqref{1.main} becomes strong
 for $t>0$. The main difficulty here is the fact that we cannot in general estimate
  $|f(u)|$ through $f(u).u$ and, by this reason, we do not know whether
  or not $f(u)$ and $\Dt u$ are distributions. This makes the situation with the parabolic
   smoothing property
  a bit more delicate than usual. We overcome this difficulty under the extra assumption that
%$$
\begin{equation}\label{1.fp}
|f(u)|\le C(1+|u|^p)
\end{equation}
%$$
for some $p>1$ by using the $L^q$-spaces with $q<1$. Namely, we will use the fact that
%$$
\begin{equation}
\|f(u)\|_{L^{2/p}}\le C(\|u\|^{p}_{L^2}+1),
\end{equation}
%$$
where, for $q<1$, we denote by $\|v\|_{L^q}$ exactly the same expression as for the case
$q\ge1$ (simply ignoring the fact that it is no more a norm). Thus, at least on the level of approximations,
we may expect that, for $\theta=\Dt u$,
%$$
\begin{equation}\label{1.good}
\|\theta(t)\|_{L^2(0,1;L^2(\Omega))+L^\infty(0,1;L^{2/p}(\Omega))}\le
C(\|u_0\|_{H^1}^{p}+\|g\|^{p}_{L^2}+1)
\end{equation}
%$$
and this can be used in order to establish the smoothing property for $\theta$. Namely,
 the following theorem holds.
\begin{theorem}\label{Th1.strange}Let $d\ge3$,  the assumptions of Theorem \ref{Th1.weak} and \eqref{1.fp}
hold, and let $u_0\in H^1$ and $u(t)$ be the corresponding weak solution of equation \eqref{1.main}.
Then, $u(t)\in\mathbb D$ for all $t>0$ and the following estimate is valid:
%$$
\begin{equation}\label{1.smm}
\|\Dt u(t)\|_{L^2}^2+\|u(t)\|^2_{\mathbb D}\le Ct^{-N}\(Q(\|u_0\|_{H^1})+Q(\|g\|_{L^2})\),\ \ t\in(0,1],
\end{equation}
%$$
where the exponent $N=N(p)$, positive constant $C$ and the monotone increasing function $Q$
are independent of $u_0$ and $t$.
\end{theorem}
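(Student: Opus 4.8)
The plan is to work on the approximating system \eqref{1.main-app}, establish an estimate of the form \eqref{1.good} for $\theta_n = \Dt u_n$ uniformly in $n$, then bootstrap this into a $\mathbb D$-bound, and finally pass to the limit. The first step is to obtain the key a priori estimate for the time derivative in a \emph{weaker} topology than $L^2$: multiplying equation \eqref{1.theta} (written for $u_n$, $\theta_n$) by $\theta_n |\theta_n|^{r-2}$ for a suitable exponent $r<2$, and using the monotonicity $f_n'(u_n)\ge -K$ together with the positivity of $a$, one gets an estimate controlling $\theta_n$ in, say, $L^\infty(\delta,1;L^{2/p}(\Omega))$ for any $\delta>0$, starting from the control of $\theta_n(t_0)$ in $L^{2/p}$ at some small time $t_0$. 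To get such a control at a small time, one uses \eqref{1.h1dis} (which gives $\|u_n(t)\|_{H^2}\in L^2(0,1)$ uniformly, hence $\|u_n\|_{H^2}(t_0)$ finite for a.e.\ $t_0$) and \eqref{1.fp}: from the equation, $\theta_n(t_0) = a\Dx u_n(t_0) - f_n(u_n(t_0)) + g$, and $\|f_n(u_n)\|_{L^{2/p}}\le C(\|u_n\|_{L^2}^p+1)$ by the growth bound, so $\theta_n(t_0)\in L^{2/p}$ with a bound depending only on $\|u_0\|_{H^1}$ and $\|g\|_{L^2}$ (and on $t_0$, after optimizing over the choice of $t_0$ in a dyadic subinterval). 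This yields \eqref{1.good}.

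The second step is to convert this into the $\mathbb D$-estimate \eqref{1.smm}. Here I would argue exactly as in Corollary \ref{Cor1.d}, but now the right-hand side $\tilde g_n(t) = \theta_n(t) - g$ lives only in $L^{2/p}$, not in $L^2$. So one cannot directly apply $L^2$ elliptic regularity; instead, one treats \eqref{1.ell} as a nonlinear elliptic equation with a forcing term in $L^q$, $q = 2/p < 2$, and applies a suitable regularity result for the monotone elliptic problem $a\Dx u - f(u) = h$ with $h\in L^q$. This is where the partial/elliptic regularity results alluded to in the introduction (Appendix \ref{A}) are needed: one needs that $L^q$-data, $q>1$, gives enough regularity of $u$ (e.g.\ $u\in W^{2,q}$ and $f(u)\in L^q$), and then a bootstrap — improving the exponent in finitely many steps using the $H^2\cap H^1_0$ control already available and Sobolev embeddings — brings one back to $f(u)\in L^2$, i.e.\ $u\in\mathbb D$. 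The number of steps is finite and depends only on $p$ and $d$, which fixes the exponent $N=N(p)$ in the power of $t$ (each use of the smoothing estimate on a shrinking time interval costs a negative power of $t$).

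The third step is routine: all the above estimates are uniform in $n$, so passing to the limit $n\to\infty$ (using the convexity/lower-semicontinuity argument \eqref{1.connvex} for the terms involving $f_n'(u_n)$, exactly as in the proof of Theorem \ref{Th1.well}) transfers the bound to the weak solution $u(t) = \widehat S(t)u_0$; by uniqueness of weak solutions (Theorem \ref{Th1.weak}) this is \emph{the} weak solution, so $u(t)\in\mathbb D$ for $t>0$ and \eqref{1.smm} holds. The main obstacle is the second step: making the nonlinear elliptic regularity with sub-$L^2$ data work with constants depending only on the quantities we control, and in particular closing the bootstrap without picking up any control on integral norms of $f_n'(u_n)$ (which we do not have). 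The $L^q$-estimate for $q<1$ mentioned in the text hints that even the borderline endpoint has to be handled by working in $L^q$ with $q$ slightly below $1$ at an intermediate stage, which requires care since $\|\cdot\|_{L^q}$ is only a quasi-norm there.
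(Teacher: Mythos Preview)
Your route diverges from the paper's and has a genuine gap. In Step~1, testing the $\theta$-equation with $\theta|\theta|^{r-2}$ for $r=2/p$ fails as soon as $p>2$: after integration by parts the diffusion contribution is, already for scalar $a=\alpha I$ and $k=1$, equal to $(r-1)\alpha(|\Nx\theta|^2,|\theta|^{r-2})$, which has the wrong sign for $r<1$, so the $L^r$ energy inequality does not close. More seriously, Step~2 misreads Appendix~\ref{A}: Theorem~\ref{ThA.main} gives \emph{extra} regularity for forcing in $L^q$ with $q>2$; it says nothing about data in $L^{2/p}$ with $2/p\le1$. And the proposed bootstrap on the elliptic problem $a\Dx u-f(u)=\tilde g$ does not close: the only multiplier that yields control of $\|\Dx u\|_{L^2}$ or $\|f(u)\|_{L^2}$ is $\Dx u$ (or $f(u)$) itself, and that already requires $\tilde g\in L^2$, which is precisely what you are trying to obtain. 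You correctly identify this as ``the main obstacle'', but the mechanism you propose to overcome it is not available.

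The paper avoids both issues by staying at the $L^2$ level for $\theta_n$ throughout. One multiplies the $\theta_n$-equation by $t^N\theta_n$; the only bad term is $Ct^{N-1}\|\theta_n\|_{L^2}^2$. The key idea is the splitting
$\|\theta_n\|_{L^2}^2=(|\theta_n|^s,|\theta_n|^{2-s})$ for a small $s>0$, into which one substitutes $|\theta_n|^s\le C(|\Dx u_n|^s+|g|^s+|f_n(u_n)|^s)$ from the equation itself. The first two pieces are bounded (via H\"older and Young) by $\|\Dx u_n\|_{L^2}^2+\|g\|_{L^2}^2+t^{2(N-1)/(2-s)}\|\theta_n\|_{L^2}^2$; the $f_n$-piece uses only the trivial bound $\|f_n(u_n)\|_{L^{2/p_1}}\le C(1+\|u_n\|_{L^2})^{p_1}$ together with the Sobolev embedding $H^1\subset L^{2d/(d-2)}$ applied to the factor $|\theta_n|^{2-s}$, producing $Q(\|u_0\|_{L^2})+\eb\, t^{2(N-1)/(2-s)}\|\theta_n\|_{H^1}^2$, which is absorbed by the dissipation $\alpha t^N\|\theta_n\|_{H^1}^2$ on the left. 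With $s=4/(d(p_1-1)+2)$ the H\"older exponents match, and with $N\ge2/s$ the time weights are absorbable; Gronwall from $t=0$ (initial value $0$ because of the factor $t^N$) then yields $t^N\|\theta_n(t)\|_{L^2}^2\le Q(\|u_0\|_{H^1})+Q(\|g\|_{L^2})$ directly. No elliptic bootstrap is needed, and the $L^q$-space with $q<1$ enters only through the harmless estimate of $\|f_n(u_n)\|_{L^{2/p_1}}$.
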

\begin{proof} We first note that it is enough to verify \eqref{1.smm} for $u_0\in\mathbb D$
only when $u(t)$ is a {\it strong} solution. Moreover, it is enough to obtain the estimate
 for $\Dt u$ only since then the estimate for $\|u(t)\|_{\mathbb D}$ will follow from the
  elliptic problem \eqref{1.ell}. Second, we approximate the strong solution $u(t)$ by the
  solutions $u_n(t)$ of auxiliary problems \eqref{1.main-app}. Finally, analyzing the proof
  of Lemma \ref{Lem1.fn}, we see that assumption \eqref{1.fp} allows us to pose the
   extra assumption
%$$
\begin{equation}\label{1.fext}
|f_n(u)|\le C(1+|u|^{p_1})
\end{equation}
%$$
for some $p_1>p>1$ and constant $C$ independent of $n$ (e.g., by taking $\Psi(u):=|u|^{p_1+1}$).
\par
Let $\theta_n(t):=\Dt u_n(t)$. Then, this function solves the equation
%$$
\begin{equation}
\Dt\theta_n=a\Dx\theta_n-f_n'(u_n)\theta_n.
\end{equation}
%$$
Multiplying this equation by $t^N\theta_n(t)$ where $N$ is a sufficiently big number, we end up with
%$$
\begin{multline}\label{1.sm-t}
\frac d{dt}(t^N\|\theta_n(t)\|^2_{L^2})+\alpha t^N\|\theta_n(t)\|^2_{H^1}-
\\-2K(t^N\|\theta_n(t)\|^2_{L^2})\le C t^{N-1}\|\theta_n(t)\|^2_{L^2}.
\end{multline}
%$$
We need to estimate the integral in the right-hand side. To this end, we fix
sufficiently small $s>0$ which will be specified below and write
%$$
\begin{multline}\label{1.split}
\|\theta_n\|^2_{L^2}=(|\theta_n|^2,1)=(|\theta_n|^s,|\theta_n|^{2-s})\le\\\le
C(|\Dx u_n|^s,|\theta_n|^{2-s})+C(|g|^s,|\theta_n|^{2-s})+C(|f_n(u_n(s))|^s,|\theta_n|^{2-s}),
\end{multline}
%$$
where we have used equation \eqref{1.main-app} in order to express $\theta_n$
through $u_n$. Let us estimate every term in the RHS separately. Applying the H\"older
 and Young inequalities to the first term, we get
%$$
\begin{multline}\label{1.split1}
t^{N-1}(|\Dx u_n|^s,|\theta_n|^{2-s})\le C\|\Dx u_n\|_{L^2}^s\(t^{\frac{N-1}{2-s}}
\|\theta_n\|_{L^2}\)^{2-s}\le\\\le C\|\Dx u_n\|^2_{L^2}+t^{2\frac{N-1}{2-s}}\|\theta_n\|^2_{L^2}.
\end{multline}
%$$
This gives us a good estimate if $N$ is chosen in such a way that $2\frac{N-1}{2-s}\ge N$.
The second term in the RHS of \eqref{1.split} can be estimated analogously to have
%$$
\begin{equation}\label{1.split2}
t^{N-1}(|g|^s,|\theta_n|^{2-s})\le  C\|g\|^2_{L^2}+t^{2\frac{N-1}{2-s}}\|\theta_n\|^2_{L^2}.
\end{equation}
%$$
Let us now estimate the most complicated third term. To this end, we use the embedding theorem
 $H^1\subset L^r$ where $\frac1r=\frac12-\frac1d$ together with the H\"older inequality with
 exponents $q_1$ and $q_2$, $\frac1{q_1}+\frac1{q_2}=1$, with $(2-s)q_2=r$ to get
$$
(|f_n(u_n)|^s,|\theta_n|^{2-s})\le\|f_n(u_n)\|_{L^{sq_1}}^s\|\theta_n\|_{H^1}^{2-s}.
$$
Moreover, due to assumptions  \eqref{1.fext}, we have
$$
\|f_n(u_n)\|_{L^{2/p_1}}\le C(\|u_n\|_{L^2}+1)^{p_1}\le C(\|u_0\|_{L^2}+1+\|g\|_{L^2})^{p_1},
$$
where $C$ is independent of $n$. We may also fix $sq_1=\frac2{p_1}$ to get
%$$
\begin{equation}\label{1.big}
(|f_n(u_n)|^s,|\theta_n|^{2-s})\le C(\|u_0\|_{L^2}+1+\|g\|_{L^2})^{sp_1}\|\theta_n\|^{2-s}_{H^1}
\end{equation}
%$$
and end up with the following system for the exponents $q_1$, $q_2$ and $s$
$$
\frac1{q_1}+\frac1{q_2}=1,\ \ \frac1{q_1}=\frac{sp_1}{2},\ \ \frac1{q_2}=(2-s)\(\frac12-\frac1d\).
$$
Solving this system, we get
$$
s=\frac{4}{d(p_1-1)+2},\ \ q_1=\frac{d(p_1-1)+2}{2p_1}
$$
and we see that $0<s<2$ and $1<q_1<\infty$, so all of the exponents are in the prescribed range
and \eqref{1.big} holds indeed. Applying the Young inequality, we arrive at
%$$
\begin{equation}\label{1.split3}
t^{N-1}(|f_n(u_n)|^s,|\theta_n|^{2-s})\le Q_\eb(\|u_0\|_{L^2})+Q_\eb(\|g\|_{L^2})+
\eb t^{2\frac{N-1}{2-s}}\|\theta_n\|^2_{H^1}
\end{equation}
%$$
where $\eb>0$ is arbitrary small and the monotone function $Q_\eb$ is independent of $u_0$.
Combining estimates \eqref{1.split}, \eqref{1.split1}, \eqref{1.split2} and \eqref{1.split3}
for estimating the RHS of \eqref{1.sm-t} and fixing $\eb>0$ to be small enough and
$N$ satisfying $2\frac{N-1}{2-s}\ge N$, we arrive at
%$$
\begin{multline}\label{1.fin}
\frac d{dt}\(t^N\|\theta_n(t)\|^2_{L^2}\)\le K_1\(t^N\|\theta_n(t)\|^2_{L^2}\)+\\+
C\|\Dx u_n(t)\|^2_{L^2}+Q(\|u_0^n\|_{H^1})+Q(\|g\|_{L^2}),\ \ t\le1.
\end{multline}
%$$
Applying the Gronwall inequality to this relation and using \eqref{1.h1dis} for estimating the
 integral of the $H^2$-norm of the solution, we end up with
%$$
\begin{equation}\label{1.fin1}
t^N\|\theta_n(t)\|^2_{L^2}\le Q(\|u_0^n\|_{H^1})+Q(\|g\|_{L^2}),\ \ t\in(0,1]
\end{equation}
%$$
and passing to the limit $n\to\infty$, we derive the desired estimate for $\Dt u(t)$. Thus,
 the theorem is proved.
\end{proof}

Combining this result with the $L^2$ to $H^1$ smoothing property \eqref{1.smo}, we get the following result.

\begin{corollary}\label{Cor1.smm} Under the assumptions of Theorem \ref{Th1.strange}, the weak
solution semigroup $\widehat S(t)$ possesses the following smoothing property:
%$$
\begin{equation}\label{1.sm-fin}
\|\widehat S(t)u_0\|_{\mathbb D}^2\le C t^{-N-1}\(Q(\|u_0\|_{L^2})+Q(\|g\|_{L^2})\), \ t\in(0,1],
\end{equation}
%$$
where the positive constant $C$ and monotone function $Q$ are independent of $t$ and $u_0\in L^2(\Omega)$.
\end{corollary}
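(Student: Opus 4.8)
The plan is to exploit the semigroup property and split the time interval $(0,t]$ into two halves of length $t/2$, using the $L^2$-to-$H^1$ smoothing \eqref{1.smo} on the first half and the $H^1$-to-$\mathbb D$ smoothing of Theorem \ref{Th1.strange} on the second. Fix $u_0\in L^2(\Omega)$ and $t\in(0,1]$, and write $\widehat S(t)u_0=\widehat S(t/2)\,v$ with $v:=\widehat S(t/2)u_0$. First I would apply \eqref{1.smo} at time $t/2\le\tfrac12$, which places $v$ in $H^1_0(\Omega)$ with the quantitative bound
$$
\|v\|_{H^1}^2\le C\,\frac{t/2+1}{t/2}\big(e^{-\alpha t/2}\|u_0\|_{L^2}^2+1+\|g\|^2_{L^2}\big)\le C\,t^{-1}\big(\|u_0\|_{L^2}^2+1+\|g\|^2_{L^2}\big),
$$
where I used $t\le1$ so that $(t/2+1)/(t/2)\le 3/t$, and $e^{-\alpha t/2}\le1$. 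Thus the first slice moves $u_0\in L^2$ into a ball of $H^1_0$ whose radius degenerates like $t^{-1/2}$.

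Next I would apply Theorem \ref{Th1.strange} to the weak solution issuing from $v\in H^1$, evaluated after time $t/2$, noting $\widehat S(t/2)v=\widehat S(t)u_0$ by the semigroup property, and use the $L^2$-dissipative estimate \eqref{1.l2dis} to bound $\|v\|_{L^2}$ in terms of $\|u_0\|_{L^2}$ and $\|g\|_{L^2}$. This gives
$$
\|\widehat S(t)u_0\|_{\mathbb D}^2=\|\widehat S(t/2)v\|_{\mathbb D}^2\le C\,(t/2)^{-N}\big(Q(\|v\|_{H^1})+Q(\|g\|_{L^2})\big).
$$
What then remains is to absorb the factor $Q(\|v\|_{H^1})$, into which the bad power $t^{-1/2}$ from the first step enters, into the claimed prefactor $C\,t^{-N-1}$ with a relabelled monotone function $Q$.

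The main obstacle is precisely this last point: a priori $Q(\|v\|_{H^1})=Q\big(Ct^{-1/2}(\cdots)\big)$ could blow up faster than $t^{-1}$ if $Q$ grows faster than quadratically. The resolution I would give is that the dependence of the right-hand side of \eqref{1.smm} on $\|u_0\|_{H^1}$ is in fact only \emph{quadratic}. Indeed, inspecting the proof of Theorem \ref{Th1.strange}, the $H^1$-norm of the datum enters solely through the term $C\|\Dx u_n(t)\|^2_{L^2}$ in \eqref{1.fin}, which after the Gronwall argument is controlled by $\int_0^1\|u_n(\tau)\|^2_{H^2}\,d\tau$ and hence, via \eqref{1.h1dis} with $T=0$, by $C(\|u_0\|_{H^1}^2+1+\|g\|^2_{L^2})$; the genuinely nonlinear, possibly fast-growing contribution to $Q$ comes from the $f_n$-term \eqref{1.split3}, which sees only $\|u_0\|_{L^2}$. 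Consequently the proof of Theorem \ref{Th1.strange} in fact yields the sharper estimate
$$
\|\Dt u(t)\|^2_{L^2}+\|u(t)\|^2_{\mathbb D}\le C\,t^{-N}\big(\|u_0\|^2_{H^1}+Q(\|u_0\|_{L^2})+Q(\|g\|_{L^2})\big),\qquad t\in(0,1].
$$

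Applying this sharpened form to $v$ at time $t/2$, substituting $\|v\|_{H^1}^2\le Ct^{-1}(\|u_0\|_{L^2}^2+1+\|g\|_{L^2}^2)$ from the first step and $\|v\|_{L^2}^2\le C(\|u_0\|_{L^2}^2+1+\|g\|_{L^2}^2)$ from \eqref{1.l2dis}, and then relabelling the constant $C$ and the monotone function $Q$ (composing $Q$ with the affine bounds for $\|v\|_{L^2}^2$, which only enlarges $Q$ in a way independent of $t$ and $u_0$), I would arrive at
$$
\|\widehat S(t)u_0\|_{\mathbb D}^2\le C\,(t/2)^{-N}\cdot C\,t^{-1}\big(Q(\|u_0\|_{L^2})+Q(\|g\|_{L^2})\big)\le C\,t^{-N-1}\big(Q(\|u_0\|_{L^2})+Q(\|g\|_{L^2})\big)
$$
for all $t\in(0,1]$, which is exactly \eqref{1.sm-fin}. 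The only genuinely new ingredient over what has already been proved is the bookkeeping observation above that isolates the quadratic $H^1$-dependence in Theorem \ref{Th1.strange}; everything else is the two-step semigroup composition.
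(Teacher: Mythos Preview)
Your proof is correct and follows exactly the approach the paper intends: the paper merely states that the corollary follows by combining Theorem \ref{Th1.strange} with the $L^2$-to-$H^1$ smoothing \eqref{1.smo}, and you have supplied the two-step semigroup argument that makes this precise. Your observation that the $H^1$-dependence in \eqref{1.smm} is in fact quadratic (the genuinely nonlinear $Q$ seeing only $\|u_0\|_{L^2}$ via \eqref{1.split3}) is the bookkeeping detail needed to produce the exponent $t^{-N-1}$ rather than something worse, and is implicit in the paper's choice of that exponent.
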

Thus, under the  assumption \eqref{1.fp}, any weak solution  indeed becomes strong
 for $t>0$ (the extra assumption $d\ge3$ is not essential since for $d\le2$ the equation is
  subcritical and the smoothing property is obvious). This, in particular, gives the following result
   on the regularity of the global attractor.

\begin{corollary}\label{Cor1.att-sm} Let the assumptions of Theorem \ref{Th1.strange} hold. Then the global
 attractor $\mathcal A$ of the solution
semigroup $\widehat S(t)$ constructed in Theorem \ref{Th1.attr} is a bounded subset of $\mathbb D$:
%$$
\begin{equation}\label{1.att-D}
\|\mathcal A\|_{\mathbb D}\le Q(\|g\|_{L^2})
\end{equation}
%$$
for some monotone increasing function $Q$.
\end{corollary}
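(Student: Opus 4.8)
The plan is to deduce Corollary~\ref{Cor1.att-sm} directly from the smoothing estimate \eqref{1.sm-fin} together with the strict invariance and boundedness of the attractor established in Theorem~\ref{Th1.attr}. Recall that $\mathcal A$ is compact in $H=L^2(\Omega)$, it is contained in the absorbing ball $\mathcal B_R\subset H^1_0(\Omega)$ with $R=R(\|g\|_{L^2})$, and it satisfies $\widehat S(t)\mathcal A=\mathcal A$ for every $t\ge0$. The idea is then simply to apply $\widehat S(1)$ to the attractor and use the $H$-to-$\mathbb D$ smoothing on the time interval $(0,1]$.

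First I would fix $t=1$ in Corollary~\ref{Cor1.smm}, so that for every $u_0\in L^2(\Omega)$
\begin{equation}\label{att-sm-1}
\|\widehat S(1)u_0\|_{\mathbb D}^2\le C\(Q(\|u_0\|_{L^2})+Q(\|g\|_{L^2})\).
\end{equation}
Next I would use strict invariance: any $v\in\mathcal A$ can be written as $v=\widehat S(1)u_0$ for some $u_0\in\mathcal A$ (indeed $\widehat S(1)\mathcal A=\mathcal A$). Since $\mathcal A\subset\mathcal B_R$ with $R=R(\|g\|_{L^2})$, we have $\|u_0\|_{L^2}\le\|u_0\|_{H^1}\le R(\|g\|_{L^2})$, and therefore \eqref{att-sm-1} gives
\begin{equation}\label{att-sm-2}
\|v\|_{\mathbb D}^2\le C\(Q(R(\|g\|_{L^2}))+Q(\|g\|_{L^2})\)=:\widetilde Q(\|g\|_{L^2})
\end{equation}
for all $v\in\mathcal A$, with $\widetilde Q$ monotone increasing. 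Taking the supremum over $v\in\mathcal A$ yields $\|\mathcal A\|_{\mathbb D}\le \widetilde Q(\|g\|_{L^2})^{1/2}$, which is precisely \eqref{1.att-D} after renaming the function $Q$.

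The only point that deserves a word of care — and what I would regard as the main (minor) obstacle — is making sure that $\widehat S(1)$ acts on points of $\mathcal A$ exactly as the weak-solution map for which Corollary~\ref{Cor1.smm} was proved, so that the estimate \eqref{1.sm-fin} is legitimately applicable to initial data lying on the attractor; this is immediate because $\mathcal A\subset L^2(\Omega)$ and \eqref{1.sm-fin} holds for all $u_0\in L^2(\Omega)$. One should also note that $\mathcal A$ being bounded in $\mathbb D$ is a \emph{stronger} statement than $\mathcal A$ being bounded in $H^1$ (which was already known), and that $\mathbb D$ is not a linear space, so ``bounded in $\mathbb D$'' means bounded in the metric induced by the embedding $j:\mathbb D\to[H^2\cap H^1_0]\times L^2$ — but the quantity $\|v\|_{\mathbb D}^2=\|v\|_{H^2}^2+\|f(v)\|_{L^2}^2$ controls exactly that, so \eqref{att-sm-2} is the correct formulation. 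No compactness or further regularity of $f$ is needed here beyond what is already available.
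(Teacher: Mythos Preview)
Your argument is correct and is exactly the one the paper has in mind: the paper's proof consists of the single remark that the assertion ``is an immediate corollary of \eqref{1.sm-fin} and the strict invariance of the global attractor,'' and you have simply written out those two ingredients explicitly. The only superfluous step is routing the $L^2$-bound on $u_0\in\mathcal A$ through the $H^1$-absorbing ball; compactness (or the $L^2$-dissipative estimate \eqref{1.l2dis}) already gives $\|u_0\|_{L^2}\le C(\|g\|_{L^2})$ directly, but this is cosmetic.
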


Indeed, this assertion is an immediate corollary of \eqref{1.sm-fin} and the strict invariance of
 the global attractor.

\section{Further regularity and strong attraction}\label{s7}

In this section we discuss the possibility to get more regular than $u\in\Bbb D$ solutions.
 We start with some partial result on the regularity of $\Dt u$ which does not require any extra
  assumptions on $f$ and $g$.

\begin{proposition}\label{Prop5.der} Let the assumptions of Theorem \ref{Th1.well} hold. Then,
 there exists
a positive number $r>0$ depending only on the matrix $a$ such that, for any strong solution
 $u(t)\in\Bbb D$,
the following estimate holds:
%$$
\begin{equation}\label{5.smdt}
t\|\Dt u(t)\|_{L^{r+2}}^{r+2}+\int_0^ts\|\Nx(|\Dt u(s)|^{\frac{r+2}2})\|^2_{L^2}\,ds\le C
\|\Dt u(0)\|_{L^2}^{r+2},
\end{equation}
%$$
where $0\le t\le1$ and the constant $C$ is independent of $t$ and $u$.
\end{proposition}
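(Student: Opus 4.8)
The plan is to derive a nonlinear energy inequality for $\theta=\Dt u$ by multiplying the equation for $\theta$, namely \eqref{1.theta}, by $t|\theta|^r\theta$ for an exponent $r>0$ to be chosen depending only on $a$, and integrating over $\Omega$. The key structural point is that the ``bad'' reaction term contributes $(f'(u)\theta, t|\theta|^r\theta)\ge -K t(|\theta|^{r+2},1)$ by the monotonicity assumption $f'(u)\ge-K$, so it produces only a harmless term of the form $Kt\|\theta\|_{L^{r+2}}^{r+2}$, which will be absorbed by Gronwall. The dissipative term $(a\Dx\theta, t|\theta|^r\theta)$ is the one that must be controlled from below: after integration by parts it becomes $-t\int_\Omega |\theta|^r (a\Nx\theta).\Nx\theta\,dx - \frac{r}{2}t\int_\Omega |\theta|^{r-2}(a\Nx\theta).\Nx(|\theta|^2)\,dx$ (componentwise, with the matrix $a$ coupling the components of $\theta$), and the point is that for the $k\times k$ system this quadratic form in $\Nx(|\theta|^{(r+2)/2})$ is coercive provided $r$ is small enough relative to the ellipticity and the size of the antisymmetric part of $a$ — this is exactly where the restriction ``$r$ depends only on $a$'' enters. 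I would state this coercivity as a short linear-algebra lemma: for $r$ small, $\int_\Omega |\theta|^r(a\Nx\theta).\Nx\theta + \frac r2 |\theta|^{r-2}(a\Nx\theta).\Nx|\theta|^2\,dx \ge \beta \|\Nx(|\theta|^{(r+2)/2})\|_{L^2}^2$ for some $\beta=\beta(a)>0$.

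Granting that coercivity, multiplying by $t$ and accounting for the extra term coming from $\frac{d}{dt}(t\cdot)$ gives
\begin{equation}
\frac d{dt}\(t\|\theta(t)\|_{L^{r+2}}^{r+2}\) + \beta' t\|\Nx(|\theta(t)|^{\frac{r+2}2})\|_{L^2}^2
\le \|\theta(t)\|_{L^{r+2}}^{r+2} + K_2\, t\|\theta(t)\|_{L^{r+2}}^{r+2},
\end{equation}
so the remaining task is to integrate this in time and bound $\int_0^1\|\theta(t)\|_{L^{r+2}}^{r+2}\,dt$ by $\|\theta(0)\|_{L^2}^{r+2}$. For this I would choose $r$ additionally so small that $r+2$ lies below the exponent dictated by the $H^1$-control of $\theta$ already available from Lemma \ref{Lem1.theta}: since $\theta\in L^2(0,1;H^1(\Omega))\subset L^2(0,1;L^q(\Omega))$ with $\frac1q=\frac12-\frac1d$ (for $d\ge3$), interpolation between $L^\infty(0,1;L^2)$ and $L^2(0,1;L^q)$ gives $\theta\in L^{r+2}(0,1;L^{r+2})$ for all sufficiently small $r>0$, with the norm controlled by $\|\theta\|_{L^\infty(0,1;L^2)}$ and $\|\theta\|_{L^2(0,1;H^1)}$, hence by $\|\theta(0)\|_{L^2}^{?}$ via \eqref{1.l2h} and \eqref{1.h1}. (Here one uses $\|u_0\|_{\mathbb D}$ and $\|\Nx u_0\|_{L^2}$ are themselves controlled once $\theta(0)$ is fixed, together with the elliptic relation \eqref{1.ell}; alternatively one simply keeps the dependence through these already-established a priori bounds.) A standard Gronwall argument then yields $t\|\theta(t)\|_{L^{r+2}}^{r+2}\le C\|\theta(0)\|_{L^2}^{r+2}$ together with the integrated gradient term, which is precisely \eqref{5.smdt}.

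As with the earlier a priori estimates, all of this is first carried out on the approximations $\theta_n=\Dt u_n$ solving $\Dt\theta_n=a\Dx\theta_n-f_n'(u_n)\theta_n$, where $f_n$ has sublinear growth so every manipulation (the integration by parts against $t|\theta_n|^r\theta_n$, the chain rule for $|\theta_n|^{(r+2)/2}$) is legitimate and the constants $\beta,\beta',K_2$ are uniform in $n$ because $f_n'\ge -K$ uniformly; one then passes to the limit $n\to\infty$ using weak lower semicontinuity of the gradient term, exactly in the spirit of \eqref{1.connvex}. The main obstacle is the coercivity lemma for the quadratic form with the $|\theta|^r$ weight and the matrix coupling: for a single equation ($k=1$) it is the classical fact that $\int |\theta|^r|\Nx\theta|^2 \sim \|\Nx|\theta|^{(r+2)/2}\|^2$ up to a constant depending on $r$, but for systems one must control the cross term $\frac r2|\theta|^{r-2}(a\Nx\theta).\Nx|\theta|^2$ using $|\Nx|\theta||\le|\Nx\theta|$ (componentwise Cauchy--Schwarz) and the boundedness of $a$, which forces $r$ to be chosen small in terms of $\|a\|$ and the coercivity constant of $a+a^*$ — this is the source of the ``$r>0$ depending only on $a$'' in the statement.
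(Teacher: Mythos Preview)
Your proposal is correct and follows essentially the same strategy as the paper: multiply the equation for $\theta=\Dt u$ by $\theta|\theta|^r$, use the coercivity of the diffusion term for $r$ small depending only on $a$ together with $f'(u)\ge -K$, then multiply by $t$, integrate, and control $\int_0^t\|\theta\|_{L^{r+2}}^{r+2}\,ds$ via the $L^\infty(L^2)\cap L^2(H^1)$ bound from Lemma~\ref{Lem1.theta} and interpolation. Your added remarks on the justification via the approximations $\theta_n$ and on the precise algebraic source of the smallness condition on $r$ are accurate and make explicit what the paper leaves implicit.
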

\begin{proof} Let $\theta:=\Dt u$. Then this function satisfies equation \eqref{1.theta}. Let us multiply
 this equation by $\theta|\theta|^r$ and integrate over $x$. This gives
 $$
 \frac1{r+2}\frac d{dt}\|\theta(t)\|^{r+2}_{L^{r+2}}-(a\Dx\theta,\theta|\theta|^r)+
 (f'(u)\theta,\theta|\theta|^r)=0.
$$
Integrating by parts in the second term, we get
$$
-(a\Dx\theta, \theta|\theta|^r)\ge (a\Nx\theta,\Nx\theta|\theta|^r)-
Cr(|\Nx\theta|^2,|\theta|^r)\ge (\alpha-Cr)(|\Nx\theta|^2,|\theta|^r)
$$
for some positive $\alpha$. Fixing now $r>0$ small enough and estimating the term
 containing $f$ using $f'(u)\ge-K$, we arrive at
 $$
\frac1{r+2}\frac d{dt}\|\theta(t)\|^{r+2}_{L^{r+2}}+\frac\alpha2(|\Nx\theta|^2,|\theta(t)|^r)\le
K\|\theta(t)\|^{r+2}_{L^{r+2}}.
 $$
 Multiplying this estimate by $t$ and integrating in time,
 we arrive at
%$$
\begin{equation}
t\|\theta(t)\|^{r+2}_{L^{r+2}}+\int_0^ts(|\Nx\theta(s)|^2,|\theta(s)|^r)\,ds\le
C\int_0^t\|\theta(s)\|^{r+2}_{L^{r+2}}\,ds
\end{equation}
%$$
for $0\le t\le1$. To estimate the right-hand side of this inequality, we use estimate \eqref{1.l2h} and
Sobolev embedding theorem
which gives that, for sufficiently small $r>0$,
$$
\int_0^t \|\theta(s)\|^{r+2}_{L^{r+2}}\,ds\le C\|\theta\|_{L^\infty(0,t;L^2)\cap L^2(0,t;H^1)}^{r+2}\le
 C\|\theta(0)\|_{L^{2}}^{r+2}
$$
and finishes the proof of the proposition.
\end{proof}

\begin{corollary} Let the assumptions of Proposition \ref{Prop5.der} hold and let in addition equation
\eqref{1.main} be dissipative in $\Bbb D$, i.e.
%$$
\begin{equation}
\|u(t)\|_{\Bbb D}\le Q(\|u(0)\|_{\Bbb D})e^{-\alpha t}+Q(\|g\|_{L^2})
\end{equation}
%$$
for some monotone function $Q$. Then every trajectory $u(t)$, $t\in\R$ belonging to the kernel
 $\Cal K$ possesses the following extra regularity of time derivative:
 %$$
\begin{equation}\label{5.dra}
 \|\Dt u(t)\|_{L^{r+2}}\le Q(\|g\|_{L^2}),\ \ t\in\R
\end{equation}
 %$$
 for some $r>0$ and monotone function $Q$ which is independent of $t$ and $u$.
\end{corollary}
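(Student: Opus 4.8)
The plan is to combine the smoothing estimate \eqref{5.smdt} of Proposition \ref{Prop5.der} with the assumed $\Bbb D$-dissipativity and the fact that a complete bounded trajectory $u(\cdot)\in\Cal K$ is defined for all $t\in\R$, so that one may ``start from $-\infty$'' and gain regularity that is uniform in time.

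First I would fix an arbitrary $t\in\R$ and apply the semigroup property together with the $\Bbb D$-dissipativity to the point $u(t-1)$. Since $u(\cdot)$ is a complete bounded trajectory lying in the kernel, $u(t-1)\in\Bbb D$ and, moreover, $\|u(s)\|_{\Bbb D}\le Q(\|g\|_{L^2})=:\rho$ for all $s\in\R$ by \eqref{1.D}-type dissipativity hypothesis applied on $(-\infty,t-1]$ (letting the initial time tend to $-\infty$ kills the $e^{-\alpha t}$ term). In particular $\|\Dt u(t-1)\|_{L^2}^2\le C(\|u(t-1)\|_{\Bbb D}^2+\|g\|_{L^2}^2+1)\le Q_1(\|g\|_{L^2})$, since $\Dt u(t-1)=a\Dx u(t-1)-f(u(t-1))+g$ and both $\Dx u(t-1)$ and $f(u(t-1))$ are controlled in $L^2$ by $\|u(t-1)\|_{\Bbb D}$.

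Next I would apply Proposition \ref{Prop5.der} on the time interval $[t-1,t]$ (a translate of $[0,1]$, which is legitimate because the equation is autonomous), evaluating \eqref{5.smdt} at the right endpoint, i.e. at ``time'' $1$ measured from $t-1$. This gives
\begin{equation}
\|\Dt u(t)\|_{L^{r+2}}^{r+2}\le C\|\Dt u(t-1)\|_{L^2}^{r+2}\le C\,Q_1(\|g\|_{L^2})^{(r+2)/2},
\end{equation}
and taking $(r+2)$-th roots yields \eqref{5.dra} with a new monotone function $Q$ depending only on $\|g\|_{L^2}$ (and on $r$, $a$), independent of $t$ and of the particular trajectory $u$. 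Since $t\in\R$ was arbitrary, this is exactly the claimed bound.

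The only mild obstacle is bookkeeping: one must make sure that Proposition \ref{Prop5.der}, which is stated for strong solutions on $[0,1]$, applies verbatim to the shifted interval $[t-1,t]$ --- this is immediate from autonomy of \eqref{1.main} --- and that the $\Bbb D$-dissipativity hypothesis genuinely forces $\sup_{s\in\R}\|u(s)\|_{\Bbb D}\le Q(\|g\|_{L^2})$ on a complete bounded trajectory (again a standard ``pull the initial time to $-\infty$'' argument using that $\|u(s)\|_{\Bbb D}$ is finite for each $s$ since $u\in\Cal K\subset$ strong solutions). No new estimate is needed beyond those already established.
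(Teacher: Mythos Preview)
Your proposal is correct and matches the paper's intended argument: the paper states this result as an immediate corollary (no proof given), and the natural reasoning is exactly what you wrote --- use the assumed $\Bbb D$-dissipativity together with the completeness of the trajectory to get a uniform bound $\sup_{s\in\R}\|u(s)\|_{\Bbb D}\le Q(\|g\|_{L^2})$, read off a uniform $L^2$-bound for $\Dt u(t-1)=a\Dx u(t-1)-f(u(t-1))+g$, and then apply the smoothing estimate \eqref{5.smdt} of Proposition~\ref{Prop5.der} on the shifted unit interval $[t-1,t]$. The only point you might sharpen is the justification that $u(s)\in\Bbb D$ for every $s$; rather than invoking ``$\Cal K\subset$ strong solutions'' directly, note that in the paper's setting the attractor $\Cal A=\Cal K|_{t=0}$ is already known to be bounded in $\Bbb D$ (Corollary~\ref{Cor1.att-sm}), so by invariance $u(s)\in\Cal A\subset\Bbb D$ for all $s\in\R$ --- this makes the dissipativity estimate applicable without circularity.
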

This extra regularity in time can be transformed to extra regularity in space assuming that
 right-hand side $g$ is slightly more regular.

 \begin{corollary}\label{Prop5.derx} Let the assumptions of Proposition \ref{Prop5.der} hold and let,
  in addition,
  %$$
  \begin{equation}\label{5.g}
  g\in L^{q}(\Omega)
  \end{equation}
  %$$
for some $q>2$. Then, there exists $r=r(a,q)>0$ such that
%$$
\begin{equation}\label{5.xdir}
t\|\Nx u(t)\|_{L^{\frac{d(r+2)}{d-2}}}^{r+2}\le C\(\|u(0)\|_{\Bbb D}+\|g\|_{L^q}\)^{r+2}
\end{equation}
%$$
for $0\le t\le1$. In particular, if the attractor $\Cal A$ is a bounded set in $\Bbb D$ then it
 is also bounded in $W^{1,\frac{d(r+2)}{d-2}}(\Omega)$.
 \end{corollary}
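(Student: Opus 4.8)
The plan is to combine the time-regularity estimate \eqref{5.smdt} from Proposition \ref{Prop5.der} with the elliptic equation \eqref{1.ell}, viewed pointwise in time, and then apply elliptic regularity in $L^q$-spaces. First I would fix a time $T\in(0,1]$ and read \eqref{1.main} as the elliptic problem
$$
a\Dx u(T)-f(u(T))=\theta(T)-g,\qquad u(T)\big|_{\partial\Omega}=0,
$$
where $\theta=\Dt u$. The key point is that \eqref{5.smdt} gives $t^{1/(r+2)}\|\theta(t)\|_{L^{r+2}}\le C\|\theta(0)\|_{L^2}$, and $\|\theta(0)\|_{L^2}$ is controlled by $\|u(0)\|_{\Bbb D}+\|g\|_{L^2}$ via \eqref{1.nice}; together with the assumption $g\in L^q$ for some $q>2$, the right-hand side $\theta(T)-g$ lies in $L^{\min(r+2,q)}(\Omega)$ with the quantitative bound $t^{1/(r+2)}\|\theta(T)-g\|_{L^{\min(r+2,q)}}\le C(\|u(0)\|_{\Bbb D}+\|g\|_{L^q})$. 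Shrinking $r$ if necessary so that $r+2\le q$, we may simply work in $L^{r+2}$.

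Next I would exploit the monotonicity $f'(u)\ge-K$ to pass from the elliptic equation to an $L^p$ elliptic estimate on $u(T)$ itself. The standard device (used already in the proof of Corollary \ref{Cor1.d} for $p=2$, and available in $L^p$ for $1<p<\infty$ since $a$ has positive symmetric part) is to add and subtract $Ku$: write $a\Dx u-(f(u)+Ku)=-Ku+\theta-g=:h$, multiply by a suitable test object adapted to the $L^p$-duality — concretely, one tests against $|\Dx u|^{p-2}\Dx u$ or uses the $L^p$-resolvent estimate for the sectorial operator $-a\Dx$ — and uses that $f(u)+Ku$ is monotone to discard the nonlinear contribution with a favorable sign. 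This yields
$$
\|\Dx u(T)\|_{L^{r+2}}\le C\big(\|h\|_{L^{r+2}}+\text{l.o.t.}\big)\le C\big(\|u(T)\|_{L^{r+2}}+\|\theta(T)\|_{L^{r+2}}+\|g\|_{L^{r+2}}\big),
$$
and the lower-order term $\|u(T)\|_{L^{r+2}}$ is absorbed using the already-established $H^1$-bound on $u(T)$ together with the Sobolev embedding $H^1\subset L^{r+2}$, valid for $r$ small (in dimension $d\ge3$ one needs $r+2\le 2d/(d-2)$, which is automatic for $r$ small). Multiplying through by $t$ and using \eqref{5.smdt} and \eqref{1.h1} then gives
$$
t\|u(T)\|_{W^{2,r+2}}^{r+2}\le C\big(\|u(0)\|_{\Bbb D}+\|g\|_{L^q}\big)^{r+2},\qquad 0\le T\le1.
$$

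Finally, the conclusion \eqref{5.xdir} follows from the Sobolev embedding $W^{2,r+2}(\Omega)\subset W^{1,s}(\Omega)$ with $\tfrac1s=\tfrac1{r+2}-\tfrac1d$, i.e.\ $s=\tfrac{d(r+2)}{d-2}$, which is exactly the exponent in the statement; here one uses that $r$ is small so that $r+2<d$ and the embedding is of this gain-of-one-derivative type. The statement about the attractor is then immediate: if $\Cal A$ is bounded in $\Bbb D$, then applying the displayed bound at $T=1$ along trajectories on the attractor (which exist and are bounded in $\Bbb D$ by strict invariance and Corollary \ref{Cor1.att-sm}, under the hypotheses ensuring $\Cal A\subset\Bbb D$) shows $\Cal A$ is bounded in $W^{1,d(r+2)/(d-2)}(\Omega)$.

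I expect the main obstacle to be the $L^p$ elliptic estimate for the operator $a\Dx-Ku-(\,\cdot\,)$ with the monotone nonlinearity: one must justify that testing against the $L^p$-duality map is legitimate for $u(T)\in\Bbb D$ (the integration-by-parts identity $(f(u),\Dx u)=-(f'(u)\Nx u,\Nx u)$ and its $L^p$ analogue are genuinely delicate, as the authors themselves flag in connection with \eqref{0.fp}), and that the nonlinear term can be dropped with the correct sign after this pairing. The cleanest route is probably to argue on the level of the approximations $u_n$, $f_n$ from Section \ref{s.s}, where $f_n$ has sublinear growth so all manipulations are classical, obtain the estimate uniformly in $n$, and pass to the limit using the lower-semicontinuity/convexity argument already invoked for \eqref{1.connvex}. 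The choice of $r=r(a,q)$ is dictated by two competing constraints: $r$ small enough for the elliptic $L^{r+2}$-estimate and the Sobolev embeddings above, and $r+2\le q$ so that $g$ is regular enough — both are satisfied by taking $r$ sufficiently small depending on $a$ and $q$.
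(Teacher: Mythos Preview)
Your reduction to the elliptic problem $a\Dx u(T)-f(u(T))=\theta(T)-g$ with $\theta(T)-g\in L^{r+2}$ is exactly right and matches the paper. The gap is in the elliptic step that follows.

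Testing against $|\Dx u|^{p-2}\Dx u$ (with $p=r+2$) does not dispose of the nonlinearity by monotonicity. Writing $\phi=|\Dx u|^{p-2}$ and integrating by parts once,
\[
(f(u),\phi\,\Dx u)=-(\phi f'(u)\Nx u,\Nx u)-\bigl(f(u)\,\Nx\phi,\Nx u\bigr).
\]
The first term is indeed controlled by $f'(u)\ge-K$, but the second involves $\Nx\phi=(p-2)|\Dx u|^{p-4}\Dx u\,\Nx(\Dx u)$, i.e.\ third derivatives of $u$ multiplied by $f(u)$; there is no mechanism to bound this (the factor $r=p-2$ is small, but the integrand is not controlled by anything on the left). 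The alternative you mention, the linear $L^{p}$-resolvent estimate for $-a\Dx$, would require $f(u)\in L^{r+2}$ on the right-hand side, which is precisely what is not known ($u\in\Bbb D$ only gives $f(u)\in L^2$). In fact your intermediate conclusion $u(T)\in W^{2,r+2}$ is stronger than what the paper asserts and is not established anywhere in the paper; from $a\Dx u=f(u)+\tilde g$ and $f(u)\in L^2$ one cannot expect $\Dx u\in L^{r+2}$ in general.

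The paper's route (Theorem~\ref{ThA.main} in the Appendix) uses a different multiplier, $\partial_{x_i}(\partial_{x_i}u\,|\Nx u|^r)$, summed over $i$. The point of this choice is that a single integration by parts turns the nonlinear contribution into exactly $(f'(u)\Nx u,\Nx u\,|\Nx u|^r)$, with no residual term, so monotonicity handles it cleanly. The linear part then yields the weighted estimate $\int|D^2_xu|^2|\Nx u|^r$, and the Sobolev inequality applied to $|\Nx u|^{(r+2)/2}$ gives $\|\Nx u\|_{L^{d(r+2)/(d-2)}}^{r+2}$ directly --- this is the target \eqref{5.xdir}, obtained without ever passing through $W^{2,r+2}$. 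The Dirichlet boundary requires a further tangential/normal splitting (``nonlinear localization''), which is the substance of the Appendix.
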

Indeed, due to Proposition \ref{Prop5.der}, we control the $L^{r+2}$-norm of $\Dt u$. Rewriting
problem \eqref{1.main} as an elliptic
boundary value problem
$$
a\Dx u(t)-f(u(t))=\tilde g(t):=\Dt u(t)-g,
$$
we get also the control for the $L^{r+2}$-norm of $\tilde g(t)$ (point-wisely in time). Applying
the elliptic regularity result proved in Appendix (see Theorem \ref{ThA.main}) to this equation, we arrive
 at the desired estimate \eqref{5.xdir}.
\par
The obtained partial regularity results allow us to establish the crucial $L^\infty$-estimates
for critical and
 slightly supercritical growth rate of the nonlinearity $f$. Namely, the following result holds.

 \begin{theorem}\label{Th5.crit} Let the assumptions of Proposition \ref{Prop5.der} hold and let,
 in addition,  the nonlinearity
  $f$ satisfy \eqref{1.fp} with the exponent $p$ restricted by the assumption
  %$$
  \begin{equation}\label{5.crit}
p<p_{crit}+\eb,\ \ p_{crit}:=1+\frac4{d-4},\ \eb=\eb(a)>0
  \end{equation}
  %$$
  if $d\ge4$
  and the external forces $g$ satisfy \eqref{5.g} for some $q>\frac d2$. Then any weak solution $u(t)$ of
   problem \eqref{1.main} possesses the following smoothing property:
   %$$
   \begin{equation}\label{5.inf}
   \|u(t)\|_{L^\infty}\le Q_t(\|u(0)\|_{L^2}+\|g\|_{L^q}),\ \ t\in(0,1]
   \end{equation}
   %$$
   for some monotone function $Q_t$ depending on $t$, but  independent of $u$ and $g$.
\end{theorem}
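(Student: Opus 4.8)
The plan is to reduce to strong solutions, to extract from Proposition~\ref{Prop5.der} and Corollary~\ref{Prop5.derx} a spatial integrability of $u$ that is \emph{strictly} better than the one provided by $\mathbb D\subset H^2(\Omega)$, and then to run a parabolic bootstrap in mixed spaces $L^{s}_t(L^{m}_x)$ in which the slight supercriticality of $f$ is absorbed by this extra integrability; this absorption is exactly what fixes $\eb=\eb(a)$.

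First, the reduction. By Corollary~\ref{Cor1.smm} every weak solution becomes strong for $t>0$, with $\|\widehat S(t/4)u_0\|_{\mathbb D}\le Q_t(\|u_0\|_{L^2}+\|g\|_{L^2})$, so it suffices to prove \eqref{5.inf} for a strong solution on $[t/2,t]$ whose datum at $t/2$ is controlled in $\mathbb D$. If $d\le3$ this is immediate, since $\mathbb D\subset H^2(\Omega)\hookrightarrow L^\infty(\Omega)$; and if $d=4$ it already follows from Corollary~\ref{Prop5.derx} applied on $[t/4,t/2]$, which gives $u\in L^\infty((t/2,t);W^{1,\rho}(\Omega))$ with $\rho=\tfrac{d(r+2)}{d-2}=2(r+2)>4=d$, hence $u(t)\in W^{1,\rho}\hookrightarrow L^\infty$. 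So from now on $d\ge5$.

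Second, the starting point. Translating Proposition~\ref{Prop5.der} and Corollary~\ref{Prop5.derx} to $[t/4,t/2]$ and using the $\mathbb D$-bound on $u(t/4)$, we get, with $r>0$ and $\delta=\delta(a)>0$,
$$
u\in L^\infty\big((t/2,t);W^{1,\rho}(\Omega)\big)\hookrightarrow L^\infty\big((t/2,t);L^{\rho^*}(\Omega)\big),\qquad \rho^*=\tfrac{d(r+2)}{d-4-r}=\tfrac{2d}{d-4}+\delta,
$$
with norm bounded by $Q_t(\|u_0\|_{L^2}+\|g\|_{L^q})$. The key point is that $\rho^*$ exceeds the ``critical'' exponent $\tfrac{2d}{d-4}$ (the one produced by $\mathbb D\subset H^2$) by the \emph{definite} amount $\delta$.

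Third, the bootstrap, which is also the main obstacle. I use the maximal $L^{s}_t(L^{m}_x)$-regularity of $\Dt-a\Dx$ with Dirichlet boundary conditions — available since $a+a^*>0$ makes $-a\Dx$ $R$-sectorial of angle $<\pi/2$ — together with the anisotropic parabolic Sobolev embeddings; equivalently one may run the estimate with the time weights $t^N$ exactly as in the proof of Theorem~\ref{Th1.strange}. Assign to $L^{s_t}_t(L^{s_x}_x)$ the Sobolev index $D:=\tfrac2{s_t}+\tfrac d{s_x}$. If $u$ lies in such a space then, by \eqref{5.crit}, $f(u)\in L^{s_t/p}_t(L^{s_x/p}_x)$, of index $pD$, while $g\in L^q(\Omega)$, being time independent, has index $\tfrac dq<2$; hence $-f(u)+g$ has index $\max(pD,\tfrac dq)$, maximal regularity buys two derivatives, and a parabolic Sobolev embedding followed by a trace argument restoring $L^\infty$ in time (at the price of an arbitrarily small loss of integrability) places $u$ in a space of index $D'=\max(pD,\tfrac dq)-2$. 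Starting from $D_0=\tfrac d{\rho^*}=\tfrac{d-4-r}{r+2}$, the elementary inequality $2(d-4-r)<(d-4)(r+2)$ gives $D_0<\tfrac{d-4}{2}=\tfrac2{p_{crit}-1}$; thus, provided $p<p_{crit}+\eb$ with $\eb=\eb(a)>0$ small enough to keep $D_0<\tfrac2{p-1}$, and since $\tfrac2{p-1}$ is the \emph{repelling} fixed point of $D\mapsto pD-2$ while $\tfrac dq-2<0$, the indices $D_k$ strictly decrease and, after finitely many steps, $pD_k<2$. Since also $\tfrac dq<2$, the right-hand side then lies in $L^{s}_t(L^{m}_x)$ with $m>\tfrac d2$, so $u\in L^{s}_t(W^{2,m}_x)\cap W^{1,s}_t(L^m_x)$ with $m>\tfrac d2$ and $s$ large, hence $u\in C([t/2+ct,t];C(\overline\Omega))$; evaluating at $t$ and collecting the finitely many time shrinkings (or weights) yields \eqref{5.inf}. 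The genuinely delicate points are the rigidity of this arithmetic — the single slack $\delta$ must beat the loss $O(\eb)$ in the threshold $\tfrac2{p-1}$, which is precisely why one cannot start from $\mathbb D\subset H^2$ alone (there $D_0=\tfrac{d-4}{2}$ sits exactly at the fixed point when $p=p_{crit}$) — and the mixed-norm linear machinery for the vector system (maximal $L^{s_t}_t(L^{s_x}_x)$-regularity for a matrix $a$ with only $a+a^*>0$, the trace space of the $\mathbb D$-datum at the first step, the $L^\infty$-in-time endpoint), none of which uses the structure of $f$.
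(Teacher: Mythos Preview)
Your proposal is correct and follows essentially the same route as the paper: reduce to strong solutions via Corollary~\ref{Cor1.smm}, extract the improved starting integrability $u\in L^\infty_t(L^{q_0}_x)$ with $q_0=\tfrac{d(r+2)}{d-4-r}>\tfrac{2d}{d-4}$ from Corollary~\ref{Prop5.derx}, and then bootstrap via linear parabolic regularity, the improvement condition being exactly your $D_0<\tfrac{2}{p-1}$ (equivalently, the paper's monotonicity check $p-\tfrac{q_0}{d}(2-\kappa)<1$ for the iterates $q_{k+1}=\tfrac{q_kd}{pd-q_k(2-\kappa)}$). The only difference is cosmetic: the paper stays in $L^\infty_t(L^{q_k}_x)$ throughout and uses the elementary semigroup smoothing estimate $\|u\|_{L^\infty(T,T+1;W^{2-\kappa,s})}\le C\bigl(\|u\|_{L^2(L^2)}+\|h\|_{L^\infty(L^s)}\bigr)$ in place of mixed-norm maximal regularity, thereby sidestepping the $R$-sectoriality machinery and the trace-back-to-$L^\infty_t$ step you flag as delicate.
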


\begin{proof}Note that, due to estimate \eqref{1.sm-fin} we may assume from the very beginning that
 $u_0\in\Bbb D$ and work with strong solutions only. The derivation of \eqref{5.inf} can be done
  by the standard bootstrapping arguments by iterating the classical interior regularity result for the
  linear parabolic equation
  %$$
  \begin{equation}
\Dt u-a\Dx u=h(t),
  \end{equation}
  %$$
  namely,
  %$$
  \begin{equation}\label{5.linreg}
\|u\|_{L^\infty(T,T+1;W^{2-\kappa,s})}\le C_{T,s,\kappa}\(\|u\|_{L^2(0,T+1;L^2)}+
\|h\|_{L^\infty(0,T+1;L^s)}\)
  \end{equation}
  %$$
  where $\kappa>0$ is arbitrarily small, $1<s<\infty$ and $T>0$. This estimate, in turn, can be easily
   deduced from the fact that this linear equation generates an analytic semigroup in $L^s(\Omega)$ or from
   the maximal $L^q(t,t+1;L^s)$-regularity estimate for parabolic equations (see e.g., \cite{tri}),
    so we left the details to the reader).
\par
From this smoothing property and Sobolev embedding theorem, we derive the iterative estimate
%$$
\begin{equation}
\|u\|_{L^\infty(T_{k+1},1;L^{q_{k+1}})}\le C_k\(\|u\|_{L^2(0,1;L^2)}+\|h\|_{L^\infty(T_k,1;L^{s_k})}\)
\end{equation}
%$$
where $T_{k+1}>T_k$ and
$$
q_{k+1}:=\min\left\{\infty, \frac{s_k d}{d-s_k(2-\kappa)}\right\}.
$$
In our situation $h(t)=g-f(u(t))$ and, due to our growth restrictions, we have
$$
\|h\|_{L^{s_k}}\le C(\|g\|_{L^q}+1+\|u\|^p_{L^{q_k}})
$$
where $s_k:=\min\{q,q_kp^{-1}\}$. Thus, in order to prove the theorem, it is sufficient
 to verify that the sequence $q_k$ defined via
$$
q_0=\frac{d(r+2)}{d-r-4},\ \ q_{k+1}=\frac{q_k d}{pd-q_k(2-\kappa)},\ \ \kappa\ll1
$$
will become large than $q>\frac d2$ in finitely many steps (we have used here estimate
 \eqref{5.xdir} and the embedding $W^{1,\frac{d(r+2)}{d-2}}\subset L^{q_0}$
 to initialize the iterations and the embedding $W^{2-\kappa,q}\subset L^\infty$ which holds for
  sufficiently small $\kappa$
 due to the condition $q>\frac d2$).
 \par
 Obviously this sequence will be monotone increasing if (and only if)
 $$
p-\frac{q_0}d(2-\kappa)<1.
 $$
 Then it must converge to $+\infty$, so we only need to verify the last inequality. Using assumption
  \eqref{5.crit} and the
  explicit formula for $q_0$, we only need the inequality
  $$
  \frac4{d-4}+\eb-(2-\kappa)\frac{r+2}{d-r-4}<0.
  $$
It remains to note that the last inequality is satisfied if $\kappa\ll1$ and $\eb<\eb_0=\eb_0(r)$ for
some positive $\eb_0$ if $r>0$. This finishes the proof of the theorem.
\end{proof}

\begin{remark}\label{Rem5.scrit} The growth rate of the nonlinearity is no more important
 if the $L^\infty$-estimate for
the solutions is obtained, so further regularity can be obtained by bootstrapping exactly
 as in the subcritical case. Thus, under the growth restriction \eqref{5.crit}, the actual regularity of
  a solution is determined by the smoothness of $\Omega$, $f$ and $g$ only (if all of them are
   $C^\infty$-smooth,
   the solutions will be also $C^\infty$-smooth). In other words, we may say that the critical growth
   exponent for $f$  in our problem \eqref{1.main} is slightly {\it larger}
    than $p_{crit}=1+\frac4{d-4}$.
    We also note that the value $\eb=\eb(a)$ somehow measures how far the matrix $a$ is  from
    the scalar matrix.
    It is easy to show that $\eb(a)=\infty$ if $a$ is scalar.
\end{remark}

We now turn to the question of whether or not the attraction to $\Cal A$ holds in the space $\Bbb D$.
Since in this
 case we at least need the dissipativity of our semigroup in $\Bbb D$, we assume that $f$ has
  a polynomial growth rate (i.e., that \eqref{1.fp} is satisfied for some $p\in\R_+$). Of course, the most
  interesting here is the supercritical case when the assumption \eqref{5.crit} is not satisfied.
   Unfortunately, we do not know the answer on this question in general and have to pose some
   extra restrictions which however look natural. Namely, we assume that
    the nonlinearity also satisfies
    %$$
    \begin{equation}\label{5.fprime}
|f'(u)|\le C(1+|f(u)|+|u|),\ \ u\in\R^k.
    \end{equation}
    %$$
Then, the following result holds.
\begin{theorem}\label{Th5.D} Let the assumptions of Proposition \ref{Prop5.der} hold and let also
 assumptions \eqref{1.f} and \eqref{5.fprime} be satisfied. Then the image $\widehat S(1)B_R$ of
  any closed ball $B_R$ of
  radius $R$ in $H$ is a compact set in $\Bbb D$. In particular the global attractor $\Cal A$ is
  compact in $\Bbb D$
   and attracts in the strong topology of $\Bbb D$ as well.
\end{theorem}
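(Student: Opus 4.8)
It suffices to show that for every $R>0$ and every sequence $u_{0,n}\in B_R$, the sequence $u_n(1):=\widehat S(1)u_{0,n}$ has a subsequence converging strongly in $\mathbb D$; the assertions about $\Cal A$ are then routine. First I would use the instantaneous $H\to\mathbb D$ smoothing of Corollary \ref{Cor1.smm} to restart the flow at $t=\tfrac12$: the $u_n$ are then strong solutions on $[\tfrac12,1]$ with uniformly bounded $\mathbb D$-norm, so all a priori estimates of Section \ref{s1.1} hold uniformly in $n$, and, applying Proposition \ref{Prop5.der} with initial time $\tfrac12$, the time derivatives $\theta_n:=\Dt u_n$ are uniformly bounded in $L^\infty([\tfrac34,1];L^{r+2}(\Omega))$ with $r>0$ depending only on $a$. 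This strict gain $r>0$ over the $L^2$-control of Lemma \ref{Lem1.theta} is the crucial point. Passing to a subsequence, $u_n(\tfrac12)\rightharpoondown w_0$ weakly in $\mathbb D$ (hence strongly in $L^2$ and $w_0\in\mathbb D$); by the $L^2$-Lipschitz continuity \eqref{1.s-lip} the limit $\bar u:=\widehat S(\cdot-\tfrac12)w_0$ is a strong solution on $[\tfrac12,1]$, and from the uniform $H^2$-bound together with the uniform Lipschitz-in-time bound ($\theta_n$ bounded in $L^\infty(L^2)$) a compactness (Arzel\`a--Ascoli) argument gives $u_n\to\bar u$ in $C([\tfrac12,1];H^{2-\delta})$ for every $\delta>0$. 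In particular $u_n\to\bar u$ a.e., $\Nx u_n(1)\to\Nx\bar u$ strongly in $L^2$, $u_n(1)\rightharpoondown\bar u$ weakly in $H^2$, $f(u_n(1))\rightharpoondown f(\bar u)$ weakly in $L^2$ (by the a.e. convergence and the uniform $L^2$-bound), and $\theta_n(1)\rightharpoondown\bar\theta(1):=\Dt\bar u(1)$ weakly in $L^2$.

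\textbf{Energy (Ball-type) argument.} The role of the extra hypothesis \eqref{5.fprime} is precisely to guarantee the validity of the integration-by-parts formula $(f(u),\Dx u)=-(f'(u)\Nx u,\Nx u)$ for $u\in\mathbb D$, so that the $H^1$-energy identity \eqref{0.h1} and the energy identity associated with \eqref{1.theta} hold as genuine \emph{equalities} for our strong solutions. Step one: integrate \eqref{0.h1} for $u_n$ over $[s,1]$ (with $\tfrac34<s<1$) and pass to the $\liminf$; the endpoint terms $\|\Nx u_n(\cdot)\|^2$ and the forcing term $\int_s^1(g,\Dx u_n)$ converge (strong $H^1$-convergence, weak $L^2$-convergence of $\Dx u_n$, $g$ fixed), the diffusion term is weakly lower semicontinuous since $a+a^*>0$, and $(f'(u_n)\Nx u_n,\Nx u_n)$ is controlled from below by Fatou after adding $K\|\Nx u_n\|^2$ (cf. \eqref{1.connvex}); comparing with the \emph{same} identity for $\bar u$ forces all these $\liminf$'s to be limits, whence $\Dx u_n\to\Dx\bar u$ strongly in $L^2((s,1)\times\Omega)$, i.e. $u_n\to\bar u$ strongly in $L^2(s,1;H^2)$. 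Step two: strongly convergent sequences are equi-integrable, so $|\Dx u_n|^2$ is equi-integrable on $(s,1)\times\Omega$; moreover $|\theta_n|^2$ is equi-integrable there because $\theta_n$ is bounded in $L^\infty(L^{r+2})$ with $r>0$; writing $f(u_n)=a\Dx u_n-\theta_n+g$, it follows that $|f(u_n)|^2$ is equi-integrable, so by Vitali's theorem and the a.e. convergence, $f(u_n)\to f(\bar u)$ strongly in $L^2(s,1;L^2)$, and consequently $\theta_n\to\bar\theta$ strongly in $L^2(s,1;L^2)$ as well.

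\textbf{Upgrading to $t=1$.} Multiply the (honest) energy identity associated with \eqref{1.theta} for $\theta_n$ by the weight $(t-s)$ and integrate over $[s,1]$; this expresses $\tfrac12\|\theta_n(1)\|^2_{L^2}$ in terms of $\int_s^1\|\theta_n\|_{L^2}^2\,dt$ minus the weighted diffusion and $f'$-terms. Taking the $\limsup$ and using the strong $L^2(L^2)$-convergence of $\theta_n$, the weak lower semicontinuity of the diffusion term and Fatou for the $f'$-term, then comparing with the corresponding identity for $\bar u$, one gets $\|\theta_n(1)\|_{L^2}\to\|\bar\theta(1)\|_{L^2}$, which combined with $\theta_n(1)\rightharpoondown\bar\theta(1)$ gives $\theta_n(1)\to\bar\theta(1)$ strongly in $L^2$. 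Feeding this into the elliptic identity at $t=1$, namely $(a\Dx u_n(1),\Dx u_n(1))+(f'(u_n(1))\Nx u_n(1),\Nx u_n(1))=(\theta_n(1)-g,\Dx u_n(1))$ (obtained from \eqref{1.ell} using again the integration-by-parts formula), whose right-hand side now converges (strong $\times$ weak), and comparing $\liminf$'s with the same identity for $\bar u$ exactly as in step one, yields $(a\Dx u_n(1),\Dx u_n(1))\to(a\Dx\bar u,\Dx\bar u)$, hence $\Dx u_n(1)\to\Dx\bar u$ strongly in $L^2$, i.e. $u_n(1)\to\bar u$ in $H^2$. Finally $f(u_n(1))=a\Dx u_n(1)-\theta_n(1)+g\to f(\bar u)$ strongly in $L^2$, so $u_n(1)\to\bar u$ strongly in $\mathbb D$, proving precompactness of $\widehat S(1)B_R$ in $\mathbb D$. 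The statements on $\Cal A$ follow: $\Cal A=\widehat S(1)\Cal A$ is bounded in $\mathbb D$ by Corollary \ref{Cor1.att-sm}, hence precompact in $\mathbb D$, and it is closed in $\mathbb D$ since it is closed in $L^2$, so it is compact in $\mathbb D$; $\mathbb D$-attraction is then deduced from the $L^2$-attraction of Theorem \ref{Th1.attr} via the factorization $\widehat S(t_n)u_{0,n}=\widehat S(1)\bigl(\widehat S(t_n-1)u_{0,n}\bigr)$ and the asymptotic compactness just established.

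\textbf{Main obstacle.} The difficulty — and the reason the proof is not purely routine — is passing to the limit in $(\theta_n(1),\Dx u_n(1))$, $(f'(u_n)\Nx u_n,\Nx u_n)$, $(f'(u_n)\theta_n,\theta_n)$ and $\|f(u_n)\|_{L^2}^2$, each of which is a product or quadratic expression in only weakly convergent sequences, so that neither weak continuity nor a crude bound suffices. The resolution rests on three ingredients working together: the energy identities being honest equalities, which is exactly what \eqref{5.fprime} buys through validity of the integration-by-parts formula; the one-sided Fatou/convexity bound for the $f'$-terms coming from $f'(u)\ge-K$ and the a.e. convergence; and, most importantly, the strict improvement $r>0$ in Proposition \ref{Prop5.der}, without which $|\theta_n|^2$ would be bounded only in $L^1$ and the equi-integrability of $|f(u_n)|^2$, on which the whole endpoint argument hinges, would fail.
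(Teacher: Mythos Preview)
Your proof is correct, and it coincides with the paper's argument in the final elliptic step (the energy identity $(a\Dx u_n(1),\Dx u_n(1))+(f'(u_n(1))\Nx u_n(1),\Nx u_n(1))=(\theta_n(1)-g,\Dx u_n(1))$ together with the integration-by-parts formula of Lemma~\ref{Lem5.int}), but it reaches the key input---strong $L^2$-convergence of $\theta_n(1)=\Dt u_n(1)$---by a genuinely different route. The paper's Step~1 is a direct compactness argument: assumption \eqref{5.fprime} gives $\|f'(u_n)\|_{L^2}\le C$, and combining this with the $L^{r+2}$-bound on $\theta_n$ from Proposition~\ref{Prop5.der} yields $\|f'(u_n)\theta_n\|_{L^s}\le C$ for some $s>1$; interior $L^s$-parabolic regularity for $\Dt\theta-a\Dx\theta=-f'(u)\theta$ then places $\theta_n(1)$ in $W^{2(1-1/s),s}$, and the interpolation $W^{1-\kappa,1}\cap L^{r+2}\hookrightarrow H^{\eps}$ gives compactness in $L^2$. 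You instead run a two-tier Ball/energy argument: the parabolic $H^1$-energy identity first yields strong $L^2_tH^2_x$-convergence of $u_n$, equi-integrability of $|f(u_n)|^2=|a\Dx u_n-\theta_n+g|^2$ then upgrades this to strong $L^2_tL^2_x$-convergence of $\theta_n$, and finally the $\theta$-energy identity \eqref{5.paren} (weighted by $t-s$) delivers $\|\theta_n(1)\|_{L^2}\to\|\bar\theta(1)\|_{L^2}$ at the endpoint. The paper explicitly acknowledges your route as a valid alternative in the remark following the proof, noting that it may be preferable for higher-order equations where the multiplication by $\theta|\theta|^r$ behind Proposition~\ref{Prop5.der} is unavailable; for \eqref{1.main} itself the paper's parabolic-regularity argument is somewhat shorter and avoids having to justify \eqref{5.paren} as an exact identity.

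One minor correction to your commentary: the claim that Proposition~\ref{Prop5.der} is ``crucial'' for the equi-integrability of $|\theta_n|^2$ on $(s,1)\times\Omega$ is overstated. The uniform bound $\theta_n\in L^\infty(L^2)\cap L^2(H^1)$ from Lemma~\ref{Lem1.theta} already gives, by interpolation and Sobolev embedding, $\theta_n$ bounded in $L^{2(d+2)/d}((s,1)\times\Omega)$ (for $d\ge3$; the low-dimensional case is even easier), hence $|\theta_n|^2$ bounded in $L^{(d+2)/d}$ and therefore equi-integrable. Your argument thus goes through using only Lemma~\ref{Lem1.theta}; Proposition~\ref{Prop5.der} is not actually needed in your version, whereas in the paper's proof it is genuinely used (to make $s>1$ and to feed the interpolation \eqref{5.int-in}).
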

\begin{proof} We only need to prove the compactness of $\hat S(1)B_R$, the rest is a corollary of
 the standard
 attractor's existence theorem. The fact that this set is closed is also standard and
  we left it to the reader. So, we will only check pre-compactness below.
  \par
  The proof of this fact is a combination of parabolic regularity estimates which gives the
   pre-compactness of the set
   $$
   \Cal B_R:=\{\Dt u(1),\, u(t)=\widehat S(t)u_0,\ u_0\in B_R\}
   $$
    in $L^2(\Omega)$
   and energy type estimates for the elliptic equation which then give the desired compactness
   in $H^2(\Omega)$.
\par
{\it Step 1. $\Cal B_R$ is compact in $L^2(\Omega)$.} We already know that $\Cal B_R$ is a
bounded set in $L^{2+r}(\Omega)$,
 due to Proposition \ref{Prop5.der} and Corollary \ref{Cor1.smm}. In order to get the desired compactness
  we will use the standard interpolation embedding:
  %$$
\begin{equation}\label{5.int-in}
 W^{1-\kappa,1}(\Omega)\cap L^{r+2}(\Omega)\subset H^{(1-\kappa)\frac r{2(r+1)}}(\Omega),
\end{equation}
  %$$
  see \cite{tri}. This embedding together with the compactness of the embedding $H^{\eb}\subset L^2$ will
   give the desired result if we prove boundedness of $\Cal B_R$ in $W^{1-\kappa,1}$
    for some $0<\kappa<1$. To this end, we note that according to Corollary \ref{Cor1.smm},
    $u(t)\in\Bbb D$ for $t\in[1/2,1]$ and is uniformly bounded there if $u_0\in B_R$. Thus, according
     to assumption \eqref{5.fprime}, $\|f'(u(t))\|_{L^2}$ is uniformly bounded. Since the $L^{2+r}$-norm
      of $\Dt u(t)$ is also bounded due to Proposition \ref{Prop5.der}, we have the estimate
      $$
      \|f'(u)\Dt u\|_{L^s}\le C_R,\ \ t\in[1/2,1],\ \ \frac 1s=\frac12+\frac1{r+2}.
      $$
Thus, applying the  $L^s$ interior estimate (with  $s>1$) to equation
$$
\Dt\theta-a\Dx\theta=-f'(u(t))\Dt u(t),
$$
we arrive at
%$$
\begin{multline}
\|\theta(1)\|_{W^{2(1-\frac1s),s}}\le C(\|\Dt\theta\|_{L^s(3/4,1;L^s)}+\|\Dx\theta\|_{L^s(3/4,1;L^s)})
\le\\\le
 C(\|f'(u)\Dt u\|_{L^s(1/2,1;L^s)}+\|\Dt u\|_{L^2(1/2;1;L^2)})\le C_R'.
\end{multline}
%$$
This estimate gives the desired boundedness of $\Cal B_R$ in $W^{1-\kappa,1}(\Omega)$ and completes
 the first step of the proof.
 \par
 {\it Step 2. Compactness in $H^2$.} Let us consider a sequence of solutions $u_n(t)$, $u_n(0)\in B_R$ and
 find a subsequence which is convergent strongly in $H^2$ to some solution $u(t)$. Due to the
  result of Step 1, we may assume without loss of generality that $u_n(1)\to u(1)$ weakly in $H^2$ and
  $\Dt u_n(1)\to\Dt u(1)$ {\it strongly} in $L^2$. In other words, we need to pass to the limit $n\to\infty$
  in the semilinear elliptic equation
  %$$
\begin{equation}\label{5.ell}
  a\Dx u_n(1)-f(u_n(1))=h_n:=\Dt u_n(1)-g.
\end{equation}
  %$$
Without loss of generality we may assume also that $f'(u)\ge0$.
We will utilize the so-called energy method. Assume at this moment that we are able to integrate by
 parts and get
%$$
\begin{equation}\label{5.partint}
(f(u),\Dx u)=-(f'(u)\Nx u,\Nx u),\ \ u\in\Bbb D,
\end{equation}
%$$
this formula will be verified later at the end of the proof. Then, multiplying \eqref{5.ell}
 by $\Dx u_n$ and integrating over $x$,
we get the energy identity
%$$
\begin{equation}
(a\Dx u_n,\Dx u_n)+(f'(u_n)\Nx u_n,\Nx u_n)=(h_n,\Dx u_n).
\end{equation}
%$$
Our aim here is to pass to the limit $n\to\infty$ in this equality and compare it with the energy
 equality for the limit solution. Indeed, using the convexity arguments (similarly to \eqref{1.connvex},
  we get
%$$
\begin{multline}
 (a\Dx u(1),\Dx u(1))\le\liminf_{n\to\infty}(a\Dx u_n(1),\Dx u_n(1)),\\ (f'(u)\Nx u,\Nx u)\le
  \liminf_{n\to\infty}(f'(u_n)\Nx u_n,\Nx u_n)
\end{multline}
%$$
and due to the strong convergence $h_n\to h$, we have
$$
(h,\Dx u(1))=\lim_{n\to\infty}(h_n,\Dx u_n(1)).
$$
Then, the comparison with the limit energy identity
$$
(a\Dx u,\Dx u)+(f'(u)\Nx u,\Nx u)=(h,\Dx u(1))
$$
shows that we must have
$$
\lim_{n\to\infty}(a\Dx u_n(1),\Dx u_n(1))=(a\Dx u(1),\Dx u(1)).
$$
Together with the weak convergence $\Dx u_n(1)\to\Dx u(1)$ this gives the strong
convergence $\Dx u_n(1)\to\Dx u(1)$ in $L^2$ and, therefore, the strong
 convergence $u_n(1)\to u(1)$ in $H^2$. From the equation \eqref{5.ell} we finally establish that
  $f(u_n(1))\to f(u(1))$ also strongly. Thus, the compactness of $\hat S(1)B_R$ in $\Bbb D$ is proved. Thus, the theorem is
   proved by modulo of the integration by parts formula \eqref{5.partint} which we prove in the
   following lemma.
\end{proof}
   \begin{lemma}\label{Lem5.int} Let the nonlinearity $f$ satisfy the assumptions of
    Theorem \ref{Th5.D}. Then integration by
    parts \eqref{5.partint} is valid for every $u\in\Bbb D$.
   \end{lemma}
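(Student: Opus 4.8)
The plan is to prove the identity $(f(u),\Dx u)=-(f'(u)\Nx u,\Nx u)$ for $u\in\Bbb D$ by a truncation-and-approximation argument, using the structural assumption \eqref{5.fprime} precisely to control the error terms produced by the truncation. First I would introduce the smooth cut-off functions $\psi_R\in C_0^\infty(\R^k,\R^k)$ from Remark \ref{Rem.w-sol} (or rather their scalar analogues), setting $f_R(u):=\psi_R(u)f(u)$ where $\psi_R$ equals the identity on $\{|u|\le R\}$ and is supported in $\{|u|\le 2R\}$. Each $f_R$ is then bounded with bounded derivative, so the integration-by-parts formula $(f_R(u),\Dx u)=-(f_R'(u)\Nx u,\Nx u)$ is classical for $u\in H^2\cap H^1_0$ (approximate $u$ in $H^2$ by smooth functions vanishing on $\partial\Omega$; both sides are continuous under such approximation since $f_R,f_R'$ are bounded). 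The task is then to pass to the limit $R\to\infty$ on both sides.

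For the left-hand side, $f_R(u)\to f(u)$ pointwise a.e.\ and $|f_R(u)|\le|f(u)|\in L^2(\Omega)$ since $u\in\Bbb D$, while $\Dx u\in L^2(\Omega)$; hence dominated convergence gives $(f_R(u),\Dx u)\to(f(u),\Dx u)$. For the right-hand side I would write $f_R'(u)=\psi_R(u)f'(u)+(\psi_R'(u)\otimes f(u))$ and examine the two pieces separately. The term $\psi_R(u)f'(u)\Nx u.\Nx u$: here $\psi_R(u)f'(u)\to f'(u)$ pointwise a.e., and using \eqref{5.fprime} we have the pointwise bound $|\psi_R(u)f'(u)|\le|f'(u)|\le C(1+|f(u)|+|u|)$, which lies in $L^2(\Omega)$; combined with $\Nx u\in L^4$? --- here is where the dimension enters, so more carefully: $|f'(u)\Nx u.\Nx u|$ is dominated by $C(1+|f(u)|+|u|)|\Nx u|^2$, and I would verify this product is in $L^1(\Omega)$ using $u\in H^2\cap H^1_0\hookrightarrow W^{1,q}$ for the appropriate $q$ together with $f(u),u\in L^2$; then dominated convergence applies. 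The remaining term $\psi_R'(u)|f(u)||\Nx u|^2$ is supported on $\{R\le|u|\le 2R\}$ and $|\psi_R'(u)|\le C/R$, so it is bounded by $\frac{C}{R}\int_{\{|u|\ge R\}}|f(u)||\Nx u|^2\,dx$, which tends to $0$ as $R\to\infty$ (the integrand is a fixed $L^1$ function by the previous paragraph's bound, and we integrate over a shrinking set while also gaining the factor $1/R$).

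The main obstacle I expect is the integrability of $|f(u)||\Nx u|^2$ in $L^1(\Omega)$, i.e.\ justifying that $f'(u)\Nx u.\Nx u\in L^1$ at all so that the right-hand side even makes sense; this is exactly the reason assumption \eqref{5.fprime} is imposed, as the authors flag in the remarks following Theorem \ref{Th0.main3}. Concretely, using $u\in H^2(\Omega)$ one has $\Nx u\in H^1(\Omega)\hookrightarrow L^{2d/(d-2)}(\Omega)$, hence $|\Nx u|^2\in L^{d/(d-2)}(\Omega)$; pairing with $|f(u)|\in L^2(\Omega)$ via Hölder requires $\tfrac{d-2}{d}+\tfrac12\le 1$, i.e.\ $d\le 4$, which is too restrictive. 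The genuine argument must instead exploit that $f(u)=a\Dx u-\tilde g\in L^2$ \emph{together with} the elliptic regularity --- or, more robustly, interpolate: control $\||\Nx u|^2\|$ in a better space by using both $\Nx u\in L^{2d/(d-2)}$ and the a priori higher integrability of $\Nx u$ coming from Corollary \ref{Prop5.derx}, while simultaneously using that $|f(u)|\le C(|f(u).u|+|u|+1)$-type gains are \emph{not} available here. I would therefore route the estimate through the partial regularity of the elliptic problem (Theorem \ref{ThA.main} in the Appendix) to place $\Nx u$ in $L^{d(r+2)/(d-2)}$ and $f(u)$ correspondingly better, closing the Hölder pairing for all relevant $d$; this is the delicate point and the place where the detailed bookkeeping lives.
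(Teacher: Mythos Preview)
Your truncation-in-$|u|$ approach is natural, but the gap you flag at the end is real and your proposed fix does not close it. To make the dominated-convergence step work (both for the main term $\psi_R(u)f'(u)\Nx u.\Nx u$ and for the error term $\psi_R'(u)\otimes f(u)$) you need $|f'(u)|\,|\Nx u|^2\in L^1(\Omega)$, which via \eqref{5.fprime} reduces to $|f(u)|\,|\Nx u|^2\in L^1(\Omega)$. As you compute, H\"older with $f(u)\in L^2$ and $|\Nx u|^2\in L^{d/(d-2)}$ only closes for $d\le 4$. Your remedy --- invoking Corollary \ref{Prop5.derx}/Theorem \ref{ThA.main} to gain integrability of $\Nx u$ --- does not apply here: those results assume the right-hand side of the elliptic equation lies in $L^q$ with $q>2$, whereas for a \emph{generic} $u\in\Bbb D$ one only has $a\Dx u-f(u)\in L^2$ (the extra integrability in Corollary \ref{Prop5.derx} came from the smoothing of $\Dt u$ along actual trajectories, which is unavailable for an arbitrary element of $\Bbb D$). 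So for $d>4$ your argument as written does not go through.

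The paper sidesteps this obstruction by changing the point of truncation. It first establishes the identity $(f(u),\divv W)=-(f'(u)\Nx u,W)$ for all \emph{bounded} vector fields $W\in H^1\cap L^\infty$, via the approximations $f_n$ and $u_n$ of Lemmas \ref{Lem1.fn}--\ref{Lem1.au} (here \eqref{5.fprime} gives $f'(u)\Nx u\in L^{1+\eb}$, which suffices to pass to the limit against a bounded $W$). Then, instead of cutting off where $|u|$ is large, the paper cuts off where $|\Nx u|$ is large: it takes $W_n=\varphi_n(|\Nx u|^2)\Nx u$ with a logarithmic cut-off $\varphi_n$ satisfying $|\varphi_n'(z)z|\le 1$. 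This makes $W_n\in L^\infty\cap H^1$ with $\divv W_n\rightharpoondown\Dx u$ in $L^2$, so the left-hand side converges. For the right-hand side, assuming without loss of generality $f'(u)\ge0$, the integrands $\varphi_n(|\Nx u|^2)f'(u)\Nx u.\Nx u$ are non-negative and increase with $n$, so Levi's monotone convergence applies --- and since the left-hand side has a finite limit, this simultaneously proves $f'(u)\Nx u.\Nx u\in L^1$ rather than requiring it as input. The monotone-convergence step, made possible by cutting off the gradient rather than $u$, is precisely the idea your argument is missing.
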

\begin{proof}[Proof of the lemma] We first establish the identity
%$$
\begin{equation}\label{5.smooth}
(f(u),\divv W)=-(f'(u)\Nx u, W)
\end{equation}
%$$
for all vector-fields $W\in C^\infty(\bar\Omega)$. Due to our assumption \eqref{5.fprime} both parts
 of this equality make sense. The identity may be proved by approximating the function $f$
  by "good" functions $f_n$ as in Lemma \ref{Lem1.fn}. Since $f$ has a polynomial growth, we may take, say,
$$
\Psi(z)=e^{\sqrt{z+1}}
$$
and this allows us to keep also assumption \eqref{5.fprime} uniformly in $n$. Let $u_n$ be the corresponding
 approximating functions for $u$ constructed as in \eqref{1.ell1}. Then, we first verify
 the integration by parts for $f_n$ and $u_n$ (which is trivial since everything is smooth) and after that
  pass to the limit $n\to\infty$ (which is also straightforward since as in Lemma \ref{Lem1.au}, we have
  weak convergence $f_n(u_n)\to f(u)$ in $L^2$ and, due to our assumption \eqref{5.fprime}, we also have
  weak convergence $f'_n(u_n)\Dt u_n$ to $f'(u)\Nx u$ in $L^{1+\eb}$ for small positive $\eb$. Thus, the
  integration by parts \eqref{5.smooth} is verified for smooth vector fields $W$.
  \par
Note that the $C^\infty$ smoothness assumption on the vector field $W$ can be relaxed till
$$
W\in H^1(\Omega)\cap L^\infty(\Omega)
$$
by density arguments.
\par
We now construct a sequence of Lipschitz continuous cut-off functions
%$$
\begin{equation}\label{5.cut}
\varphi_n(z)=\begin{cases}
             1,\ \ z\le n,\\ 1-\ln\frac zn,\ \ z\in[n,en],\\ 0,\ \ z>ne
            \end{cases}
\end{equation}
%$$
Then, the sequence $\varphi_n(z)$ monotone increasing in $n$ and is convergent point-wise to one.
Moreover, the
following estimate holds:
%$$
\begin{equation}\label{5.good}
|\varphi'(z)z|\le 1,\ \ z\in\R
\end{equation}
%$$
(there are no problems to construct similar {\it smooth} sequence, but we prefer to give
relatively simple
explicit expression). Then, we define a special vector-field $W=W_n$ as follows:
$$
W_n(x):=\varphi_n(|\Nx u|^2)\Nx u.
$$
Then, as simple calculation shows, $W\in L^\infty(\Omega)$ and, due to condition \eqref{5.good},
$$
\|\Nx W_n\|_{L^2}\le C\|D^2_x u\|
$$
where the constant $C$ is independent of $n$. Thus, we may conclude that
 $\divv W_n\to \Dx u$ weakly in $L^2(\Omega)$. Moreover, we may put $W_n$ to the integration by parts
  formula \eqref{5.smooth} and get
  $$
  (f(u),\divv W_n)=-(\varphi_n(|\Nx u|^2)f'(u)\Nx u,\Nx u).
  $$
It only remains to pass to the limit $n\to\infty$ here. Passing to the limit in the left-hand side is
 immediate and to pass to the limit $n\to\infty$ in the right-hand side, it is enough to note that
  $f(u)\Nx u.\Nx u$ is non-negative and belongs to $L^1(\Omega)$. The monotonicity of $\varphi_n$ in $n$ and its point-wise convergence to
   one allow us to apply the Levy monotone convergence theorem and get the desired result. Thus, the
    lemma is proved and the theorem is also proved.
\end{proof}
\begin{remark} We expect that the integration by parts formula \eqref{5.partint} holds without
 the extra assumption \eqref{5.fprime}, however, it is not clear how to verify it. Key difficulty here
  is that $\Bbb D$ is a nonlinear set and it is not easy to construct good smooth
  approximations for functions $u\in\Bbb D$.
\par
The first step in the proof of Theorem \ref{Th5.D} can be also done using the energy type arguments.
 To this
 end one just need to verify the energy identity
 %$$
\begin{equation}\label{5.paren}
 \frac12\frac d{dt}\|\theta(t)\|^2_{L^2}+(a\Nx\theta(t),\Nx\theta(t))+(f'(u(t))\theta(t),\theta(t))=0
\end{equation}
%$$
which can be verified similarly to the proof of Lemma \ref{Lem5.int}. In the case of reaction
diffusion system \eqref{1.main} this is not necessary since Proposition \ref{Prop5.der} gives a simpler
 way to verify the compactness. However, it may be useful in the case of higher order equations
  where the technique of
  Proposition \ref{Prop5.der} may not work.
\end{remark}

\section{Finite dimensionality and exponential attractors}\label{s8}

In this section we discuss the finite-dimensionality of the global attractor for problem \eqref{1.main} and
 the existence of the so-called exponential attractor. We recall that a set $\Cal M\subset H$ is called an
 exponential attractor of the semigroup $\widehat S(t):H\to H$ if the following conditions are satisfied:
 \par
 1. $\Cal M$ is compact in $H$;
 \par
 2. $\Cal M$ is semi-invariant: $\widehat S(t)\Cal M\subset\Cal M$;
 \par
 3. It has a finite fractal dimension in $H$: $\dim_F(\Cal A,H)<\infty$;
\par
4. It attracts the images of bounded in $H$ sets exponentially as time tends to infinity, i.e., for
 every bounded set $B$,
$$
\dist(\hat S(t)B,\Cal M)\le Q(\|B\|_H)e^{-\alpha t}
$$
for some positive $\alpha$ and monotone function $Q$ which are independent of $B$.
\par
It is well-known that the exponential attractor if exists always contains a global attractor, so the
existence of $\Cal M$ automatically implies the finite-dimensionality of a global attractor.
In contrast to global attractors,
exponential attractors usually more robust with respect to perturbations and allow us to control
 the rate of attraction in terms of physical parameters of the system considered, but as a price to
  pay for that, an exponential attractor is not unique, see \cite{EFNT,EMZ05,MZ} for more details.
\par
The existence of an exponential attractor is usually verified using the following abstract result for
discrete semigroups $\hat S(n):=\hat S^n:H\to H$ generated by the map $\hat S:H\to H$.

\begin{proposition}\label{Prop6.a-exp} Let $H,V$ be two Banach spaces such that $V$ is compactly embedded in $H$.
Assume that there exists a bounded closed set $B\subset H$ and a map $\hat S:B\to B$ such that
%$$
\begin{equation}\label{6.sq}
\|\hat S (\xi_1)-\hat S(\xi_2)\|_V\le K\|\xi_1-\xi_2\|_H,\ \ \xi_1,\xi_2\in B.
\end{equation}
%$$
Then the corresponding discrete semigroup $\hat S(n):B\to B$ possesses an exponential attractor
 $\Cal M\subset B$.
\end{proposition}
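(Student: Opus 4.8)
The plan is to prove \ref{Prop6.a-exp} by a Mañé-type covering argument: the smoothing estimate \eqref{6.sq} together with the compactness of the embedding $V\hookrightarrow H$ allows one to build, step by step, finite $\varepsilon$-nets of the iterated images $\hat S^n(B)$ whose cardinalities grow at a \emph{fixed} geometric rate while the mesh decays geometrically; these nets are then glued into a semi-invariant set of finite fractal dimension that attracts $B$ exponentially.

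First I would record the elementary structural fact that, since $\hat S(B)\subset B$, the images are nested, $\hat S^{n+1}(B)=\hat S^{n}(\hat S(B))\subset\hat S^{n}(B)$, hence $\hat S^{k}(B)\subset\hat S^{n}(B)$ for all $k\ge n$; in particular $\hat S^{j}(B)\subset B$ for every $j$. Using precompactness of the closed unit ball of $V$ in $H$, I would fix once and for all the integer $N_\ast$ equal to the number of $H$-balls of radius $(8K)^{-1}$ needed to cover it, so that every $V$-ball of radius $\rho$ is covered by $N_\ast$ $H$-balls of radius $\rho/(8K)$. Taking $e_0\in B$, $R:=\operatorname{diam}_H B$, $E_0:=\{e_0\}$ and $r_n:=R\,2^{-n}$, I construct inductively finite sets $E_n\subset\hat S^{n}(B)$ with $\#E_n\le N_\ast^{\,n}$ and $\hat S^{n}(B)\subset\bigcup_{v\in E_n}B_H(v,r_n)$: given such a cover, for each $v$ with $\hat S^n(B)\cap B_H(v,r_n)\neq\varnothing$ pick $\xi_v$ in this intersection, so $\hat S^n(B)\cap B_H(v,r_n)\subset B_H(\xi_v,2r_n)$; then \eqref{6.sq} gives $\hat S\big(\hat S^n(B)\cap B_H(v,r_n)\big)\subset B_V(\hat S\xi_v,2Kr_n)$, a $V$-ball covered by $N_\ast$ $H$-balls of radius $r_n/4$; discarding those not meeting $\hat S^{n+1}(B)$ and moving the remaining centres onto $\hat S^{n+1}(B)$ (at the price of doubling the radius to $r_n/2=r_{n+1}$) and collecting over all $v$ yields $E_{n+1}$.

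Then I would set
\[
\Cal M:=\overline{\;\bigcup_{m\ge 0}\ \bigcup_{n\ge 0}\ \hat S^{\,m}(E_n)\;}\ \subset\ B ,
\]
and check the four defining properties. Semi-invariance is immediate: $\hat S\big(\hat S^m(E_n)\big)=\hat S^{m+1}(E_n)$, and $\hat S$ is Lipschitz, hence continuous, on $B$ (by \eqref{6.sq} and the continuity of $V\hookrightarrow H$), so $\hat S(\Cal M)\subset\overline{\bigcup_{m,n}\hat S^{m}(E_n)}=\Cal M$. For finite fractal dimension I estimate the covering number of $\Cal M$ at scale $r_p$: every piece $\hat S^{m}(E_n)$ with $m+n\ge p$ lies in $\hat S^{m+n}(B)\subset\hat S^{p}(B)$, hence inside the $N_\ast^{\,p}$ balls $\{B_H(v,r_p):v\in E_p\}$; the remaining pieces are indexed by the at most $p^2$ pairs $(m,n)$ with $m+n<p$ and together contain at most $p^2 N_\ast^{\,p}$ points, each coverable by one ball. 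Thus the covering number at scale $r_p$ is $\le(p^2+1)N_\ast^{\,p}$, and letting $p\to\infty$ gives $\dim_F(\Cal M,H)\le\log N_\ast/\log 2<\infty$. Finite fractal dimension forces total boundedness, and $\Cal M$ is closed in the Banach (hence complete) space $H$, so $\Cal M$ is compact. Finally, since $E_n\subset\Cal M$ and $\hat S^n(B)\subset\bigcup_{v\in E_n}B_H(v,r_n)$, for every $B'\subset B$ one has $\dist_H\!\big(\hat S^n(B'),\Cal M\big)\le\dist_H\!\big(\hat S^n(B),E_n\big)\le R\,2^{-n}$, which is exponential attraction with $\alpha=\log 2$.

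The one genuinely delicate point is passing from the easy "pre-attractor" $\bigcup_n E_n$ to a truly \emph{semi-invariant} set without inflating the dimension: the naive invariant hull $\bigcup_{m}\hat S^{m}\!\big(\bigcup_n E_n\big)$ is a countable union of finite (hence zero-dimensional) sets, but its closure could a priori be infinite-dimensional. The diagonal double union above circumvents this precisely because the nesting $\hat S^{k}(B)\subset\hat S^{n}(B)$ for $k\ge n$ lets the \emph{single} net $E_p$ simultaneously control all the deep iterates $\hat S^{m}(E_n)$ with $m+n\ge p$. Everything else — the one-step covering with fixed branching number, the geometric decay of the mesh, and compactness via total boundedness — is routine once the compact embedding is used to secure $N_\ast<\infty$.
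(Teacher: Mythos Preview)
Your argument is correct and is precisely the standard covering construction that the paper invokes by citation rather than proving in-line: after stating the proposition the authors write ``For the proof of this proposition, see \cite{EMZ00,EMZ05}'' and give no further argument. The ingredients you use --- fixing a branching number $N_\ast$ from the compact embedding $V\hookrightarrow H$, building geometrically decaying $\varepsilon$-nets $E_n\subset\hat S^n(B)$ recursively via \eqref{6.sq}, and taking the closure of the $\hat S$-orbit of $\bigcup_n E_n$ while controlling the dimension through the nesting $\hat S^{m+n}(B)\subset\hat S^p(B)$ --- are exactly the Efendiev--Miranville--Zelik scheme from those references.
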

For the proof of this proposition, see \cite{EMZ00,EMZ05}.
\par
In applications usually $B$ is an absorbing ball of the considered continuous semigroup $\hat S(t):H\to H$,
 $\widehat S:=\widehat S(T)$ for some properly chosen $T$ and \eqref{6.sq} is verified using the proper
 parabolic smoothing property for the equation on differences of two solutions. If the existence
 of a discrete exponential attractor $\Cal M_d$ is established, the exponential attractor for the continuous
  semigroup can be constructed by the standard formula:
  $$
\Cal M:=\cup_{t\in[T,2T]}\hat S(t)\Cal M_d
$$
and in order to get its finite-dimensionality, we need to assume in addition that the semigroup
 is also H\"older
continuous in time:
%$$
\begin{equation}\label{6.hol}
\|\hat S(t_1)\xi_1-\hat S(t_2)\xi_2\|_{H}\le L\(\|\xi_1-\xi_2\|_H+|t_1-t_2|^\alpha\),\ \
\end{equation}
%$$
for some $\alpha\in(0,1]$ and all $t_i\in[T,2T]$ and
$\xi_i\in B$,
see \cite{EMZ05} for the details.
\par
The main result of this section is the following theorem.

\begin{theorem} Let the nonlinearity $f$ satisfy assumptions \eqref{1.f}, \eqref{1.fp} for some
 $p\in\R_+$, \eqref{5.fprime} and the following convexity property: there exist a convex function
  $\Psi:\R^k\to\R$ such that
  %$$
  \begin{equation}\label{5.con-vex}
C_2(\Psi(u)-1-|u|^2)\le |f(u)|^2\le C_1(\Psi(u)+|u|^2+1),\ \ u\in\R^k,
  \end{equation}
  %$$
  for some positive constants $C_1$ and $C_2$. Let also $g\in L^2(\Omega)$ and $a$ satisfy \eqref{1.a}.
  Then problem \eqref{1.main} possesses an exponential attractor $\Cal M$ in the space $H:=L^2(\Omega)$
   which is a compact set in $\Bbb D$.
\end{theorem}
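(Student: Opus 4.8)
The plan is to invoke the abstract exponential--attractor criterion of Proposition~\ref{Prop6.a-exp} in the base space $H=L^2(\Omega)$, with $V$ a Sobolev space of positive order compactly embedded in $H$ (a subspace of $H^1_0(\Omega)$, of higher order if the estimate below permits), with $\widehat S:=\widehat S(T)$ for a conveniently large $T\ge 1$, acting on a closed bounded absorbing set $\mathcal B\subset L^2(\Omega)$. The first task is to produce such a $\mathcal B$. Combining the dissipative estimates \eqref{1.l2dis}--\eqref{1.h1dis} with Theorem~\ref{Th5.D} (compactness of $\widehat S(1)B_R$ in $\Bbb D$, compactness of $\mathcal A$ in $\Bbb D$, and attraction in the $\Bbb D$--topology), one sees that for every bounded $B\subset L^2$ there is a time past which $\widehat S(t)B$ stays uniformly bounded in $\Bbb D$ in the sense of the embedding $j$; hence one may take $\mathcal B:=\overline{\bigcup_{t\ge T_*}\widehat S(t)B_0}^{\,L^2}$, with $B_0$ the $H^1$--absorbing ball and $T_*$ large. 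This $\mathcal B$ is $L^2$--compact, forward invariant, absorbing, and its elements $u$ satisfy a uniform bound $\|u\|_{H^2}+\|f(u)\|_{L^2}\le R_0$. Here the convexity assumption \eqref{5.con-vex} already enters: as announced in the remark after Corollary~\ref{Cor1.d}, it is precisely what upgrades the a priori only exponentially growing $\Bbb D$--estimate of that corollary to a genuinely dissipative one, so that a $\mathcal B$ of fixed $\Bbb D$--size exists.

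The heart of the argument is the smoothing estimate \eqref{6.sq}, i.e. a bound $\|\widehat S(T)\xi_1-\widehat S(T)\xi_2\|_{V}\le K\|\xi_1-\xi_2\|_{L^2}$ for $\xi_1,\xi_2\in\mathcal B$. Writing $u_i(t)=\widehat S(t)\xi_i$ and $v=u_1-u_2$, the difference solves $\Dt v=a\Dx v-A(x,t)v$, $v|_{\partial\Omega}=0$, with $A(x,t)=\int_0^1 f'\bigl(u_2+sv\bigr)(x,t)\,ds$. Three ingredients feed the estimate: monotonicity $f'\ge-K$ gives $A\ge-K$, hence the $L^2$--Lipschitz control of Lemma~\ref{Lem1.lip}, namely $\sup_{[0,T]}\|v(t)\|_{L^2}^2+\int_0^T\|v(t)\|_{H^1}^2\,dt\le C\|v(0)\|_{L^2}^2$; assumption \eqref{5.fprime} bounds $|f'|$ by $C(1+|f|+|u|)$; and — the crucial point — the convexity \eqref{5.con-vex} bounds $f$ along the segment $u_2+sv=su_1+(1-s)u_2$, since $|f(su_1+(1-s)u_2)|^2\le C_1\bigl(s\Psi(u_1)+(1-s)\Psi(u_2)+|su_1+(1-s)u_2|^2+1\bigr)\le C\bigl(|f(u_1)|^2+|f(u_2)|^2+|u_1|^2+|u_2|^2+1\bigr)$, so that $|A(\cdot,t)|$ is controlled in $L^2(\Omega)$ by the uniform $\Bbb D$--bound $R_0$ on $\mathcal B$ for $t$ bounded away from $0$. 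With this, one multiplies the equation for $v$ by $-\varphi(t)\Dx v$ with a time--cut-off $\varphi$ vanishing near $t=0$, absorbs the nonlinear term using the control of $A$ and the positivity of $a$, and so promotes the $L^2$--Lipschitz bound to a pointwise higher--regularity bound for $v(T)$; reading $\Dt v=a\Dx v-Av$ as an elliptic problem for $v(T)$ then closes \eqref{6.sq}.

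Once \eqref{6.sq} holds, Proposition~\ref{Prop6.a-exp} yields a discrete exponential attractor $\mathcal M_d\subset\mathcal B$ for the iterates of $\widehat S(T)$. To pass to the continuous semigroup I would verify the H\"older--in--time property \eqref{6.hol} on $\mathcal B$, which follows from the uniform control of $\Dt u(t)$ for $t\in[T,2T]$ provided by Lemma~\ref{Lem1.theta} and Proposition~\ref{Prop5.der}; then $\mathcal M:=\bigcup_{t\in[T,2T]}\widehat S(t)\mathcal M_d$ is an exponential attractor for $\widehat S(t)$ in $H=L^2(\Omega)$, whence $\dim_F(\mathcal A,H)<\infty$. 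It remains to see that $\mathcal M$ is compact in $\Bbb D$. Since $\mathcal M_d\subset\mathcal B$ is bounded in $L^2$ and $T\ge1$, any point of $\mathcal M$ equals $\widehat S(t)\xi=\widehat S(1)\bigl(\widehat S(t-1)\xi\bigr)$ with $\widehat S(t-1)\xi$ ranging over a fixed $L^2$--bounded set; by Theorem~\ref{Th5.D} the image under $\widehat S(1)$ of such a set is precompact in $\Bbb D$, so $\mathcal M$ is precompact in $\Bbb D$, and since $\Bbb D$--convergence implies $L^2$--convergence while $\mathcal M$ is $L^2$--closed, every $\Bbb D$--limit of a sequence from $\mathcal M$ lies in $\mathcal M$; hence $\mathcal M$ is $\Bbb D$--compact.

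I expect the main obstacle to be exactly the smoothing estimate \eqref{6.sq}: because $\Bbb D$ is a nonlinear set and $f(u)$ is a priori only in $L^2$, controlling the potential $A$ and the resulting terms in the difference equation is genuinely delicate, and it is here that the convexity assumption \eqref{5.con-vex} — which is what makes $|f|$ behave well under convex combinations — cannot be dispensed with. A secondary subtlety, already visible above, is that, $\Bbb D$ being nonlinear, the discrete exponential attractor has to be constructed in the linear space $V$ sitting over $L^2$, the $\Bbb D$--compactness of $\mathcal M$ being recovered only a posteriori through Theorem~\ref{Th5.D}.
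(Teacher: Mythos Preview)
Your overall framework is right: apply Proposition~\ref{Prop6.a-exp} on a closed absorbing set bounded in $\Bbb D$, verify the H\"older-in-time condition \eqref{6.hol} via the $L^2$-bound on $\Dt u$, and recover $\Bbb D$-compactness of $\Cal M$ a posteriori through Theorem~\ref{Th5.D}. Two points, however, need correction.

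First, a minor misattribution: the dissipative $\Bbb D$-bound on the absorbing set does \emph{not} require the convexity assumption \eqref{5.con-vex}. It follows directly from Corollary~\ref{Cor1.smm}, which uses only the polynomial growth \eqref{1.fp}. Convexity enters only at the smoothing step for differences.

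Second, and this is the real gap, your proposed route to \eqref{6.sq} --- multiplying the difference equation by $-\varphi(t)\Dx v$ --- does not close. After this multiplication the bad term is $(A v,\Dx v)$, and with only $A\in L^\infty_t L^2_x$ you cannot absorb it: by H\"older you would need $v\in L^\infty(\Omega)$ (which is unavailable in the supercritical regime $d>4$), while integrating by parts produces $\Nx A$, i.e.\ second derivatives of $f$ along the segment, which are completely out of control. So the $H^1$-energy argument for $v$ fails precisely where it matters.

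The paper proceeds differently. Having established $\|l(t)\|_{L^2}\le C$ from \eqref{5.fprime} and \eqref{5.con-vex} as you do, it treats $l(t)\theta$ as a \emph{forcing} term: from the basic estimate $\theta\in L^\infty(L^2)\cap L^2(H^1)$ and H\"older one gets $l\theta\in L^s((0,T)\times\Omega)$ for some $s\in(1,2)$, and then the parabolic $L^s$-interior regularity yields $\|\theta(T)\|_{W^{2(1-1/s),s}}\le C_T\|\theta(0)\|_{L^2}$. This alone is still weaker than $L^2$, so it is combined with a second ingredient, the $L^{r+2}$-smoothing $\|\theta(T)\|_{L^{r+2}}\le C_T\|\theta(0)\|_{L^2}$ obtained (for small $r>0$ depending only on $a$) by multiplying the difference equation by $\theta|\theta|^r$ exactly as in Proposition~\ref{Prop5.der}. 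The interpolation embedding $W^{1-\kappa,1}\cap L^{r+2}\subset H^{\eb}$ (cf.\ \eqref{5.int-in}) then gives $\|\theta(T)\|_{H^\eb}\le C_T\|\theta(0)\|_{L^2}$, and one takes $V=H^\eb(\Omega)$ in Proposition~\ref{Prop6.a-exp}. In short: replace your $-\Dx v$ multiplication by parabolic $L^s$-regularity plus the $\theta|\theta|^r$ trick and interpolation.
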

\begin{proof} According to Corollary \ref{Cor1.smm} a ball $B=B_R$ in $\Bbb D$ of a
sufficiently large radius $R$ is an absorbing set for the solution semigroup $\hat S(t):H\to H$
associated with equation \eqref{1.main}. Let us fix $T>0$ big enough that $\hat S(T)B\subset B$ and set
$\hat S:=\hat S(T)$. Then, according to estimate \eqref{1.lip}, the semigroup $\hat S(t)$ is
 Lipschitz continuous with respect to the initial data for every fixed $t$. Moreover, since $\Dt u(t)$ is
  bounded in the $L^2$-norm if $u(0)\in\Bbb D$, this semigroup is also Lipschitz continuous in time, so
  condition \eqref{6.hol}
   is satisfied with $\alpha=1$. Therefore, in order to verify the existence of an exponential attractor,
   it is enough to check the smoothing property \eqref{6.sq} for the properly chosen space $V$.
    To this end, we need to
   establish a number of smoothing estimates for the difference of solutions of equation \eqref{1.main}.
   \par
   Let $u_1(t)$ and $u_2(t)$ be two solutions of \eqref{1.main} starting from the absorbing ball $B$.
   Then their difference $\theta(t):=u_1(t)-u_2(t)$ solves the equation
   %$$
   \begin{equation}\label{6.dif}
   \Dt\theta=a\Dx\theta-l(t)\theta,\ \ l(t):=\int_0^1f'(su_1(t)+(1-s)u_2(t))\,ds.
   \end{equation}
   %$$
   We recall that, due to the assumption $f'(u)\ge-K$, multiplication of this equation on $\theta$
   gives the basic Lipschitz
    continuity estimate:
    %$$
\begin{equation}\label{6.l2}
     \|\theta(T)\|^2_{L^2}+\int_0^T\|\Nx \theta(t)\|^2_{L^2}\,dt\le Ce^{KT}\|\theta(0)\|_{L^2}^2,
\end{equation}
%$$
see Lemma \ref{Lem1.lip}. Moreover, multiplying \eqref{6.dif} by $\theta|\theta|^r$ and arguing
exactly as in the proof of Proposition \ref{Prop5.der},
we get the estimate
%$$
\begin{equation}\label{6.lr-sm}
\|\theta(T)\|_{L^{r+2}}\le Ce^{KT}T^{-1}\|\theta(0)\|_{L^2}
\end{equation}
%$$
for some sufficiently small positive $r$ depending only on the matrix $a$.
\par
In order to get smoothing estimate for $\theta$, we argue as in Step 1 of the proof
 of Theorem \ref{Th5.D}. Namely, from \eqref{5.fprime} and \eqref{5.con-vex}, we conclude that
 %$$
\begin{multline}
 |f'(su_1+(1-s)u_2)|^2\le\\\le C(|f(s u_1+(1-s)u_2)|^2+1+|u_1|^2+|u_2|^2)\le\\\le
  C'(|f(u_1)|^2+|f(u_2)|^2+|u_1|^2+|u_2|^2+1),\ s\in(0,1)
\end{multline}
%$$
and, therefore, since $f(u(t))$ is uniformly bounded in $L^2$-norm for our solutions $u_1$ and
$u_2$, we have
%$$
\begin{equation}
\|l(t)\|_{L^2}\le C,\ \ t\in[0,T].
\end{equation}
%$$
This estimate, in turn, implies (together with \eqref{6.l2}, Sobolev embedding theorem and
H\"older inequality)
 that
 $$
 \|l(t)\theta\|_{L^s(0,T;L^s)}\le C_T.
 $$
 for some $1<s<2$. Applying now the $L^s$ interior regularity estimate to equation
 \eqref{6.dif} and arguing as in the proof
  of Theorem \ref{Th5.D}, we get
  $$
  \|\theta(T)\|_{W^{2(1-\frac1s),s}}\le C_T\|\theta(0)\|_{L^2}
  $$
which together with the embedding \eqref{5.int-in} gives
$$
\|\theta(T)\|_{H^\eb}\le C_T\|\theta(0)\|_{L^2}
$$
for some positive exponent $\eb$. Setting finally $V=H^\eb(\Omega)$ we get the desired smoothing property
\eqref{6.sq} and finish the proof of the theorem.
\end{proof}

\begin{remark} The finite-dimensionality of the global attractor $\Cal A$ has been established under
 similar
assumptions on $f$ in \cite{Z1} using the so-called method of $l$-trajectories, see also \cite{MN96,MP02}.
 In
 the present work we suggest the simplified version of the proof which is based on multiplication of equation
  \eqref{6.dif} on the quantities like $\theta|\theta|^r$. Although the proof becomes more transparent,
  it is
   slightly less general than the one suggested in \cite{Z1} since this multiplication is suitable for
    reaction-diffusion systems and may not work for more general
     (e.g., higher order equations). In such cases one should return
     back to the method of $l$-trajectories.
\end{remark}

\section{Generalizations and concluding remarks}\label{s9}
In this concluding section we briefly consider other types of equations for which the technique
developed above works (with some minor changes which we will discuss) and state some interesting
 open problems. We start with the case of fractional Laplacians and the corresponding
 reaction-diffusion equations which are becoming more and more popular nowadays,
 see \cite{ASS16,AMR,GW16,LSS} and references therein for more details.

\subsection{Fractional reaction-diffusion systems} Let us define $A:=(-\Dx)^{\alpha}$, $0<\alpha<1$,
in the domain $\Omega$ endowed with Dirichlet boundary conditions and consider the following fractional
 reaction-diffusion system:
 %$$
 \begin{equation}\label{9.frds}
 \Dt u+a(-\Dx)^\alpha u +f(u)=g,\ \ u\big|_{\partial\Omega}=0,
 \end{equation}
 %$$
where the function $f$ and the matrix $a$ satisfy assumptions \eqref{1.f} and \eqref{1.a} respectively.
In this case, the definition of the phase space $\Bbb D$ should be reduced as follows:
%$$
\begin{equation}\label{9.Dal}
\Bbb D_\alpha:=\{u\in H^{2\alpha}_{\Dx},\ \ f(u)\in L^2(\Omega)\},
\end{equation}
%$$
where $H^{2\alpha}_{\Dx}:=D((-\Dx)^\alpha)$.
\par
All of the estimates and results stated above for the case $\alpha=1$ can be extended in a
 straightforward way to a general case $0<\alpha<1$. The only non-trivial place is the
  estimates of the terms like $((-\Dx)^\alpha u, f(u))$ or $((-\Dx)^\alpha u,u|u|^r)$.
  In the case when $\Omega=\R^d$
   (or in the case of periodic BC), we have a nice explicit formula for such inner products
   which trivializes the required estimates (see e.g., \cite{tri}), namely,
   %$$
   \begin{equation}\label{9.nice}
(Au,v)=C_\alpha\int_{\R^d}\int_{\R^d}\frac{(u(x)-u(y))(v(x)-v(y))}{|x-y|^{d+2\alpha}}\,dx\,dy.
   \end{equation}
   %$$
In particular, it gives the positivity of $(Au,f(u))$ if $f'(u)\ge0$. Fortunately, there is an extension
of this formula to the case of bounded domains (see \cite{Caf16}), namely,
%$$
\begin{multline}
(Au,v)=\int_\Omega\int_\Omega (u(x)-u(y))(v(x)-v(y))K_{\Omega,\alpha}(x,y)\,dx,\,dy+\\+
\int_\Omega u(x)v(x)B_{\Omega,\alpha}(x)\,dx
\end{multline}
%$$
for some non-negative functions $K_{\Omega,\alpha}$ and $B_{\Omega,\alpha}$. This
formula allows us to get the
 same type of estimates as for the local case $\alpha=1$.  For the convenience of the reader,
 we state below
  the analogues of two main results for the fractional case.
\begin{theorem}\label{Th9.crit} Let the matrix $a$ and the nonlinearity $f$ satisfy
\eqref{1.a} and \eqref{1.f} respectively  and let,
 in addition,  the nonlinearity
  $f$ satisfy \eqref{1.fp} with the exponent $p$ restricted by the assumption
  %$$
  \begin{equation}
p<p_{crit}+\eb,\ \ p_{crit}:=1+\frac{4\alpha}{d-4\alpha},\ \eb=\eb(a)>0
  \end{equation}
  %$$
  if $d\ge4\alpha$
  and the external forces $g\in L^q(\Omega)$ for some $q>\frac d{2\alpha}$. Then any
   weak solution $u(t)$ of
   problem \eqref{9.frds} starting from $u(0)\in H$ possesses the following smoothing property:
   %$$
   \begin{equation}
   \|u(t)\|_{L^\infty}\le Q_t(\|u(0)\|_{L^2}+\|g\|_{L^q}),\ \ t\in(0,1]
   \end{equation}
   %$$
   for some monotone function $Q_t$ depending on $t$, but  independent of $u$ and $g$.
\end{theorem}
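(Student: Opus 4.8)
The plan is to repeat, essentially verbatim, the arguments of Sections~\ref{s6} and~\ref{s7} with the local Laplacian $-\Dx$ replaced throughout by $A=(-\Dx)^\alpha$, the one genuinely new ingredient being the nonlocal bilinear representation of $(Au,v)$ recalled above; this is precisely what makes every ``integration by parts'' step of the local theory go through with the correct sign. First I would reproduce the whole hierarchy of a priori estimates of Section~\ref{s1.1} in the fractional phase space $\Bbb D_\alpha$: testing \eqref{9.frds} successively with $u$, with $Au$, with $\Dt u$ and with $u|u|^r$ yields, respectively, the dissipative $L^2$- and $H^\alpha_{\Dx}$-type estimates, the control of $\|\Dt u\|_{L^2}$ and of $\|u\|_{\Bbb D_\alpha}$, and the $L^{r+2}$-estimate of $\Dt u$ of Proposition~\ref{Prop5.der}. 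In each case the quadratic form $(Au,v)$ is handled by writing it through the kernels $K_{\Omega,\alpha},B_{\Omega,\alpha}\ge0$: for $v=f(u)$ one uses monotonicity in the form $(f(u(x))-f(u(y)))\cdot(u(x)-u(y))\ge-K|u(x)-u(y)|^2$ together with $f(u).u\ge-C$, and for $v=u|u|^r$ one uses the elementary pointwise bound $(u(x)-u(y))\cdot\big(u(x)|u(x)|^r-u(y)|u(y)|^r\big)\ge c_r\big(|u(x)|^{(r+2)/2}-|u(y)|^{(r+2)/2}\big)^2$.

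Next, the weak-to-strong smoothing of Theorem~\ref{Th1.strange} transcribes directly: multiply the equation for $\theta=\Dt u$ by $t^N\theta$, split $\|\theta\|^2_{L^2}=(|\theta|^s,|\theta|^{2-s})$, substitute $\theta=-aAu-f(u)+g$ into one factor, and estimate the three resulting terms by the H\"older and Young inequalities, the only change being that the Sobolev embedding used for the $f(u)$-term is now $H^\alpha_{\Dx}\subset L^r$ with $\tfrac1r=\tfrac12-\tfrac\alpha d$; this gives the instantaneous $H\to\Bbb D_\alpha$ smoothing. Finally, Proposition~\ref{Prop5.der} and its corollary give $\Dt u(t)\in L^{r+2}$ for $t>0$ with $r=r(a)>0$, and rewriting \eqref{9.frds} as the stationary fractional problem $aAu(t)+f(u(t))=g-\Dt u(t)$ and invoking the fractional analogue of the elliptic regularity of Appendix~\ref{A} one obtains $u(t)\in L^{q_0}$ pointwise in $t$, for an explicit $q_0=q_0(a,\alpha)$ that initializes the bootstrap.

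With these preliminaries in hand the $L^\infty$-bound follows by the bootstrap of Theorem~\ref{Th5.crit}. One iterates the interior smoothing estimate for the linear fractional heat equation $\Dt u+aAu=h(t)$,
\[
\|u\|_{L^\infty(T,T+1;W^{2\alpha-\kappa,s})}\le C_{T,s,\kappa}\big(\|u\|_{L^2(0,T+1;L^2)}+\|h\|_{L^\infty(0,T+1;L^s)}\big),
\]
valid for small $\kappa>0$ and $1<s<\infty$ (it follows from the analyticity of the semigroup generated by $-aA$ on $L^s(\Omega)$, or from maximal $L^q$-regularity), together with the embedding $W^{2\alpha-\kappa,s}\subset L^{sd/(d-s(2\alpha-\kappa))}$. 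With $h=g-f(u)$, the growth assumption $|f(u)|\le C(1+|u|^p)$ and $s_k=\min\{q,q_k/p\}$ this produces the scalar recursion
\[
q_{k+1}=\frac{q_k d}{pd-q_k(2\alpha-\kappa)},\qquad \kappa\ll1,
\]
started from $q_0=q_0(a,\alpha)$ as above. This sequence is increasing, hence diverges to $+\infty$, exactly when $p-\tfrac{q_0}{d}(2\alpha-\kappa)<1$; substituting the growth restriction $p<1+\tfrac{4\alpha}{d-4\alpha}+\eb$ and the explicit value of $q_0$ reduces this to an inequality that holds once $\kappa$ and $\eb=\eb(a)$ are small enough. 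After finitely many steps $q_k>\tfrac d{2\alpha}$, so $W^{2\alpha-\kappa,q_k}\subset L^\infty$ for small $\kappa$; keeping track of the powers of $t$ along the iteration then gives the asserted $L^\infty$ smoothing estimate with the stated dependence on $\|u(0)\|_{L^2}+\|g\|_{L^q}$.

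The main obstacle is concentrated in the first paragraph: one must verify that on a bounded domain with Dirichlet conditions the nonlocal form $(Au,v)$ really enjoys the same sign and coercivity properties that the local integration-by-parts identities supply when $\alpha=1$, namely that $(Au,f(u))\ge-C$ when $f'(u)\ge0$ and that $(Au,u|u|^r)$ is bounded below by a multiple of $\big\||u|^{(r+2)/2}\big\|_{H^\alpha_{\Dx}}^2$ up to lower-order terms. This is exactly what the representation with $K_{\Omega,\alpha},B_{\Omega,\alpha}\ge0$ of \cite{Caf16} is designed for; one must also record the corresponding fractional elliptic regularity result replacing Appendix~\ref{A}. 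Once these nonlocal facts are available, every remaining step is a routine repetition of the local arguments, with the exponent $2$ systematically replaced by $2\alpha$ in the Sobolev indices and the critical value $1+\tfrac4{d-4}$ by $1+\tfrac{4\alpha}{d-4\alpha}$.
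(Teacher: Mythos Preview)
Your proposal is correct and matches the paper's own treatment exactly: the paper does not give a separate proof of Theorem~\ref{Th9.crit} but simply states it after explaining that all estimates of the local case carry over with the single nontrivial ingredient being the Caffarelli--Stinga bilinear representation of $((-\Dx)^\alpha u,v)$ with nonnegative kernels $K_{\Omega,\alpha}$, $B_{\Omega,\alpha}$, which replaces the local integration-by-parts identities. Your sketch is in fact more detailed than the paper's, correctly flagging the fractional analogue of the elliptic regularity in Appendix~\ref{A} and the $(Au,u|u|^r)$ coercivity as the points requiring care; these are precisely the ``terms like $((-\Dx)^\alpha u,f(u))$ or $((-\Dx)^\alpha u,u|u|^r)$'' the paper singles out as the only non-trivial places.
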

\begin{remark} Note that this result is not very helpful if $0<\alpha<\frac12$ since the direct
$H^1$-estimate
which is obtained by multiplication of the equation by $-\Dx u$ gives the control of the $H^1$-norm
(of course, assuming in addition that $g\in H^{1-\alpha}$) which
 is better than $H^{2\alpha}$-control finally obtained from $u\in\Bbb D_\alpha$. However, it is useful
  for $\alpha\ge\frac12$. In particular, in the case $0< \alpha<\frac34$, we may have the
   supercritical growth rate in the case of physical dimension $d=3$ as well. So, main results
    become applicable for $d=3$ as well. Note also that many of the results of our paper
     may be extended also
     to the case $\alpha>1$ (e.g., to the Swift-Hohenberg type equations where $\alpha=2$),
      but in this case we will be not able to multiply the equation by $Au$ since the term $(Au,f(u))$
       will be out of control, so we may multiply it only on $\Dx u$ and this gives the control
        of the $H^{\frac{1+\alpha}2}$-norm of $u(t)$ (not $H^{2\alpha}$ as before). Moreover, in this
         case $\eb(a)=0$ since multiplication on $u|u|^r$ is no more available.
\end{remark}
We now state the key result about exponential attractors for the supercritical case.

\begin{theorem}\label{Th9.attr} Let the nonlinearity $f$ satisfy assumptions \eqref{1.f},
 \eqref{1.fp} for some
 $p\in\R_+$, \eqref{5.fprime} and the following convexity property: there exist a convex function
  $\Psi:\R^k\to\R$ such that
  %$$
  \begin{equation}
C_2(\Psi(u)-1-|u|^2)\le |f(u)|^2\le C_1(\Psi(u)+|u|^2+1),\ \ u\in\R^k,
  \end{equation}
  %$$
  for some positive constants $C_1$ and $C_2$. Let also $0<\alpha<1$, $g\in L^2(\Omega)$ and $a$
   satisfy \eqref{1.a}.
  Then problem \eqref{9.frds} possesses an exponential attractor $\Cal M$ in the space $H:=L^2(\Omega)$
   which is a compact set in $\Bbb D_\alpha$.
\end{theorem}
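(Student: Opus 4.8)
The plan is to transcribe the proof of the exponential attractor theorem of Section~\ref{s8} almost word for word, replacing $-\Dx$ by $A=(-\Dx)^\alpha$, the space $\Bbb D$ by $\Bbb D_\alpha$, and every integration-by-parts identity by the kernel representation of $(Au,v)$ borrowed from \cite{Caf16}. First I would record the structural facts: by the fractional analogue of Corollary~\ref{Cor1.smm}, a ball $B=B_R$ in $\Bbb D_\alpha$ of sufficiently large radius is an absorbing set for the solution semigroup $\widehat S(t)\colon H\to H$ of \eqref{9.frds}; I fix $T>0$ with $\widehat S(T)B\subset B$ and set $\widehat S:=\widehat S(T)$. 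The fractional analogue of Lemma~\ref{Lem1.lip} (test the difference equation with $\theta$, use $f'(u)\ge -K$ and the coercivity $(A\theta,\theta)\ge c\|\theta\|_{H^\alpha}^2$) gives global Lipschitz continuity of $\widehat S(t)$ in the $L^2$-metric, and the uniform $L^2$-bound for $\Dt u$ on $B$ gives Lipschitz continuity in time, so \eqref{6.hol} holds with H\"older exponent one. By Proposition~\ref{Prop6.a-exp} it then remains only to exhibit a Banach space $V$ compactly embedded in $H=L^2(\Omega)$ for which the smoothing estimate \eqref{6.sq} is valid.

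To verify \eqref{6.sq} I would run the computation of Section~\ref{s8} on the difference $\theta=u_1-u_2$ of two solutions starting in $B$, which solves the fractional analogue of \eqref{6.dif}, namely $\Dt\theta+aA\theta+l(t)\theta=0$ with $l(t)=\int_0^1 f'(su_1(t)+(1-s)u_2(t))\,ds$. Testing with $\theta$ reproduces \eqref{6.l2} with $\Nx$ replaced by $(-\Dx)^{\alpha/2}$, so $\theta\in L^\infty(0,T;L^2)\cap L^2(0,T;H^\alpha)$. Testing with $\theta|\theta|^r$ and using the kernel representation of $(A\theta,\theta|\theta|^r)$ — whose integrand is nonnegative because $\xi\mapsto\xi|\xi|^r$ is the gradient of the convex function $\tfrac1{r+2}|\xi|^{r+2}$, hence monotone on $\R^k$ — together with $f'(u)\ge-K$, yields, exactly as in Proposition~\ref{Prop5.der}, the $L^{r+2}$-smoothing estimate $\|\theta(T)\|_{L^{r+2}}\le C_T\|\theta(0)\|_{L^2}$ for a sufficiently small $r=r(a)>0$, i.e. the analogue of \eqref{6.lr-sm}. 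Next, from \eqref{5.fprime}, \eqref{5.con-vex} and the uniform $L^2$-bound on $f(u_i(t))$ on $B$ one gets $\|l(t)\|_{L^2}\le C$, whence, by H\"older's inequality and the Sobolev embedding for $H^\alpha$, $\|l\theta\|_{L^2(0,T;L^s)}\le C_T\|\theta(0)\|_{L^2}$ for some $s>1$. Applying $L^s$ maximal parabolic regularity for the fractional heat equation $\Dt\theta+aA\theta=-l(t)\theta$ then gives $\|\theta(T)\|_{W^{\sigma_1,s}}\le C_T\|\theta(0)\|_{L^2}$ for some $\sigma_1>0$.

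To finish, I would argue precisely as in Step~1 of the proof of Theorem~\ref{Th5.D}: interpolating the bound in $W^{\sigma_1,s}$ with the $L^{r+2}$-bound (for small $\lambda>0$ the intersection $W^{\sigma_1,s}\cap L^{r+2}$ embeds into some $W^{\sigma,q}$ with $\sigma>0$ and $q\ge 2$, hence compactly into $L^2$; alternatively one uses the interpolation embedding \eqref{5.int-in} verbatim) one obtains the linear estimate $\|\theta(T)\|_{V}\le C_T\|\theta(0)\|_{L^2}$ with $V:=W^{\sigma,q}(\Omega)\subset\subset H$. This is \eqref{6.sq}, so Proposition~\ref{Prop6.a-exp} provides a discrete exponential attractor $\Cal M_d\subset B$ for $\widehat S^n$; the continuous exponential attractor is then $\Cal M:=\cup_{t\in[T,2T]}\widehat S(t)\Cal M_d$, and, since $\Cal M\subset\widehat S(1)B_R$, its compactness in $\Bbb D_\alpha$ follows from the fractional analogue of Theorem~\ref{Th5.D}.

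The main obstacle is the $L^{r+2}$-smoothing step when $a$ is genuinely non-symmetric: in the local case the cross term $(aA\theta,\theta|\theta|^r)$ is controlled by an elementary pointwise computation after integration by parts, and here one must check that the kernel representation of \cite{Caf16} still delivers the required coercivity with the customary small loss ($\alpha-Cr>0$), which is exactly where the restriction $r=r(a)$ enters. All the remaining steps are routine transcriptions of the $\alpha=1$ arguments and, as noted in the text, go through without essential change for $0<\alpha<1$.
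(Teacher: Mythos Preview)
Your proposal is correct and follows exactly the approach the paper intends: the paper does not give a separate proof of Theorem~\ref{Th9.attr} but states it as the fractional analogue of the main result of Section~\ref{s8}, remarking that all estimates extend ``in a straightforward way'' once the terms of the form $((-\Dx)^\alpha u,f(u))$ and $((-\Dx)^\alpha u,u|u|^r)$ are handled via the kernel representation of \cite{Caf16}. Your write-up faithfully transcribes the Section~\ref{s8} argument with $-\Dx$ replaced by $A=(-\Dx)^\alpha$ and $\Bbb D$ by $\Bbb D_\alpha$, and you correctly isolate the one genuinely new ingredient (coercivity of $(aA\theta,\theta|\theta|^r)$ for non-symmetric $a$ via the nonnegative kernel, with the same small-$r$ restriction as in Proposition~\ref{Prop5.der}).
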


\subsection{Cahn-Hilliard type systems} Let us consider the following fractional Cahn-Hilliard
 system in $\Omega\subset\R^d$:
%$$
\begin{equation}\label{9.CH}
\Dt u+(-\Dx)^\beta(a(-\Dx u)^\alpha u+f(u)-g)=0
\end{equation}
%$$
endowed by the Dirichlet boundary conditions. We assume here that $0<\beta\le1$, $0<\alpha\le1$. Note that $\alpha=\beta=1$ corresponds to the
 classical Cahn-Hilliard system and $\beta=0$, $\alpha=1$ to the reaction-diffusion system considered above.
 See \cite{ASS16,MZ,tem} and references therein for more details concerning classical and
  fractional CH-equations.
  It is natural to take $\Bbb D_\alpha$ as the phase space for this problem and rewrite it
   in the following form:
  %$$
  \begin{equation}
\Dt (-\Dx)^{-\beta} u+a(-\Dx)^\alpha u+f(u)=g.
  \end{equation}
  %$$
Then we may utilize the monotonicity of the function $f$ and apply the developed above theory
 to this equation (see also \cite{MZ} for the case $\alpha=\beta=1$). In this case, weak solutions are
  naturally defined in the space $H:=H^{-\beta}(\Omega)$ and strong solutions live in $\Bbb D_\alpha$.
  \par
  The key result on the existence of exponential attractors now reads.
  \begin{theorem} Let the nonlinearity $f$ satisfy assumptions \eqref{1.f}, \eqref{1.fp} for some
 $p\in\R_+$, \eqref{5.fprime} and the following convexity property: there exist a convex function
  $\Psi:\R^k\to\R$ such that
  %$$
  \begin{equation}
C_2(\Psi(u)-1-|u|^2)\le |f(u)|^2\le C_1(\Psi(u)+|u|^2+1),\ \ u\in\R^k,
  \end{equation}
  %$$
  for some positive constants $C_1$ and $C_2$. Let also $g\in L^2(\Omega)$ and $a$ satisfy \eqref{1.a}.
  Then problem \eqref{9.CH} possesses an exponential attractor $\Cal M$ in the space
   $H:=H^{-\beta}(\Omega)$ which is a compact set in $\Bbb D_\alpha$.
\end{theorem}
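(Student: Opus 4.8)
The plan is to run, mutatis mutandis, the same three–stage argument used for Theorem~\ref{Th9.attr} and for the exponential–attractor theorem of Section~\ref{s8}. First I would rewrite \eqref{9.CH} in the form $B\Dt u+a(-\Dx)^\alpha u+f(u)=g$ with $B:=(-\Dx)^{-\beta}$ positive and self-adjoint, so that $(B\Dt u,u)=\tfrac12\tfrac d{dt}\|u\|^2_{H^{-\beta}}$ and $H:=H^{-\beta}(\Omega)$ is the natural energy space, while $\Bbb D_\alpha$ from \eqref{9.Dal} is the strong-solution space. Pairing with $u$, using $f(u).u\ge-C$ and the lower bound for $((-\Dx)^\alpha u,f(u))$ coming from the Caffarelli-type bilinear representation of \cite{Caf16} together with $f'(u)\ge-K$, gives dissipativity in $H^{-\beta}$ and integral control of the $H^\alpha_{\Dx}$-norm; pairing the difference equation with the difference gives the global Lipschitz continuity of $\widehat S(t)$ in $H^{-\beta}$ exactly as in \eqref{1.lip}. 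As in Corollary~\ref{Cor1.d}, passage to $\Bbb D_\alpha$ is obtained by first controlling $\theta=\Dt u$ in $H^{-2\beta}$ (whence $(-\Dx)^{-\beta}\theta\in L^2$) and then reading $a(-\Dx)^\alpha u(t)+f(u(t))=g-(-\Dx)^{-\beta}\theta(t)$ as an elliptic problem with $L^2$ right-hand side, to which fractional elliptic regularity and the Caffarelli form apply; here \eqref{5.fprime} and \eqref{5.con-vex} are used to keep $\|f'(u)\|_{L^2}$ under control. The existence and uniqueness of weak solutions and of the global attractor in $H^{-\beta}$, the weak-to-strong smoothing yielding an absorbing ball $B_R\subset\Bbb D_\alpha$, and the $\Bbb D_\alpha$-compactness of $\widehat S(1)B_R$ then transcribe from Sections~\ref{s5}--\ref{s7}, the last via the fractional analogue of the integration-by-parts identity of Lemma~\ref{Lem5.int}.

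With $B_R\subset\Bbb D_\alpha$ absorbing, I would fix $T>0$ with $\widehat S(T)B_R\subset B_R$, set $\widehat S:=\widehat S(T)$, and verify the hypotheses feeding Proposition~\ref{Prop6.a-exp}: Lipschitz continuity in $H^{-\beta}$ (already available), H\"older continuity in time \eqref{6.hol} (interpolate the $\Bbb D_\alpha$-bound on $u$ against the $H^{-2\beta}$-bound on $\Dt u$, giving an exponent $\gamma\in(0,1]$), and the smoothing property \eqref{6.sq} for a space $V\Subset H^{-\beta}$. For the last, the difference $\theta=u_1-u_2$ solves $B\Dt\theta+a(-\Dx)^\alpha\theta+l(t)\theta=0$ with $l(t):=\int_0^1f'(su_1(t)+(1-s)u_2(t))\,ds\ge-K$; pairing with $\theta$ (whose initial data now \emph{is} in $H^{-\beta}$, unlike that of $\Dt u$) gives $\|\theta(T)\|^2_{H^{-\beta}}+\int_0^T\|\theta\|^2_{H^\alpha_{\Dx}}\le Ce^{KT}\|\theta(0)\|^2_{H^{-\beta}}$, and \eqref{5.fprime}, \eqref{5.con-vex} and the uniform $L^2$-bound on $f(u_i(t))$ (valid since $u_i(t)\in\Bbb D_\alpha$) yield $\|l(t)\|_{L^2}\le C$ on $[0,T]$ for \emph{any} polynomial growth of $f$. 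H\"older's inequality and Sobolev embedding then bound $l(t)\theta(t)$ in a suitable $L^s(0,T;X)$ with $s>1$; inverting $B$ turns the difference equation into the fractional parabolic equation $\Dt\theta+a(-\Dx)^{\alpha+\beta}\theta=-(-\Dx)^\beta(l(t)\theta)$, and the analytic-semigroup smoothing (or maximal $L^s$-regularity) of $a(-\Dx)^{\alpha+\beta}$ upgrades $\theta(T)$ to some $H^{-\beta+\delta}(\Omega)$, $\delta>0$. Taking $V:=H^{-\beta+\delta}(\Omega)$, Proposition~\ref{Prop6.a-exp} produces a discrete exponential attractor $\Cal M_d\subset B_R$, and $\Cal M:=\cup_{t\in[T,2T]}\widehat S(t)\Cal M_d$ is the desired exponential attractor, compact in $\Bbb D_\alpha$ by the absorbing-ball property.

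The main obstacle is that the short multiplier tricks which make Proposition~\ref{Prop5.der} and the proof in Section~\ref{s8} transparent---testing with $\theta|\theta|^r$, or with $\Dt u$---do \emph{not} survive the passage to \eqref{9.CH}: the nonlocal weight $B=(-\Dx)^{-\beta}$ destroys both the perfect-time-derivative structure ($(B\Dt\theta,\theta|\theta|^r)$ is $\tfrac d{dt}$ of nothing usable) and the pointwise monotonicity cancellation ($f'(u)\ge-K$ pairs with $\theta$ but not with $(-\Dx)^{-\beta}\theta$). As the Remark after the theorem of Section~\ref{s8} anticipates for higher-order problems, one must therefore secure $u(t)\in\Bbb D_\alpha$ and the compactness estimates either through the fractional parabolic reformulation $\Dt u+a(-\Dx)^{\alpha+\beta}u=(-\Dx)^\beta(g-f(u))$ and its maximal $L^q$-regularity, or via the method of $l$-trajectories; once $B\Dt u\in L^2$ is known, the elliptic reconstruction of $\Bbb D_\alpha$ goes through as above. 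A second delicate point, inherited from the fractional setting, is the integration-by-parts identity needed for $\Bbb D_\alpha$-compactness: the cut-off/monotone-convergence argument of Lemma~\ref{Lem5.int} must be redone with $\varphi_n(|\Nx u|^2)\Nx u$ replaced by an admissible test function for the bilinear form of \cite{Caf16}. Carrying $B$ through all the estimates of Sections~\ref{s1.1}--\ref{s8} and settling these two points is where the bulk of the work lies; for the classical case $\alpha=\beta=1$ they are already treated in \cite{MZ}.
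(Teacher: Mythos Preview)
The paper does not give a proof of this theorem at all: after introducing \eqref{9.CH}, rewriting it as $\Dt(-\Dx)^{-\beta}u+a(-\Dx)^\alpha u+f(u)=g$, and remarking that one may ``utilize the monotonicity of the function $f$ and apply the developed above theory to this equation (see also \cite{MZ} for the case $\alpha=\beta=1$)'', the authors state the theorem and then write ``We leave the rigorous proof of this theorem to the reader.'' Your proposal is therefore not being compared against a written proof but against the paper's implicit program, and it matches that program closely: you carry the estimates of Sections~\ref{s1.1}--\ref{s8} through the weight $B=(-\Dx)^{-\beta}$, you correctly identify $H^{-\beta}$ and $\Bbb D_\alpha$ as the weak and strong phase spaces, and you feed the difference-of-solutions smoothing into Proposition~\ref{Prop6.a-exp}. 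You also correctly flag, in line with the Remark at the end of Section~\ref{s8}, that the nonlinear multipliers $\theta|\theta|^r$ used in Proposition~\ref{Prop5.der} and in Section~\ref{s8} break down for the higher-order operator and that one should fall back on maximal regularity or the $l$-trajectory method of \cite{MN96,MP02,Z1}.

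One small slip: in your first paragraph you invoke the Caffarelli--Stinga bilinear form to control $((-\Dx)^\alpha u,f(u))$ already at the stage of pairing with $u$; that pairing only produces $(a(-\Dx)^\alpha u,u)$ and $(f(u),u)$, neither of which needs the bilinear representation. The term $((-\Dx)^\alpha u,f(u))$ appears only when you pair with $(-\Dx)^\alpha u$ (the analogue of multiplying by $-\Dx u$ in Lemma~\ref{Lem1.h1}), and it is there that \cite{Caf16} is required. Apart from this, your sketch is sound and is precisely the argument the paper is asking the reader to supply.
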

We leave the rigorous proof of this theorem to the reader.

\subsection{Open problems} We conclude this section by a discussion of some open questions and
 possible further improvements of the above developed theory.
\par
{\bf Problem 1.} We start with the already posed question about the validity of the integration
 by parts formula
%$$
\begin{equation}
(f(u),\Dx u)=-(f'(u)\Nx u,\Nx u)
\end{equation}
%$$
for every $u\in \Bbb D$. We know that both parts of this equality are well-defined for any $u\in\Bbb D$.
 However, since we do not know the density of smooth functions in $\Bbb D$, we cannot verify the identity
  in a standard way, so we need to use something else. We have proved this identity under the extra
   assumption \eqref{5.fprime} which allows us to control the Lebesgue norm of $f'(u)\Nx u$ and
    simplifies the situation. Clarifying the situation with this integration by parts in general would be
    very useful for establishing energy equalities for many other equations containing monotone
     nonlinearities which, in turn, may give compactness of the corresponding global attractors.
     We were sure
      that \eqref{5.fprime} is technical, but surprisingly are unable to remove
      it (or find the  proper reference).
\par

{\bf Problem 2.} Next problem is related with smoothness of weak/strong
solutions of problem \eqref{1.main}. We have established that under the assumption \eqref{1.fp} that $f$ has
 a polynomial growth rate, the problem possesses an instantaneous smoothing $H$ to $\Bbb D$ smoothing
  property. It would be interesting to understand whether or not this polynomial growth restriction is
   really necessary for the smoothing (ideally, to construct a non-smoothing
    weak solution for problem \eqref{1.main}), say, with exponential or stronger nonlinearities.
     A natural idea here is to extend the proof of Theorem \ref{Th1.strange} to the case
      where $\Dt u$ belongs to some
     weaker spaces than $L^p(\Omega)$ with $p\ll1$ using the technique of Orlich spaces. But more
     detailed analysis shows that this does not work already when $\ln (1+|f(u)|)\in L^1$, so we may
     expect the existence of such exotic non-smoothing weak solutions for fast growing nonlinearities.
     \par
     The phenomenon of delayed regularization is well-known in the class of nonlinear
      diffusion problems, see \cite{Va} and reference therein. For example, the equation
      $$
      \Dt u|\Dt u|^p=\Dx u, \ u\big|_{\partial\Omega}=0, \ \ p\ge0
      $$
      is well-posed in a natural energy phase space $\Phi=W^{1,2}_0(\Omega)$. However, the
      solutions of this equation {\it do not possess} an instantaneous smoothing if, say, $p>4$ and $d=3$.
      Indeed, the energy identity for this equation reads
      $$
      \|\Nx u(T)\|_{L^2}^2+\int_0^T\|\Dt u(t)\|^{p+2}_{L^{p+2}}\,dt=\|\Nx u(0)\|^2_{L^2},
      $$
      so if $u(0)\notin L^{p+2}(\Omega)$, we  have $u(T)\notin L^{p+2}(\Omega)$ for any finite $T>0$.
      However, if we start from more regular phase space $\Psi:=W^{1,2}_0(\Omega)\cap L^\infty(\Omega)$,
      we will have instantaneous further regularization, see \cite{EZel}. The open question is
      whether or not something similar happens in the case of system \eqref{0.rds} of reaction-diffusion
       equations with
      fast growing nonlinearity $f$ satisfying \eqref{1.f}.
     \par
     Another related question is about generating singularities in finite time in equations
      like \eqref{1.main}. It is known that general reaction-diffusion systems may generate
       singularities in higher norms even if the natural energy norm  remains finite and dissipative, see
       e.g., \cite{P00} for RDS satisfying balance law (=action mass law), \cite{HV97} for the case of
        reaction-diffusion with chemotaxis or \cite{Bud} for Ginzburg-Landau equations
         in $\R^3$ (see also references therein). However, to the best of our knowledge, there are no
          such examples in the class of equation \eqref{1.main} with nonlinearities
          satisfying $f'(u)\ge-K$.
          As we know, in this case the $H^2$-norm cannot blow up, so this is the question of
          possible blow up of higher norms and high space dimension $d>4$.

\par
{\bf Problem 3.} Finally, about the finite-dimensionality of global attractors. The most popular
 scheme for proving this result is related with volume contraction technique,
 see \cite{BV,tem} and references therein. Using this technique, we need to estimate $l$ dimensional traces
 $\operatorname{Tr}_l\Cal L_u$, where
 $$
 \Cal L_uv=a\Dx v-f'(u)v
 $$
 is the linearized operator on the trajectory $u(t)$ of the equation \eqref{1.main}
  lying on the attractor. {\it Formal} estimates of this quantity depend only on $K$ (if the assumption
   $f'(u)\ge-K$ is posed) and are independent on the norm of $u(t)$ and any norms of $f(u)$.
   \par
   However, to justify this method we need to verify the {\it differentiability} of the semigroup
    $\hat S(T)$ with respect to the initial data (at least the so-called uniform quasi-differentiability
     on the attractor, see \cite{tem}) and such a differentiability usually {\it does not}
      hold in supercritical cases.
\par
This was the main reason to use the alternative scheme based on Proposition \ref{Prop6.a-exp} for verifying
 the finite-dimensionality. In this scheme the differentiability is not required, but as the price to pay,
 we get essentially worse estimates than expected since now the norm of $|f'(u)|$ is involved into all
  dimension estimates.
\par
It would be interesting to remove this drawback and remove the dependence on $|f'(u)|$ from these estimates,
 e.g., by finding a "clever" choice of spaces $H$ and $V$ in Proposition \ref{Prop6.a-exp}. Up to the
  moment we know how to do this in a scalar case only, due to the possibility to multiply \eqref{6.dif} by
  $\sgn v$ and using the Kato inequality. This in turn gives the estimate of the
   $L^1$-norm of $l(t)\theta$ through quantities depending only on $K$. To the best of our knowledge,
   nothing similar is known for the vector case.

\appendix
\section{Nonlinear localization and elliptic regularity}\label{A}

In this appendix we consider the following semi-linear elliptic problem:
%$$
\begin{equation}\label{A.e}
a\Dx u-f(u)=g,
\end{equation}
%$$
where the matrix $a$ satisfies assumption \eqref{1.a} and $f$ enjoys assumptions \eqref{1.f}. Then, arguing
as before, we get the $H^2$-elliptic regularity
%$$
\begin{equation}\label{A.2reg}
\|u\|_{H^2}+\|f(u)\|_{L^2}\le C\|g\|_{L^2}.
\end{equation}
%$$
The question addressed here concerns an additional  regularity under the extra assumption
%$$
\begin{equation}\label{A.gq}
g\in L^q(\Omega),\ \ q>2.
\end{equation}
%$$
A partial answer on this question is given in the following theorem.

\begin{theorem}\label{ThA.main} Let the above assumptions hold. Then there exists
 $\kappa=\kappa(a)>0$ such that
%$$
\begin{equation}\label{A.strest}
\||D^2_xu|\cdot|\Nx u|^{r/2}\|^2_{L^2}+\|\Nx u\|^{r+2}_{L^{\frac{d(r+2)}{d-2}}}\le C\|g\|_{L^q}^{r+2}
\end{equation}
%$$
if $d>2$, $q<d-\frac{d(d-2)}{\kappa+d}$ and $r=\frac{d(q-2)}{d-q}$. Here and below $D^2_xu$ stands
 for the collection of all
 second derivatives of the function $u$.
\end{theorem}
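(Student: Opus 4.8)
The plan is to prove \eqref{A.strest} first as an \emph{a priori} estimate for smooth solutions --- for instance for the classical solutions $u=u_n$ obtained by replacing $f$ by the regularised nonlinearities $f_n$ of Lemma \ref{Lem1.fn}, which keep $f_n'(u)\ge-K$ uniformly --- and then to pass to the limit $n\to\infty$ using the weak lower semicontinuity of the two terms in the left-hand side of \eqref{A.strest} (via a.e.\ convergence and Fatou's lemma). For such a smooth $u$ I would differentiate \eqref{A.e} in the variable $x_j$, obtaining $a\Dx(\partial_{x_j}u)-f'(u)\partial_{x_j}u=\partial_{x_j}g$, take the $\R^k$-scalar product of this identity with $\partial_{x_j}u$, multiply by the weight $(|\Nx u|^2+\delta)^{r/2}$, integrate over $\Omega$, sum over $j=1,\dots,d$, and let $\delta\to0^+$ at the end; the parameter $\delta>0$ is only needed to differentiate $|\Nx u|^r$ across the zero set of $\Nx u$ when $r<1$.

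After integrating by parts in $x_m$ in the term produced by $a\Dx(\partial_{x_j}u)$, the principal part is $-(aD^2_xu,D^2_xu\,|\Nx u|^r)\le-\alpha\||D^2_xu|\,|\Nx u|^{r/2}\|^2_{L^2}$ by \eqref{1.a}, where $\alpha>0$ is the ellipticity constant of $a$; differentiating the weight $|\Nx u|^r$ creates an extra ``commutator'' term bounded by $Cr\,\||D^2_xu|\,|\Nx u|^{r/2}\|^2_{L^2}$, and \emph{this} is the only place where smallness of $r$ is used: choosing $r<\kappa(a)$, with $\kappa(a)$ determined by the ellipticity/boundedness constants of $a$, the sum of the two is still negative, say $\le-\tfrac\alpha2\||D^2_xu|\,|\Nx u|^{r/2}\|^2_{L^2}$. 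The monotonicity $f'(u)\ge-K$ from \eqref{1.f} gives $-(f'(u)\Nx u,\Nx u\,|\Nx u|^r)\le K\|\Nx u\|^{r+2}_{L^{r+2}}$, and on the right-hand side, since only $g\in L^q$ is available, I would integrate by parts once more to move the $x_j$-derivative off $g$, producing terms dominated by $C\int_\Omega|g|\,|D^2_xu|\,|\Nx u|^r$, which Young's inequality splits as $\eb\||D^2_xu|\,|\Nx u|^{r/2}\|^2_{L^2}+C_\eb\int_\Omega|g|^2|\Nx u|^r$. Absorbing the $\eb$-terms then yields the basic inequality
\begin{equation}\label{plan.basic}
\||D^2_xu|\,|\Nx u|^{r/2}\|^2_{L^2}\le C\|\Nx u\|^{r+2}_{L^{r+2}}+C\int_\Omega|g|^2|\Nx u|^r .
\end{equation}

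To close the argument I would combine three elementary facts. First, the pointwise bound $|\Nx(|\Nx u|^{(r+2)/2})|\le C\,|D^2_xu|\,|\Nx u|^{r/2}$ together with the Sobolev embedding $H^1(\Omega)\subset L^{2d/(d-2)}(\Omega)$ controls $Y:=\|\Nx u\|^{r+2}_{L^{\frac{d(r+2)}{d-2}}}=\||\Nx u|^{(r+2)/2}\|^2_{L^{2d/(d-2)}}$ by $C\big(\||D^2_xu|\,|\Nx u|^{r/2}\|^2_{L^2}+\|\Nx u\|^{r+2}_{L^{r+2}}\big)$. Second, the choice $r=\frac{d(q-2)}{d-q}$ is exactly the one for which $\frac{rq}{q-2}=\frac{d(r+2)}{d-2}$, so H\"older's inequality with exponents $\frac q2$ and $\frac q{q-2}$ gives $\int_\Omega|g|^2|\Nx u|^r\le\|g\|^2_{L^q}\,Y^{r/(r+2)}$. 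Third, interpolating $\|\Nx u\|_{L^{r+2}}$ between $\|\Nx u\|_{L^2}$ (which is $\le C\|g\|_{L^q}$ by the $H^2$-estimate \eqref{A.2reg} and $L^q(\Omega)\subset L^2(\Omega)$) and $\|\Nx u\|_{L^{\frac{d(r+2)}{d-2}}}$ gives $\|\Nx u\|^{r+2}_{L^{r+2}}\le C\|g\|^{(1-\theta)(r+2)}_{L^q}\,Y^{\theta}$ with $\theta=\frac{dr}{dr+4}\in(0,1)$. Substituting these into \eqref{plan.basic} and into the estimate for $Y$ and applying Young's inequality term by term (all powers of $Y$ that occur are $<1$), one obtains $Y\le\tfrac12Y+C\|g\|^{r+2}_{L^q}$, hence $Y\le C\|g\|^{r+2}_{L^q}$ and, back in \eqref{plan.basic}, $\||D^2_xu|\,|\Nx u|^{r/2}\|^2_{L^2}\le C\|g\|^{r+2}_{L^q}$; this is \eqref{A.strest}, and the stated range $q<d-\frac{d(d-2)}{\kappa+d}$ is merely the rewriting of $0<r<\kappa(a)$ through $r=\frac{d(q-2)}{d-q}$. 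The main obstacle I expect is not the algebra but the rigorous justification of the formal multiplication: beyond the approximation/limit passage, the integrations by parts generate boundary contributions on $\partial\Omega$ (where $\partial_{x_j}u$ need not vanish), which must be handled by splitting into an interior estimate with a cut-off $\phi$ --- the extra $\Nx\phi$ terms being of lower order --- together with the standard boundary elliptic regularity near $\partial\Omega$, in which the growth of $f$ plays no role.
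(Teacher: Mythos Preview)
Your interior estimate and the closing algebra are correct and essentially coincide with the paper's argument (the paper uses a slightly different three-way H\"older splitting of the $g$-term, applying it directly to $|g|\,|D^2_xu|\,|\Nx u|^{r/2}\,|\Nx u|^{r/2}$, but the outcome is the same). The genuine gap is in your last sentence: the boundary contributions cannot be handled by ``standard boundary elliptic regularity near $\partial\Omega$, in which the growth of $f$ plays no role.'' The nonlinearity is supercritical, so $f(u)$ is only known to lie in $L^2$ via \eqref{A.2reg} and cannot be fed as given data into a linear $W^{2,q}$ (or weighted) boundary estimate; any such attempt loses precisely the extra $|\Nx u|^r$ weight you are trying to gain.

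What the paper does instead is a \emph{nonlinear localization}: near $\partial\Omega$ one introduces an orthonormal frame $(\tau_1,\dots,\tau_{d-1},n)$ of tangential and normal directions and repeats the same weighted multiplication, but testing with $\partial_{\tau_i}^*(\partial_{\tau_i}u\,|\Nx u|^r)$ for the tangential part and with $\partial_n^*(\partial_n u\,|\Nx u|^r)$ for the normal part. The tangential multiplier is admissible because $\partial_{\tau_i}u\big|_{\partial\Omega}=0$, and --- crucially --- it preserves the sign $(f'(u)\partial_{\tau_i}u,\partial_{\tau_i}u\,|\Nx u|^r)\ge0$ (after shifting so that $f'\ge0$); for the normal part one uses $f(0)=0$ to kill the boundary term, and the remaining second normal derivative is then controlled by the already-obtained tangential estimate. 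Only after combining the interior cut-off estimate with these tangential and normal boundary estimates does one obtain \eqref{A.strest}. So the nonlinearity \emph{does} matter near the boundary: it is the monotonicity $f'(u)\ge-K$, not linear regularity theory, that makes the boundary step go through.
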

\begin{proof}We give below only the formal derivation of estimate \eqref{A.strest} which can be justified
 in a standard way (e.g., by cutting off the nonlinearity as explained in section \ref{s.s}, mollifying
  $g$   and using
 the corresponding smooth  solutions of the cut off equation to approximate the initial solution $u$).
 \par
 {\it Step 1.} We start with the simplest case of periodic boundary conditions where no difficult terms
 related with the boundary arise and we may integrate by parts freely. Without loss of generality,
 we may also
  assume that $f'(u)\ge0$ and $f(0)=0$.
\par
  Let us multiply equation
  \eqref{A.e} by $\partial_{x_i}(\partial_{x_i}u|\Nx u|^r)$ and integrate over $x$. This gives
  %$$
  \begin{multline}\label{A.del}
(a\Dx u,\partial_{x_i}^2u|\Nx u|^r)+(f'(u)\partial_{x_i}u,\partial_{x_i}u|\Nx u|^r)\le\\\le
(g,\partial_{x_i}(\partial_{x_i}u|\Nx u|^r))+Cr(|D^2_xu|^2,|\Nx u|^r).
  \end{multline}
  %$$
Integrating by parts twice in the first term in the left-hand side and using the positivity of $a$ and $f'$,
we arrive at
%$$
\begin{equation}\label{A.divi}
\alpha(|\Nx (\partial_{x_i}u)|^2,|\Nx u|^r)
\le
(g,\partial_{x_i}(\partial_{x_i}u|\Nx u|^r))+Cr(|D^2_xu|^2,|\Nx u|^r).
\end{equation}
%$$
Due to Sobolev embedding theorem, we have
%$$
\begin{equation}\label{A.Sol}
\|\Nx u\|_{L^{\frac{d(r+2)}{d-2}}}^{r+2}\le C\|\Nx(|\Nx u|^{\frac{r+2}2})\|^2_{L^2}\le
 C'(|D^2_x u|^2,|\Nx u|^r).
\end{equation}
%$$

Taking now a sum in \eqref{A.divi} with respect to $i=1,\cdots, d$ and assuming that
 $r\le \kappa=\kappa(\alpha)$
is small enough, we end up with
%$$
\begin{multline}
\alpha\((|D^2_x u|^2,|\Nx u|^r)+\|\Nx u\|_{L^{\frac{d(r+2)}{d-2}}}^{r+2}\)\le
\\\le C|(g,|D^2_x u||\Nx u|^{r/2}|\Nx u|^{r/2})|.
\end{multline}
%$$
Finally, using the H\"older inequality with the exponents $q$, $2$ and $\frac{2d(r+2)}{r(d-2)}$, we get
%$$
\begin{multline}
|(g,|D^2_x u|^2|\Nx u|^{r/2}|\Nx u|^{r/2})|\le\\\le \frac\alpha2\((|D^2_x u|^2,|\Nx u|^r)+
\|\Nx u\|_{L^{\frac{d(r+2)}{d-2}}}^{r+2}\)+C_\alpha\|g\|^{r+2}_{L^q}
\end{multline}
%$$
and finish the proof of the theorem in the case of periodic BC.
\par
The case of Dirichlet BC is more delicate since hardly controllable boundary terms will appear if
 we try to integrate by parts in the first term of \eqref{A.del}. To avoid them, we will use the nonlinear
 localization technique suggested in \cite{KZ09} (see also \cite{KZ12}) for more details.
 \par
 {\it Step 2. Interior estimates.} We introduce a cut-off function $\theta\in C^2(\R^d)$ which vanishes
 near the boundary and equals to one in the $\delta$-interior of the domain $\Omega$ ($\delta\ll1$
 is small positive) and satisfies
 $$
 |\Nx\theta(x)|\le C\theta(x)^{1/2},
 $$
such a function exists and at least $C^1$-smooth since the domain is smooth. Then, multiplying equation
 \eqref{A.e} by $\Nx\cdot(\theta\Nx u|\Nx u|^r)$ and arguing as in Step 1, we arrive at
 %$$
 \begin{equation}\label{A.int}
(\theta|D^2_x u|^2,|\Nx u|^r)\le C_\delta\(\|g\|^{r+2}_{L^q}+\|\Nx u\|^{r+2}_{L^{r+2}}\).
 \end{equation}
 %$$
 Indeed, integration by parts is now allowed due to the factor $\theta$ vanishing near the boundary.
 Of course this will produce the extra terms containing $\Nx\theta$ at every step, but these terms
 are all under
  the control due to obvious estimate:
  %$$
\begin{multline}\label{A.LOT}
|(|D^2_x u||\Nx u|,|\Nx\theta|,|\Nx u|^r)|\le \eb(\theta|D^2_x u|^2,|\Nx u|^r)+\\+C_\eb(|\Nx\theta|^2\theta^{-1},|\Nx u|^{r+2})\le
\eb(\theta|D^2_x u|^2,|\Nx u|^r)+C_\eb\|\Nx u\|^{r+2}_{L^{r+2}},
\end{multline}
%$$
where $\eb>0$ is arbitrary.
\par
{\it Step 3. Boundary estimates: tangential derivatives.} To treat the neighbourhood of the boundary,
 we introduce
 an $x$-depending smooth orthonormal base $(\tau_1(x),\cdots,\tau_{d-1}(x),n(x))$ near the boundary
  such that, when
 $x\in\partial\Omega$, $\tau_i$ correspond to {\it tangential} directions and $n(x)$ is outer normal. We also
  assume that these vector fields are cut-off outside of small neighbourhood of the boundary
   similarly to Step 2. Note also that in general such smooth vector fields exist near the
    boundary only locally, but we will ignore this fact assuming that they exist globally
     (one more localization is necessary in general). We also define the corresponding directional
 derivatives
 $$
 \partial_{\tau_i}:=\tau_i.\Nx,\ \ \partial_n:=n.\Nx.
 $$
 Then, as follows from the orthogonality conditions,
 $$
 |\Nx u|=|\nabla_{(\tau,n)}u|,\ \ [\partial_{\tau_i},\partial_{\tau_j}]=L.O.T.,\
  \ \partial_{\tau_i}^*=-\partial_{\tau_i}+L.O.T.
 $$
 where "L.O.T." means "lower order terms" and also we have analogous commutator formulas
  for normal derivative as well. In addition,
  %$$
\begin{multline}
\Dx u=\partial_{n}^2u+\sum_{i=1}^{d-1}\partial_{\tau_i}^2u+L.O.T.,\\ \Nx\cdot(\Nx u|\Nx u|^r)=
\nabla_{(\tau,n)}\cdot(\nabla_{(\tau,n)}u|\nabla_{(\tau,n)}|^r)+L.O.T.
\end{multline}
  %$$
Since all "L.O.T." are under the control analogously to estimate \eqref{A.LOT}, we may replace
$x$-derivatives by $(\tau,n)$-derivatives and do calculations simply assuming that they  commute.
At this step we multiply equation \eqref{A.e} by
$-\partial_{\tau_i}^*(\partial_{\tau_i}u|\nabla_{(\tau,n)}u|^r)$ and integrate over $x$. Analogously to Step 1, this gives
%$$
\begin{multline}\label{A.tan}
(a\partial_{n}^2u+a\sum_{j=1}^{d-1}\partial_{\tau_j}^2 u,\partial_{\tau_i}^2u|\nabla_{(\tau,n)}u|^r)
+(f'(u)\partial_{\tau_i}u,\partial_{\tau_i}u|\Nx u|^r)
\le\\\le
\eb(|D^2_xu|^2,|\Nx u|^r)+C_\eb\(\|g\|^{r+2}_{L^q}+\|\Nx u\|^{r+2}_{L^{r+2}}\).
\end{multline}
%$$
The advantage of separating tangential and normal derivatives is the fact that
$$
\partial_{\tau_i}u\big|_{\partial\Omega}=\partial_{\tau_i}^2u\big|_{\partial\Omega}=0
$$
due to the Dirichlet boundary conditions, so we again may integrate by parts the expression in the left-hand side
 of \eqref{A.tan} freely and, analogously to Step 1, get the following estimate
  (using also that $f'(u)\ge0$):
%$$
\begin{multline}\label{A.good-tan}
(|D^2_{\tau,\tau}u|^2+|D^2_{n,\tau} u|^2,|\Nx u|^r)\le\\\le \eb(|D^2_xu|^2,|\Nx u|^r)+
C_\eb\(\|g\|^{r+2}_{L^q}+\|\Nx u\|^{r+2}_{L^{r+2}}\).
\end{multline}
%$$
{\it Step 4. Boundary estimates: normal derivatives.} To estimate normal derivatives we multiply
\eqref{A.e} by $\partial_n^*(\partial_n u|\Nx u|^r)$ and use that $f(0)=0$ in order to kill boundary terms
 related with nonlinearity. Together with the already obtained estimate \eqref{A.good-tan} this gives
 %$$
\begin{multline}\label{A.norm}
(\partial_n^2 u,\partial_n^2 u|\Nx u|^r)\le \eb(|D^2_xu|^2,|\Nx u|^r)+
C_\eb\(\|g\|^{r+2}_{L^q}+\|\Nx u\|^{r+2}_{L^{r+2}}\)\\+|(|D^2_{\tau,\tau}u|,|\partial_n^2 u||\Nx u|^r)|\le
\eb(|D^2_xu|^2,|\Nx u|^r)+
C_\eb\(\|g\|^{r+2}_{L^q}+\|\Nx u\|^{r+2}_{L^{r+2}}\),
\end{multline}
%$$
where the constant $\eb>0$ can be chosen arbitrarily small.
\par
{\it Step 5. Combining all together.} Combining the interior estimates obtained at Step 2 with the tangential
 and normal boundary estimates \eqref{A.good-tan} and \eqref{A.norm} (e.g., with the help of the proper
  partition of unity), we finally arrive at
 $$
(|D^2_x u|^2,|\Nx u|^r)\le C_\eb(\|g\|^{r+2}_{L^{r+2}}+\|\Nx u\|^{r+2}_{L^{r+2}})+C\eb(|D^2_xu|,|\Nx u|^r)
$$
and after fixing $\eb>0$ small enough, we end up with
$$
(|D^2_x u|^2,|\Nx u|^r)\le C(\|g\|^{r+2}_{L^{r+2}}+\|\Nx u\|^{r+2}_{L^{r+2}}).
$$
So, it only remains to estimate the $L^{r+2}$-norm of the gradient $\Nx u$. To this end, it is enough to use
 \eqref{A.Sol} together with the obvious estimate
 $$
 \|\Nx u\|^{r+2}_{L^{r+2}}\le \eb\|\Nx u\|^{r+2}_{L^{\frac{d(r+2)}{d-2}}}+C_\eb\|\Nx u\|^{r+2}_{L^2}
 $$
 and estimate \eqref{A.2reg}. This completes the proof of the theorem.
\end{proof}
\begin{remark} As we see, the nonlinear localization uses the general strategy of the classical
 (linear) localization technique. However, it is more delicate since we need also to treat the
  nonlinear term $f(u)$ which is now {\it not subordinated} to the linear ones, so we can multiply the
   equation only on the terms which can be estimated using the monotonicity assumption $f'(u)\ge0$.
    Fortunately, the amount of such multipliers is enough to get the estimates similar to the linear case.
\end{remark}

\end{document}